\newtheorem{theorem}{Theorem}[section]
\newtheorem{lemma}[theorem]{Lemma}
\newtheorem{proposition}[theorem]{Proposition}
\newtheorem{corollary}[theorem]{Corollary}
\newtheorem{example}[theorem]{Example}
\begin{document}

\title[a maximal immediate extension of finite rank]{Valuation domains with a maximal immediate extension of finite rank.}
\author{Fran\c{c}ois Couchot}
\address{Laboratoire de Math\'ematiques Nicolas Oresme, CNRS UMR
  6139,
D\'epartement de math\'ematiques et m\'ecanique,
14032 Caen cedex, France}
\email{couchot@math.unicaen.fr}

\keywords{torsion-free module, valuation domain, strongly flat module, content module}

\subjclass[2000]{Primary 13F30, 13C11}

\begin{abstract}
If $R$ is a valuation domain of maximal ideal $P$ with a maximal immediate extension of finite rank it is proven that there exists a finite sequence of prime ideals $P=L_0\supset L_1\supset\dots\supset L_m\supseteq 0$ such that $R_{L_j}/L_{j+1}$ is almost maximal for each $j$, $0\leq j\leq m-1$ and $R_{L_m}$ is maximal if $L_m\ne 0$. Then we suppose that there is an integer $n\geq 1$ such that each torsion-free $R$-module of finite rank is a direct sum of modules of rank at most $n$. By adapting Lady's methods, it is shown that $n\leq 3$ if $R$ is almost maximal, and  the converse holds if $R$ has a maximal immediate extension of rank $\leq 2$.
\end{abstract}

\maketitle


Let $R$ be a valuation domain of maximal ideal $P$, $\widehat{R}$ a maximal immediate extension of $R$, $\widetilde{R}$ the completion of $R$ in the $R$-topology, and $Q$, $\widehat{Q}$, $\widetilde{Q}$ their respective fields of quotients. If $L$ is a prime ideal of $R$, as in \cite{FaZa86}, we define the \textbf{total defect} at $L$, $d_R(L)$, the \textbf{completion defect} at $L$, $c_R(L)$, as the rank of the torsion-free $R/L$-module $\widehat{(R/L)}$ and the rank of the torsion-free $R/L$-module $\widetilde{(R/L)}$, respectively. Recall that a local ring $R$ is \textbf{Henselian} if each indecomposable module-finite $R$-algebra is local and a valuation domain is \textbf{strongly discrete} if it has no non-zero idempotent prime ideal. The aim of this paper is to study  valuation domains $R$ for which $d_R(0)<\infty$. The first example of a such valuation domain was given by Nagata \cite{Nag62}; it is a Henselian rank-one discrete valuation domain of characteristic $p>0$ for which $d_R(0)=p$. By using a generalization of Nagata's idea, Facchini and Zanardo gave other examples of characteristic $p>0$, which are Henselian and strongly discrete. More precisely:
\begin{example}
\label{E:1}\cite[Example 6]{FaZa86}
For each prime integer $p$ and for each finite sequence of integers $\ell(0)=1,\ \ell(1),\dots,\ell(m)$ there exists a strongly discrete valuation domain $R$ with prime ideals $P=L_0\supset L_1\ \supset\dots\supset L_m=0$ such that $c_R(L_i)=p^{\ell(i)},$ $\forall i,\ 1\leq i\leq m$.
\end{example}
So, $d_R(0)=p^{(\sum_{i=1}^{i=m}\ell(i))}$ by \cite[Corollary 4]{FaZa86}.
\begin{theorem}
\label{E:2}\cite[Theorem 8]{FaZa86}
Let $\alpha$ be an ordinal number, $\ell:\alpha+1\rightarrow \mathbb{N}\cup\{\infty\}$ a mapping with $\ell(0)=1$ and $p$ a prime integer. Then there exists a strongly discrete valuation domain $R$ and an antiisomorphism $\alpha+1\rightarrow\mathrm{Spec}(R)$, $\lambda\mapsto L_{\lambda},$ such that $c_R(L_{\lambda})=p^{\ell(\lambda)}$, $\forall\lambda\leq\alpha$.
\end{theorem}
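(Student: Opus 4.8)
The plan is to realize $R$ as the valuation ring of a carefully chosen subfield $Q$ of a Hahn (generalized power series) field $K=k((t^{\Gamma}))$ over a base field $k$ of characteristic $p$, iterating Nagata's original device at each step of the spectrum. The first task is the value group. I would take $\Gamma$ to be a lexicographically ordered Hahn product of copies of $\mathbb{Z}$ indexed by $\alpha$, arranged so that its convex subgroups form a chain $\{0\}=H_{0}\subset H_{1}\subset\dots\subset H_{\alpha}=\Gamma$ of order type $\alpha+1$ in which every successor jump $H_{\lambda+1}/H_{\lambda}$ is isomorphic to $\mathbb{Z}$ and every limit subgroup is the union of its predecessors. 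By the order-reversing correspondence between convex subgroups of $\Gamma$ and prime ideals of a valuation domain with value group $\Gamma$, this yields the required antiisomorphism $\lambda\mapsto L_{\lambda}$ of $\alpha+1$ onto $\mathrm{Spec}(R)$ with $L_{0}=P$ and $L_{\alpha}=0$; discreteness of each successor jump is precisely what guarantees that no nonzero prime is idempotent, i.e.\ that $R$ is strongly discrete.

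With $\Gamma$ in hand, the Hahn ring $V=k[[t^{\Gamma_{\ge 0}}]]$ of series with well-ordered support in $\Gamma_{\ge 0}$ is a maximal (linearly compact) valuation domain of value group $\Gamma$ and residue field $k$. The heart of the matter is to choose $R\subseteq V$ having the same value group and residue field --- so that $V$ is a maximal immediate extension of $R$ and computes its defects --- while prescribing the completion defect at every prime. For this I would use the characteristic-$p$ Frobenius: if $K^{p}\subseteq Q$ then each element of $K$ is purely inseparable of degree dividing $p$ over $Q$, and throwing the $p$-th power of a transcendental Hahn series $\theta$ into $Q$ while leaving $\theta$ itself out contributes exactly one inseparable generator of degree $p$ to the completion. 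I would therefore build $Q$ by transfinite recursion: at level $\lambda$ adjoin the $p$-th powers of $\ell(\lambda)$ mutually $p$-independent series $\theta_{\lambda,1},\dots,\theta_{\lambda,\ell(\lambda)}$ whose supports are confined to the Archimedean layer of the jump attached to $L_{\lambda}$, taking $Q$ to contain $K^{p}$ together with all these $p$-th powers. The case $\ell(\lambda)=\infty$ is handled by adjoining infinitely many such independent series at that level.

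To read off the defects I would pass to the quotients $R/L_{\lambda}$, which are valuation domains of value group $H_{\lambda}$, and compute $\widetilde{(R/L_{\lambda})}$ through Kaplansky's theory of pseudo-convergent sequences: the partial sums of each $\theta_{\lambda,j}$ form a pseudo-Cauchy sequence from $R/L_{\lambda}$ whose pseudo-limit lies in the completion but not in $R/L_{\lambda}$, so these series are genuine new generators; conversely pure inseparability of the fraction-field extension bounds the rank of $\widetilde{(R/L_{\lambda})}$ over $R/L_{\lambda}$ above by $p^{\ell(\lambda)}$, and $p$-independence of the chosen series supplies the matching lower bound, giving $c_R(L_{\lambda})=p^{\ell(\lambda)}$. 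The multiplicativity of completion defects along the chain from \cite[Corollary 4]{FaZa86} --- the identity underlying $d_R(0)=p^{\sum_i \ell(i)}$ quoted above --- then serves as a global consistency check on the bookkeeping.

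The main obstacle is exactly this localization: the completion defect $c_R(L_{\lambda})$ must depend only on the single jump attached to $L_{\lambda}$ and not accumulate the contributions of the deeper jumps already present in the value group $H_{\lambda}$ of $R/L_{\lambda}$. Achieving this forces one to confine the support of each $\theta_{\lambda,j}$ to the correct Archimedean layer and to verify, via the $R$-topology on $R/L_{\lambda}$, that series attached to other levels neither become new limits nor are absorbed, so that the per-level degrees stay exactly $p^{\ell(\lambda)}$ and multiply correctly. This demands a base field $k$ that is imperfect with an infinite supply of $p$-independent elements, together with a delicate transcendence-and-support recursion; the limit ordinals and the infinite-defect levels $\ell(\lambda)=\infty$ are the most delicate points to control.
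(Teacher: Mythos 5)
First, a point of reference: the paper never proves this statement --- it is quoted verbatim from \cite[Theorem 8]{FaZa86} --- so your proposal has to stand on its own as a reconstruction of Facchini and Zanardo's construction. In outline you have identified the right circle of ideas (characteristic $p$, an imperfect coefficient field with infinitely many $p$-independent elements, rings of generalized power series, Frobenius/purely inseparable adjunctions controlling immediate extensions); this is indeed how Nagata's example is generalized. However, there are genuine gaps. The first is the value group: a lexicographically ordered Hahn product of copies of $\mathbb{Z}$ indexed by $\alpha$ does \emph{not} have the properties you assert. Its convex subgroups are exactly the final-segment subgroups $F_{\gamma}=\{x:\mathrm{supp}(x)\subseteq[\gamma,\alpha)\}$, $\gamma\leq\alpha$, which form a chain of order type $(\alpha+1)^{*}$ under inclusion; consequently $\lambda\mapsto L_{\lambda}$ comes out as an order \emph{isomorphism} $\alpha+1\rightarrow\mathrm{Spec}(R)$ (the maximal ideal sits at index $\alpha$, not $0$), and, worse, for a limit ordinal $\gamma$ the quotient $\Gamma/F_{\gamma}$ has no least positive element, so the corresponding nonzero prime is idempotent and $R$ is \emph{not} strongly discrete. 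For finite $\alpha$ this is harmless since $(\alpha+1)^{*}\cong\alpha+1$, but the whole point of this theorem beyond Example~\ref{E:1} is infinite $\alpha$. What you need is $\Gamma=\bigoplus_{\gamma<\alpha}\mathbb{Z}$ compared at the \emph{largest} nonzero coordinate; note that finite supports are forced by your own requirement that limit convex subgroups be unions of their predecessors, a requirement a full Hahn product violates.

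The second gap is the decisive one: the assertion that $c_R(L_{\lambda})$ equals exactly $p^{\ell(\lambda)}$ at every level simultaneously is precisely the step you label ``the main obstacle'' and then leave undone. Saying that supports must be confined to the right layer and that one must ``verify\dots that series attached to other levels neither become new limits nor are absorbed'' states what has to be proved rather than proving it: the upper bound $c_R(L_{\lambda})\leq p^{\ell(\lambda)}$ requires showing that no combination of the generators attached to the \emph{other} levels becomes a Cauchy limit in the ideal topology of $R/L_{\lambda}$, and this bookkeeping is where essentially all of the work in \cite{FaZa86} lies; pure inseparability alone only gives that every intermediate degree is a power of $p$, not which power. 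Moreover, at a limit ordinal $\lambda$ your recipe does not even parse: $R/L_{\lambda}$ has value group $H_{\lambda}=\bigcup_{\mu<\lambda}H_{\mu}$, which has no top Archimedean class, so there is no ``Archimedean layer of the jump attached to $L_{\lambda}$'' in which to confine supports; a Cauchy net there must have support cofinal in $H_{\lambda}$, spread over infinitely many layers, and you give no construction or argument for that case (nor for $\ell(\lambda)=\infty$, nor for the endpoint $\lambda=0$, where $R/L_{0}=R/P$ is a field and hence $c_R(L_{0})=1$ no matter what you adjoin, so the normalization $\ell(0)=1$ and the indexing convention of \cite{FaZa86} must be confronted, not ignored). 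As it stands, the proposal correctly identifies the known strategy but replaces its essential verification by an acknowledgement that the verification is delicate.
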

 So, if $\ell(\lambda)=0$, $\forall \lambda\leq \alpha$, except for a finite subset, then $d_R(0)<\infty$ by \cite[Corollary 4]{FaZa86}.
 
In 1990, V\'amos gave a complete characterization of non-Henselian valuation domains with a finite total defect, and  examples of such rings. His results are summarized in the following: 
\begin{theorem}
\label{T:vam}\cite[Theorem 5]{Vam90}
 Let $R$ be a non-Henselian valuation domain and assume that $d_R(0)<\infty$. Then one of the following holds:
\begin{itemize}
\item[{\rm (rc)}]  $d_R(0)=2$, $R$ has characteristic zero, $\widehat{Q}$ is algebraically closed and its cardinality $\vert\widehat{Q}\vert^{\aleph_0}=\vert\widehat{Q}\vert$. Further, $Q$ is real-closed, the valuation on $Q$ has exactly two extensions to $\widehat{Q}$ and $R$ is almost maximal.
\item[{\rm (y)}] There is a non-zero prime ideal $L$ of $R$ such that $R_L$ is a maximal valuation ring, and $R/P$ and its field of quotients satisfy {\rm (rc)}.
\end{itemize} 
\end{theorem}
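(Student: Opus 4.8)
The plan is to reconstruct V\'amos's theorem from the valuation-theoretic meaning of the total defect. First I would translate the hypothesis into field theory. Since $\widehat{R}$ is torsion-free over $R$ and $d_R(0)$ is by definition its rank, $d_R(0)=\dim_Q(Q\otimes_R\widehat{R})=[\widehat{Q}:Q]$: indeed $Q\otimes_R\widehat{R}$ is a domain of finite $Q$-dimension containing $\widehat{R}$, hence a field, hence equal to $\widehat{Q}$. So $d_R(0)<\infty$ forces $\widehat{Q}/Q$ to be a \emph{finite algebraic} extension. Being immediate, $\widehat{Q}$ has the same value group $\Gamma$ and residue field $k$ as $Q$; being a maximal valued field it is Henselian, and even algebraically maximal, i.e. it admits no proper immediate algebraic extension. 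Finally, non-Henselianity rules out $\widehat{R}=R$, so $d_R(0)\geq 2$.

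Next I would split the two cases by locating where the defect is produced. Using the multiplicativity of defects along $\mathrm{Spec}\,R$ underlying \cite[Corollary 4]{FaZa86} (so that $d_R(0)=\prod_\lambda c_R(L_\lambda)$), only finitely many primes carry a completion defect $>1$. In case (y) I would take $L$ to be the prime separating the locus where the defect comes from completion (below $L$, so that $R_L$ is maximal) from the top layer $R/L$ that carries the genuine splitting. Passing to $R/L$ then yields a non-Henselian valuation domain with the same kind of finite defect but now \emph{almost maximal}, which reduces (y) to (rc). So the content is entirely in case (rc).

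The heart is case (rc), the almost maximal situation, and here I would aim to prove that $\widehat{Q}$ is in fact \emph{algebraically closed}. Granting this, $\bar{Q}=\widehat{Q}$ has finite degree $d_R(0)\geq 2$ over $Q$, so the absolute Galois group of $Q$ is finite and nontrivial; by the Artin--Schreier theorem this forces $[\widehat{Q}:Q]=2$, $\mathrm{char}\,Q=0$, $\widehat{Q}=Q(\sqrt{-1})$ and $Q$ real closed, giving $d_R(0)=2$. The residue field $k$ (that of $\widehat{Q}$) is then algebraically closed and $\Gamma$ is divisible; since $-1$ is a square in $k$, the polynomial $X^2+1$ already splits in the residue field, so $v$ has exactly two immediate extensions to $\widehat{Q}$, matching the ``two extensions'' clause. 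Almost maximality should be read off from the fact that the defect is concentrated at $P$ and $\widehat{R}$ has rank $2$ over $R$, while the cardinality identity $\vert\widehat{Q}\vert^{\aleph_0}=\vert\widehat{Q}\vert$ comes from the construction of a maximal immediate extension as a union of pseudo-limits of pseudo-Cauchy sequences, whose number is controlled by $\vert\widehat{Q}\vert$.

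I expect the main obstacle to be exactly the step $\widehat{Q}=\bar{Q}$. A priori the maximal immediate extension is only algebraically maximal, so it could be a proper subfield of $\bar{Q}$, since every algebraic extension that raises the ramification $e$ or the residue degree $f$ is permitted. Proving algebraic closedness amounts to showing that no such ramified or residually nontrivial finite extension survives, i.e. that the entire degree $d_R(0)$ is consumed by the \emph{splitting} of $v$ rather than by ramification, residue growth, or a wild $p$-power (Henselian) defect. Ruling out the wild defect is precisely what ultimately forces $\mathrm{char}\,Q=0$, and disentangling the tame part from the splitting part — using non-Henselianity to supply the factor $2$ while maximality forbids any genuine immediate algebraic growth — is the delicate core of the argument.
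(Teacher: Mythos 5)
The paper does not actually prove this statement: it is quoted from V\'amos \cite{Vam90}, so your attempt has to be measured against what a complete proof requires. Your opening reduction is correct and is the right starting point: $Q\otimes_R\widehat{R}$ is a finite-dimensional $Q$-subalgebra of $\widehat{Q}$, hence a subfield containing $\widehat{R}$, hence equal to $\widehat{Q}$, so $d_R(0)=[\widehat{Q}:Q]$; moreover $\widehat{Q}$ is Henselian and non-Henselianity of $R$ forces $d_R(0)\geq 2$, and the Artin--Schreier endgame is indeed how the characteristic-zero/real-closed conclusions appear. But the step you yourself label ``the delicate core'' --- that $\widehat{Q}$ is algebraically closed --- is the whole theorem, and you leave it unproved; everything after ``Granting this'' is conditional. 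Worse, the route you sketch for closing it (accounting for ramification, residue growth and wild defect inside the degree $d_R(0)$) cannot work: $\widehat{Q}/Q$ is immediate, so there is no $e$, $f$ or defect to disentangle in that extension; the issue is whether $\overline{Q}$ properly contains $\widehat{Q}$, which such bookkeeping cannot see. The missing ingredient is the classical theorem of F.~K.~Schmidt (Kaplansky--Schilling): a field which is Henselian with respect to two \emph{independent} valuations is separably closed. One applies it to the normal closure $N$ of $\widehat{Q}/Q$: since $R$ is not Henselian, the Henselization $Q^h\subseteq\widehat{Q}$ is a proper immediate extension and $v$ has at least two extensions to $N$; by normality these are conjugate, hence all Henselian (a Henselian valuation extends uniquely, and Henselianly, to any finite extension), and they are pairwise incomparable since they lie over the same ring $R$ in an algebraic extension.

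This also exposes a second gap: your (rc)/(y) split presupposes the dichotomy instead of deriving it, and the order of steps matters, because ``$\widehat{Q}$ is algebraically closed'' is \emph{false} in case (y). Concretely, let $F$ be a real-closed field of type (rc) with valuation ring $V$, and put $Q=F((t))$ and $R=\{f\in F[[t]] : f(0)\in V\}$. Then $R$ is non-Henselian with $d_R(0)=2$ and $\widehat{Q}=F(\sqrt{-1})((t))$, which is not algebraically closed ($t$ has no square root). So one cannot first ``reduce to the almost maximal case'' by inspecting which primes carry completion defect and then prove algebraic closedness; in V\'amos's argument the dichotomy itself comes out of the valuation theory: either two of the conjugate Henselian extensions of $v$ to $N$ are independent, in which case Schmidt's theorem makes $N$ separably closed, Artin--Schreier applies, and one lands in case (rc); or all of them are pairwise dependent, and the nontrivial common coarsening restricts (bijectively on coarsenings, since value-group quotients are torsion in algebraic extensions) to a prime $0\neq L\subsetneq P$ of $R$, which is exactly the prime of case (y), with maximality of $R_L$ and type (rc) for $R/L$ then extracted from the coarsened and residue valued fields. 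Finally, the remaining clauses are also not established by your heuristics: ``exactly two extensions'' follows from the fundamental inequality $\sum_i e_if_i\leq[\widehat{Q}:Q]=2$ together with non-Henselianity (a Hensel-type residue argument is unavailable precisely because $Q$ is not Henselian), and almost maximality and the cardinality identity $\vert\widehat{Q}\vert^{\aleph_0}=\vert\widehat{Q}\vert$ require Kaplansky's theory of maximal (linearly compact) fields, not just the rank computation.
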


As V\'amos, if $R$ is a domain, we say that $fr(R)\leq n$ (respectively $fr^o(R)\leq n$) if every torsion-free module (respectively every submodule of a free module) of finite rank is a direct sum of modules of rank at most $n$. By \cite[Theorem 3]{Vam90} $fr(R)\geq d_R(0)$ if $R$ is a valuation domain. So, the study of valuation domains $R$ for which $d_R(0)<\infty$ is motivated by the problem of the characterization of valuation domains $R$ for which $fr(R)<\infty$. 

When $R$ is a valuation domain which is a $\mathbb{Q}$-algebra or not Henselian, then $fr(R)<\infty$ if and only if $fr(R)=d_R(0)\leq 2$ by \cite[Theorem 10]{Vam90}. Moreover, if $fr(R)=2$, either  $R$ is of type (rc)  and $fr^o(R)=1$ or $R$ is of type (y) and $fr^o(R)=2$. When $R$ is a rank-one discrete valuation domain, then $fr(R)<\infty$ if and only if $fr(R)=d_R(0)\leq 3$ by \cite[Theorem 8]{Zan92} and \cite[Theorem 2.6]{ArDu93}.

In this paper we complete V\`amos's results. In Section~\ref{S:MIE}, a description of valuation domains with a finite total defect is given by Theorem~\ref{T:main} and Proposition~\ref{P:factors}. In Section~\ref{S:torsion} we give some precisions on the structure of torsion-free $R$-modules of finite rank when $R$ satisfies a condition weaker than $d_R(0)<\infty$. In Section~\ref{S:bound} we extend to every almost maximal valuation domain the methods used by Lady in \cite{Lad77} to study  torsion-free  modules over rank-one discrete valuation domains. If $R$ is an almost maximal valuation domain, we prove that $d_R(0)\leq 3$ if $fr(R)<\infty$ and that $fr(R)=d_R(0)$ if $d_R(0)\leq 2$.

For definitions and general facts about valuation rings and their 
modules we refer to the books by  Fuchs and  Salce \cite{FuSa85} and \cite{FuSa01}.

\section{maximal immediate extension of finite rank}
\label{S:MIE}
We recall some preliminary results  needed to prove Theorem~\ref{T:main} which gives a description of valuation domains with a finite total defect.

Let $M$ be a non-zero module over a valuation domain $R$. As in
\cite[p.338]{FuSa01} we set
$M^{\sharp}=\{s\in R\mid sM\subset M\}.$ 
Then $M^{\sharp}$ is a prime ideal of $R$ and is called the \textbf{top prime ideal} associated with $M$. 

\begin{proposition}
\label{P:localcompl} Let $A$ be a proper ideal of $R$ and let $L$ be a prime ideal such that $A^{\sharp}\subseteq L$ and $A$ is not isomorphic to $L$. Then $R/A$ is complete in its ideal topology if and only if $R_L/A$ is also complete in its ideal topology.
\end{proposition}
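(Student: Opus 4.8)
The plan is to reduce the statement to a comparison of two linear topologies on the common $R_L$-submodule $L/A$, and then to show that these two topologies in fact coincide; the hypothesis $A\not\cong L$ will be used precisely at the last step.

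First I would record the elementary consequences of $A^{\sharp}\subseteq L$. For $s\in R\setminus L$ we have $s\notin A^{\sharp}$, hence $sA=A$; thus $A$ is stable under the units of $R_L$, i.e.\ $A$ is an ideal of $R_L$. As $1\notin A$ this ideal is proper, so $A\subseteq L$, and since $A\not\cong L$ forces $A\neq L$ we obtain $A\subsetneq L$. Moreover $R_L/A=(R/A)_{\bar L}$ with $\bar L=L/A$, and $R/A\hookrightarrow R_L/A$ because $A=AR_L\cap R$. So the problem really compares completeness of the valuation ring $R/A$ with that of its localization at $\bar L$.

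Next I would reduce to $L/A$. Since $A\subsetneq L$, the submodule $L/A$ is a basic open (hence clopen) neighbourhood of $0$ in both $R/A$ and $R_L/A$; consequently the quotients $R/L$ and $\overline Q:=R_L/L=\mathrm{Frac}(R/L)$ carry the discrete topology and are complete. For a linearly topologized module with a clopen submodule $N$, a Cauchy net is eventually contained in a single coset of $N$, so the whole module is complete if and only if $N$ is complete in the induced topology. Applying this with $N=L/A$ inside each of $R/A$ and $R_L/A$, the statement becomes: $L/A$ is complete for the topology induced from $R/A$ if and only if it is complete for the (a priori coarser) topology induced from $R_L/A$.

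The heart of the argument is then to show that these two induced topologies on $L/A$ actually agree. They have bases $\{I/A: A\subsetneq I\subseteq L,\ I\ \text{an ideal of}\ R\}$ and $\{J/A: A\subsetneq J\subseteq L,\ J\ \text{an ideal of}\ R_L\}$; since every ideal of $R_L$ is an ideal of $R$, the first is finer, and it suffices to produce, inside any $R$-ideal $I$ with $A\subsetneq I\subseteq L$, an $R_L$-ideal $J$ with $A\subsetneq J$. I would pick $x\in I\setminus A$ and take $J=xL$. Then $J$ is an ideal of $R_L$ (as the image of the maximal ideal $L$ of $R_L$ under multiplication by $x$), and $xL\subseteq xR\subseteq I$. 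Working in the valuation domain $R_L$, from $x\notin A$ one gets $A\subseteq xR_L$; writing $a\in A$ as $a=xu$ with $u\in R_L$, a unit $u\notin L$ would give $x=au^{-1}\in A$, a contradiction, so $u\in L$ and $A\subseteq xL=J$. Finally $A\subsetneq J$, since $A=xL$ would give an isomorphism $L\cong A$ by multiplication by $x$, against the hypothesis. This is exactly where $A\not\cong L$ is indispensable: it excludes the degenerate case in which $A$ is a translate $xL$ of $L$, for which the two topologies, and the completeness properties, genuinely differ.

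The step I expect to be the main obstacle is this cofinality comparison, that is, recognizing that the finer $R$-topology adds nothing to the completion of $L/A$ and isolating the case $A=xL$ as the sole obstruction. The preliminary identifications and the clopen reduction are routine; the real content is the construction of the squeezed ideal $xL$ and the verification that $A\not\cong L$ makes the containment $A\subsetneq xL$ strict.
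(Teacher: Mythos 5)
Your proof is correct, and it reaches the result by a genuinely different organization than the paper's. The paper works directly with the coset-family definition of completeness: for one implication it translates a compatible family $(a_i+A_i)$ of $R_L$-cosets by an element $b\in a_i+L$ (all these cosets coincide since $A_i\subseteq L$), turning it into a family of $R$-cosets; for the other it replaces each $R$-ideal $A_i$ by its localization $(A_i)_L$ and checks that $\bigcap_i (A_i)_L$ is still $A$, using the identity $A=\bigcap_{a\notin A}La$ and, at the cofinality step, the hypothesis $A\ncong L$ to exclude $A=La$. You instead reduce both completeness statements at once to the clopen submodule $L/A$ and prove that the two induced linear topologies on it coincide. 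The essential content is the same in both proofs: your squeezed ideal $A\subsetneq xL\subseteq I$ is precisely the paper's observation that the $R_L$-ideals $La$, $a\notin A$, are cofinal among the $R$-ideals properly containing $A$, and in both treatments $A\ncong L$ enters exactly, and only, to make the containment $A\subseteq xL$ strict. What your packaging buys is symmetry (one topological statement replaces two separate coset manipulations) and a transparent display of where the hypothesis acts; its cost is a standard but implicit appeal to general facts about linearly topologized modules, namely the clopen-submodule reduction for Cauchy nets and the equivalence of net-completeness with the coset-family notion of completeness the paper uses (in a valuation ring this equivalence rests on the total ordering of ideals, which makes any family with $\bigcap_i A_i=A$ cofinal among the open ideals). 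The paper's hands-on version has the further advantage of being directly recyclable: in the proof of Proposition~\ref{P:factors} the same explicit coset manipulation is repeated for families $(x_r+rU)$ in a uniserial module $U$, a situation where your reduction would have to be redone in module form.
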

\begin{proof}  Let $(a_i+A_i)_{i\in I}$ be a family of cosets of $R_L$ such that $a_i\in a_j+A_j$ if $A_i\subset A_j$ and such that $A=\cap_{i\in I}A_i$. We may assume that $A_i\subseteq L,\ \forall i\in I$. So, $a_i+L=a_j+L,\ \forall i,j\in I$. Let $b\in a_i+L,\ \forall i\in I$. It follows that $a_i-b\in L,\ \forall i\in I$. If $R/A$ is complete in the $R/A$-topology, $\exists c\in R$ such that $c+b-a_i\in A_i,\ \forall i\in I$. Hence $R_L/A$ is complete in the $R_L/A$-topology too.

Conversely let $(a_i+A_i)_{i\in I}$ be a family of cosets of $R$ such that $a_i\in a_j+A_j$ if $A_i\subset A_j$ and such that $A=\cap_{i\in I}A_i$. We may assume that $A\subset A_i\subseteq L,\ \forall i\in I$. We put $A_i'=(A_i)_L,\ \forall i\in I$. We know that $A=\cap_{a\notin A}La$. Consequently, if $a\notin A$, there exists $i\in I$ such that  $A_i\subseteq La$, whence $A_i'\subseteq La$. It follows that $A=\cap_{i\in I}A_i'$. Clearly, $a_i\in a_j+A_j'$ if $A_i'\subset A_j'$. Then there exists $c\in R_L$ such that $c\in a_i+A_i',\ \forall i\in I$. Since $A_i'\subset R,\ \forall i\in I$, $c\in R$. From $A=\cap_{j\in I}A_j'$ and $A\subset A_i,\ \forall i\in I$ we deduce that $\forall i\in I,\ \exists j\in I$ such that $A_j'\subset A_i$. We get that $c\in a_i+A_i$ because $c-a_j\in A_j'\subseteq A_i$ and $a_j-a_i\in A_i$. So, $R/A$ is complete in the $R/A$-topology. \end{proof}

\begin{proposition}
\label{P:prime} \cite[Exercise II.6.4]{FuSa01} Let $R$ be a valuation ring and let $L$ be a non-zero prime ideal. Then $R$ is (almost) maximal if and only if $R/L$ is maximal and $R_L$ is (almost) maximal.
\end{proposition}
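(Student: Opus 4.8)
The plan is to translate maximality and almost maximality into completeness statements and then read them off, level by level, from the chain of ideals of $R$, using Proposition~\ref{P:localcompl} to pass between $R$ and $R_L$. I would start from the standard characterizations (cf. \cite{FuSa01}): a valuation domain $D$ is almost maximal if and only if $D/A$ is complete in its ideal topology for every nonzero ideal $A$, and $D$ is maximal if and only if it is in addition complete in the $D$-topology, i.e. if and only if $D/A$ is complete for every ideal $A$, including $A=0$. The combinatorial backbone is that, the ideals of $R$ forming a chain through $L$, every proper ideal $A$ is comparable with $L$: the ideals $A\supseteq L$ are exactly the preimages of the ideals of $R/L$, while the ideals $A\subseteq L$ with $A^{\sharp}\subseteq L$ are precisely the contracted ideals $A=AR_L\cap R$, which correspond bijectively, via $A\mapsto AR_L$, to the ideals of $R_L$ and are cofinal at the bottom (since $\bigcap_{0\neq B\subseteq L R_L}B=0$).

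First I would settle the almost-maximal equivalence. Assuming $R$ almost maximal, restricting completeness of the $R/A$ to the ideals $A\supseteq L$ and applying the maximality criterion to the ring $R/L$ (whose ideals are the $A/L$, the case $A=L$ being available because $L\neq 0$) yields that $R/L$ is maximal. Restricting instead to the cofinal family of nonzero contracted ideals $A\subseteq L$, which satisfy $A^{\sharp}\subseteq L$ and can be chosen with $A\not\cong L$, Proposition~\ref{P:localcompl} converts completeness of $R/A$ into completeness of $R_L/AR_L$; since completeness passes to quotients by closed (hence by arbitrary) ideals, this upgrades to completeness of $R_L/B$ for \emph{every} nonzero ideal $B$ of $R_L$, that is, $R_L$ is almost maximal. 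The converse runs backward along the same correspondence: maximality of $R/L$ supplies completeness of $R/A$ for $A\supseteq L$, almost maximality of $R_L$ together with Proposition~\ref{P:localcompl} supplies it for the cofinal $A\subseteq L$, and quotient-stability fills in the rest, so $R/A$ is complete for every nonzero $A$ and $R$ is almost maximal.

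For the maximal equivalence I would add the bottom ($A=0$) condition and assemble: $R$ maximal $\iff$ $R$ almost maximal and $R$ complete in the $R$-topology $\iff$ ($R/L$ maximal and $R_L$ almost maximal) and $R_L$ complete in the $R_L$-topology $\iff$ $R/L$ maximal and $R_L$ maximal. The one new ingredient is the equivalence ``$R$ complete $\iff R_L$ complete'', and this is exactly where I expect the difficulty to lie: Proposition~\ref{P:localcompl} is stated for a proper ideal $A$ with $A^{\sharp}\subseteq L$ and $A\not\cong L$, and these hypotheses are vacuous or meaningless at $A=0$, so the statement cannot be quoted directly. The main obstacle is therefore to justify the bottom transfer separately. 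I expect it to go through by re-running the coset argument of Proposition~\ref{P:localcompl} at $A=0$: the only step that uses the hypotheses is the identity $A=\bigcap_{a\notin A}La$, and its limiting form $\bigcap_{a\neq 0}La=0$ still holds (if $x$ lay in every $La$ then, taking $a=x$, one would get $1\in L$). The delicate bookkeeping will be to confirm that this extension is valid and that no ideals near $L$ are lost in the correspondence between the contracted ideals $A\subseteq L$ and the ideals of $R_L$, after which the four equivalences chain together to give the result.
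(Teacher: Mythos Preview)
Your plan is sound and close in spirit to the paper's, but the organization of the converse is genuinely different. The paper does not argue by cofinality-plus-quotient-closure; instead it performs a direct case split on the top prime $J=A^{\sharp}$ of an arbitrary nonzero ideal $A$. When $J\subseteq L$ (and $A\not\cong L$), it applies Proposition~\ref{P:localcompl} exactly as you do. When $L\subset J$, it does \emph{not} descend to a smaller contracted ideal and come back up; rather it picks $t\in J\setminus L$ and $s\in t^{-1}A\setminus A$, so that $L\subset tR\subseteq s^{-1}A$, concludes $R/s^{-1}A$ is complete from the maximality of $R/L$, and then uses that $A=s\cdot(s^{-1}A)$ (so $A\cong s^{-1}A$ and completeness is invariant under such a shift) to get $R/A$ complete. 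Your route trades this shift trick for the single principle ``linearly compact implies quotients linearly compact'', which is arguably cleaner once cofinality is in hand; the paper's route avoids having to establish cofinality at all.

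There is one point you should tighten: the claim that among the nonzero contracted ideals $A\subseteq L$ one can \emph{always} choose cofinally many with $A\not\cong L$ can fail as written. If $R_L$ happens to be a DVR, every nonzero ideal of $R_L$ is principal, hence isomorphic to $R_L$ and to $L$, so \emph{none} of your cofinal candidates satisfies the side condition of Proposition~\ref{P:localcompl}. This does not wreck the argument---one checks that the direction of Proposition~\ref{P:localcompl} you actually need does not use $A\not\cong L$ (the identity $A=\bigcap_{a\notin A}La$ holds for any proper $A$ with $A^{\sharp}\subseteq L$), and your own observation for $A=0$ already anticipates exactly this kind of extension---but you should say so explicitly rather than leave it under ``delicate bookkeeping''. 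With that patch, your proof goes through; the paper's case analysis simply sidesteps the issue.
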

\begin{proof} If $R$ is (almost) maximal, it is obvious that $R/L$ is maximal and by Proposition~\ref{P:localcompl} $R_L$ is (almost) maximal. Conversely let $A$ be a non-zero ideal and $J=A^{\sharp}$. Suppose that either $J\subset L$ or $J=L$ and $A$ is not isomorphic to $L$. Since $R_L$ is (almost) maximal it follows that $R_L/A$ is complete in its ideal topology. From Proposition~\ref{P:localcompl} we deduce that $R/A$ is complete in its ideal topology. Now, suppose that $L\subset J$. If $A\subset L$ let $t\in J\setminus L$. Thus $A\subset t^{-1}A$. Let $s\in t^{-1}A\setminus A$. Therefore $L\subset tR\subseteq s^{-1}A$. So, $R/s^{-1}A$ is complete in its ideal topology because $R/L$ is maximal, whence $R/A$ is complete too. Finally if $A\cong L$ the result is obvious. \end{proof}

\begin{proposition}
\label{P:primeunion} Let $(L_{\lambda})_{\lambda\in\Lambda}$ be a non-empty family of prime ideals of $R$ and let $L=\cup_{\lambda\in\Lambda}L_{\lambda}$. Then $L$ is prime, $R_L=\cap_{\lambda\in\Lambda}R_{L_{\lambda}}$ and $R_L$ is maximal if and only if $R_{L_{\lambda}}$ is maximal $\forall\lambda\in\Lambda$.
\end{proposition}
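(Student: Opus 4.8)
The plan is to handle the three assertions in turn, using repeatedly that the ideals of a valuation domain form a chain, so that $(L_\lambda)_\lambda$ is totally ordered and $L=\bigcup_\lambda L_\lambda$ is its union. That $L$ is prime is immediate: if $ab\in L$ then $ab\in L_\lambda$ for some $\lambda$, so $a\in L_\lambda\subseteq L$ or $b\in L_\lambda\subseteq L$, and $1\notin L$ since $1\notin L_\lambda$ for every $\lambda$. For $R_L=\bigcap_\lambda R_{L_\lambda}$ the inclusion $\subseteq$ is clear from $R\setminus L\subseteq R\setminus L_\lambda$. For $\supseteq$ I would use the elementary equivalence: for a prime ideal $J$ and $x\in Q\setminus R$, one has $x\in R_J$ iff $x^{-1}\notin J$ (note $x\notin R$ gives $x^{-1}\in P\subseteq R$, and then $x\in R_J$ iff $x^{-1}$ is a unit of $R_J$ iff $x^{-1}\notin JR_J$ iff $x^{-1}\notin J$, using $JR_J\cap R=J$). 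Applying this to each $L_\lambda$ and to $L$, an $x\in Q\setminus R$ lies in $\bigcap_\lambda R_{L_\lambda}$ iff $x^{-1}\notin L_\lambda$ for all $\lambda$ iff $x^{-1}\notin\bigcup_\lambda L_\lambda=L$ iff $x\in R_L$; since $R$ is contained in every $R_J$, the equality follows.

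For the maximality equivalence, the forward implication is short: if $R_L$ is maximal then, as $L_\lambda\subseteq L$ gives $R_{L_\lambda}=(R_L)_{L_\lambda R_L}$, Proposition~\ref{P:prime} applied to $R_L$ and its prime $L_\lambda R_L$ shows $R_{L_\lambda}$ is maximal (with $L_\lambda=0$ giving $R_{L_\lambda}=Q$, trivially maximal). The converse is the substantial part. If the union is attained, say $L=L_{\lambda^*}$, then $R_L=R_{L_{\lambda^*}}$ is maximal by hypothesis; so I may assume every $L_\lambda\subsetneq L$, which forces $L=L^2$ and, writing $V=R_L$, $M=LV=\bigcup_\lambda M_\lambda$ with $M_\lambda=L_\lambda V$, makes every proper ideal of $V$ sit inside some $M_{\lambda_0}\subsetneq M$. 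I would then prove $V$ linearly compact. Reducing, as is standard for valuation rings, to a descending chain of cosets $(a_i+A_i)_i$ with the finite intersection property, the concentration remark is that all the $A_i$ lie in one $M_{\lambda_0}$: the largest ideal $A_{i_1}$ is proper, so picking $t\in M\setminus A_{i_1}$ and $\lambda_0$ with $t\in M_{\lambda_0}$ forces $A_{i_1}\subseteq M_{\lambda_0}$, hence $A_i\subseteq M_{\lambda_0}$ for all $i$.

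The system is thus concentrated at level $\lambda_0$, where $V_{M_{\lambda_0}}=R_{L_{\lambda_0}}$ is maximal; the idea is to solve it there and descend to $V$, transferring completeness of the quotients $V/A$ to and from $V_{M_{\lambda_0}}/A$ by Proposition~\ref{P:localcompl} and passing between maximality of $V$ and of $V/M_{\lambda_0}$ by Proposition~\ref{P:prime}. The hard part will be precisely this descent. Maximality of $V_{M_{\lambda_0}}$ gives a common point only in the localization, and since passing to $V_{M_{\lambda_0}}$ enlarges the ideals (the saturation $A_iV_{M_{\lambda_0}}\cap V$ may be strictly bigger than $A_i$), I must show the solution can be chosen in $V$ with the sharp congruences $c-a_i\in A_i$; the compatibility forcing the solution's residue to lie in $V/M_{\lambda_0}$ is what should make this possible, but verifying it, and separately handling the genuinely complete case $\bigcap_i A_i=0$ (completeness of $V$ in the $R$-topology, not covered by Proposition~\ref{P:localcompl} at $A=0$), is where the real work lies. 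I expect to organise the converse as: first show $V$ is almost maximal, i.e.\ $V/A$ is maximal for every nonzero $A$, via the level-$\lambda_0$ reduction together with Propositions~\ref{P:localcompl} and~\ref{P:prime}; then show $V$ is complete in its $R$-topology; the two combined give that $V=R_L$ is maximal.
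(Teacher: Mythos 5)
Parts (1), (2) and the forward half of (3) of your proposal are correct and essentially coincide with the paper's proof. The problem is the converse of (3), which is the only substantial assertion, and which your proposal does not actually prove: you set up a single\nobreakdash-level descent and explicitly leave its key step open. That step is not a routine verification; it is the whole content, and the scheme you sketch cannot be completed as described. Keeping your notation $V=R_L$, $M=LV$, $M_\lambda=L_\lambda V$: the descent mechanism of Proposition~\ref{P:localcompl} works only for ideals $A$ with $A^{\sharp}\subseteq L$, because its proof rests on the identity $A=\bigcap_{a\notin A}La$, which is what forces the saturated ideals $(A_i)_L$ to intersect back down to $A$. For the families you must handle this hypothesis fails. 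Since $M=\bigcup_\lambda M_\lambda$ with all $M_\lambda\subsetneq M$, the ideal $M$ is idempotent and non-principal, and then for any $0\ne a\in M$ one has $aV=\bigcap\{ax^{-1}V: x\in M,\ x\mid a\}$ with each $ax^{-1}V\supsetneq aV$; so there are FIP families whose intersection ideal $A=\bigcap_iA_i$ is principal (and, as you note, also families with $\bigcap_iA_i=0$). For such $A$ one has $A^{\sharp}=M\not\subseteq M_{\lambda_0}$ and $\bigcap_iA_iV_{M_{\lambda_0}}\supseteq AV_{M_{\lambda_0}}\supsetneq A$; in particular there is an index $i$ with $A_i\subsetneq A_jV_{M_{\lambda_0}}$ for every $j$, so the cofinality trick of Proposition~\ref{P:localcompl} is unavailable. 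A solution $c'$ of the saturated system is then pinned down only modulo $AV_{M_{\lambda_0}}$, and correcting it to satisfy the sharp congruences means finding $m\in\bigcap_i(c'-a_i+A_i)$ --- a system of exactly the original type, so the method is circular, not merely unfinished. The same circularity defeats the proposed route to almost maximality via Proposition~\ref{P:prime}: applied to $V$ at the prime $M_{\lambda_0}$ it requires $V/M_{\lambda_0}$ to be maximal, and $V/M_{\lambda_0}$ is again a valuation domain whose maximal ideal is a proper union of primes at which all localizations are maximal, i.e.\ verbatim the statement being proved. (Two minor slips, both repairable: your family need not possess a largest ideal, and ``every proper ideal of $V$ sits inside some $M_{\lambda_0}$'' fails for $M$ itself; cosets of $M$ must be treated trivially.)

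The paper's proof of the converse is entirely different and avoids any level\nobreakdash-by\nobreakdash-level descent. Having established $R_L=\bigcap_\lambda R_{L_\lambda}$, it regards $R_L$ as the inverse limit of the chain $(R_{L_\lambda})_\lambda$ of linearly compact (discrete) modules and invokes Zelinsky's theorem \cite[Proposition 4]{Zel53} to conclude that $R_L$ is linearly compact in the inverse\nobreakdash-limit topology; then, since $R_L$ is Hausdorff in this linear topology and its ideals form a chain, every nonzero ideal contains an open ideal, hence is open and closed, and linear compactness in the discrete topology --- that is, maximality --- follows. This is exactly the device your outline is missing: the obstruction you ran into is of $\varprojlim^1$ type, spread over all levels $\lambda$ simultaneously, and cannot be localized at a single $\lambda_0$. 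To complete your proof you would have to prove or quote such an inverse\nobreakdash-limit compactness statement, at which point you would have reproduced the paper's argument.
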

\begin{proof} It is obvious that $L$ is prime.  Let $Q$ be the field of fractions of $R$.  If \ $x\in Q\setminus R_L$ \ then \ $x = \frac{1}{s}$ \ where \ $s\in L$.  Since \ $L=\cup_{\lambda\in\Lambda}L_{\lambda}$, \
$\exists \mu\in\Lambda$ \ such that \ $s\in L_{\mu}$.  We deduce that \ $x\notin 
R_{L_{\mu}}$ \ and \
$R_L=\cap_{\lambda\in\Lambda}R_{L_{\lambda}}$.  

If $R_L$ is maximal, we deduce that $R_{L_{\lambda}}$ is maximal $\forall\lambda\in\Lambda$ by Proposition~\ref{P:localcompl}. Conversely, by \cite[proposition 4]{Zel53} $R_L$ \ is
linearly compact in the inverse limit topology.  Since \ $R_L$ \ is  
Hausdorff 
in this linear topology then every nonzero ideal is open and also 
closed. Hence \
$R_L$ \ is linearly compact in the discrete topology. \end{proof}

\bigskip Recall that a valuation domain $R$ is \textbf{Archimedean} if its maximal $P$ is the only non-zero prime ideal and an ideal $A$ is \textbf{Archimedean} if $A^{\sharp}=P$.

\begin{proposition}\cite[Corollary 9]{Cou05} \label{P:Archimedean} Let $R$ be an Archimedean valuation domain. If $d_R(0) <\infty$, then $R$ is almost maximal. 
\end{proposition}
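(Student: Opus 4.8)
The plan is to work directly with the maximal immediate extension $\widehat{R}$, since the reduction of Proposition~\ref{P:prime} along a nonzero prime is unavailable for an Archimedean domain: its only nonzero prime is $P$ and $R/P$ is a field. Recall that an Archimedean valuation domain has rank one, so its value group $\Gamma$ embeds as an ordered subgroup of $\mathbb{R}$. The first step is to reformulate almost maximality in terms of $\widehat{R}$: I claim that $R$ is almost maximal if and only if $\widehat{R}/R$ is divisible, equivalently if and only if $R$ is dense in $\widehat{R}$ for the valuation topology, i.e. for every $x\in\widehat{R}$ and every nonzero $r\in R$ there is $a\in R$ with $v(x-a)\ge v(r)$. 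One implication is routine: if $R$ is dense, a system of pairwise compatible cosets with nonzero intersection (equivalently a pseudo-convergent sequence of finite breadth) is solved in $R$ by approximating its pseudo-limit in $\widehat{R}$ closely enough, so $R/A$ is maximal for every nonzero $A$. For the converse I would use that $\widehat{R}$ is an immediate extension: given $x\in\widehat{R}$, set $\delta=\sup\{v(x-a):a\in R\}$; since $\widehat{R}$ and $R$ share the same residue field, if $\delta\in\Gamma$ is attained at some $a_0$ one can correct $a_0$ by a unit multiple of an element of value $\delta$ to reach value $>\delta$, a contradiction. Hence either $\delta=\infty$, or $\delta$ is the breadth of a bounded pseudo-convergent sequence in $R$ with no pseudo-limit in $R$, contradicting almost maximality. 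So almost maximality forces $\delta=\infty$ for every $x$, which is exactly divisibility of $\widehat{R}/R$.

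With this reformulation the statement becomes: if $\widehat{R}/R$ is not divisible then $d_R(0)=\infty$. So assume $\widehat{R}/R$ is not divisible. By the previous step there is $x\in\widehat{R}$ and a pseudo-convergent sequence $(s_k)_{k<\omega}$ in $R$ with $v(x-s_k)=\gamma_k$ strictly increasing to a finite breadth $\delta$, admitting no pseudo-limit in $R$. Here the rank-one hypothesis enters decisively: in a discrete value group ($\Gamma\cong\mathbb{Z}$) a strictly increasing sequence $(\gamma_k)$ cannot be bounded above, so no bounded pseudo-convergent sequence of infinite length exists and $R$ is automatically almost maximal; thus only the case of a dense $\Gamma$ remains, and it is there that the finiteness of $d_R(0)$ must be contradicted.

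The heart of the proof is to convert one bounded gap into infinitely many $R$-independent elements of $\widehat{R}$. Writing $w_j=s_{j+1}-s_j$, so that $v(w_j)=\gamma_j\nearrow\delta$ and the $w_j$ do not tend to $0$, each subset $S\subseteq\omega$ yields a pseudo-convergent partial sum $\sum_{j\in S}w_j$ with a pseudo-limit $x_S\in\widehat{R}$, and, using the density of $\Gamma$, the residues of the $w_j$ can be prescribed freely so as to place the leading valuations of suitable $Q$-combinations of the $x_S$ in general position. The goal is then to select countably many subsets for which every nontrivial finite $Q$-linear combination of the corresponding $x_S$ has a single dominant term and hence cannot vanish; this exhibits an infinite $R$-independent family in $\widehat{R}$ and gives $d_R(0)=\infty$. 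The main obstacle is precisely this independence step: one must rule out that all these pseudo-limits satisfy algebraic relations of bounded degree over $Q$ — which is exactly what $d_R(0)<\infty$ would impose — while controlling the valuations of arbitrary $Q$-combinations, whose coefficients carry their own valuations, using only the density of $\Gamma$. Equivalently, by Kaplansky's classification of pseudo-convergent sequences it would suffice to produce, among these perturbations, one sequence of transcendental type, whose pseudo-limit is transcendental over $Q$ and already forces $\dim_Q\widehat{Q}=\infty$; establishing that a bounded gap over a dense value group always admits such a transcendental perturbation is where the genuine difficulty lies.
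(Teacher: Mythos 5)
Your opening reduction is sound for Archimedean domains: the equivalence ``$R$ almost maximal $\Leftrightarrow$ $\widehat{R}/R$ divisible $\Leftrightarrow$ $R$ dense in $\widehat{R}$'' can indeed be verified directly (one direction via maximality of $\widehat{R}$ plus approximation, the other via the attained/non-attained supremum dichotomy, using that $\Gamma\subseteq\mathbb{R}$ gives countable cofinality), and the discrete-value-group case is correctly dismissed as trivial. But after this reduction, what remains to prove is exactly the proposition itself: that a single bounded pseudo-convergent sequence in $R$ without pseudo-limit forces $\dim_Q\widehat{Q}=\infty$. This step you do not prove --- you describe a plan (partial sums $x_S$ over subsets $S\subseteq\omega$, residues ``prescribed freely'', dominant-term arguments) and then state explicitly that the independence step ``is where the genuine difficulty lies.'' That is not a small lacuna; it is the entire mathematical content of the statement, since all elements $x_S$ could a priori lie in a fixed finite-dimensional $Q$-subspace of $\widehat{Q}$, and nothing in the density of $\Gamma$ alone rules this out.

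Two further remarks. First, the fallback you propose --- produce a pseudo-convergent sequence of \emph{transcendental} type --- is sufficient but not necessary, and it may simply be unavailable: $\widehat{Q}$ can a priori be algebraic over $Q$ of infinite degree (towers of defect extensions in characteristic $p$), in which case the proposition is still true but no transcendental-type sequence exists; a correct proof must therefore also handle the purely algebraic case, where bounding degrees is precisely the issue. Second, note that the paper itself does not prove this proposition: it is quoted from \cite[Corollary 9]{Cou05}, where the argument is module-theoretic (working with breadth ideals and quotients $R/A$ that are non-complete in their ideal topology, and the modules this produces), rather than the field-theoretic independence construction you sketch. So your proposal is an honest outline of a plausible alternative strategy, but as it stands it has a genuine gap at the decisive step.
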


\bigskip
From Propositions~\ref{P:localcompl}, \ref{P:prime}, \ref{P:primeunion} and \ref{P:Archimedean} we deduce the following:

\begin{proposition}
\label{P:almost} Let $R$ be a valuation domain such that $d_R(0)<\infty$ and  $R/A$ is Hausdorff and complete in its ideal topology for each non-zero non-Archimedean ideal $A$.  Then $R$ is almost maximal.
\end{proposition}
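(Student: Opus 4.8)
The plan is to establish the defining property of almost maximality head-on: that $R/A$ is Hausdorff and complete in its ideal topology for \emph{every} nonzero ideal $A$. The hypothesis already grants this for every nonzero non-Archimedean $A$, so the whole task is to dispose of the Archimedean ideals, those with $A^{\sharp}=P$. It is worth isolating why these are the genuine difficulty: Proposition~\ref{P:localcompl} compares $R/A$ with $R_L/A$ only when $A^{\sharp}\subseteq L$, and for an Archimedean $A$ the only admissible prime is $P$ itself, giving the vacuous $R_P=R$; moreover an Archimedean ideal need not contain any proper prime (every nonzero principal ideal is Archimedean and may sit deep inside a proper prime), so one cannot simply push the question into an Archimedean quotient. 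These ideals must be reached by another route.

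The decisive preliminary observation is that every nonzero prime $L\subsetneq P$ is non-Archimedean: since $L$ is an $R_L$-submodule of $R$, one has $sL=L$ for all $s\notin L$, whence $L^{\sharp}\subseteq L\subsetneq P$. Consequently the hypothesis forces $R/L$ to be Hausdorff and complete, i.e.\ \emph{maximal}, for every nonzero prime $L\subsetneq P$; in effect all of the total defect is concentrated at the bottom of the spectrum. I would then argue as follows. If $R$ is Archimedean, Proposition~\ref{P:Archimedean} gives the conclusion at once. Otherwise put $L_*=\bigcap\{L : L\ \text{a nonzero prime}\}$, which is again a prime. If $L_*=0$, then every nonzero ideal $A$ contains a nonzero prime $L$ (since otherwise $A$ would lie in every nonzero prime and hence in $L_*=0$), so $R/A$ is a quotient of the maximal ring $R/L$ and is therefore complete; hence $R$ is almost maximal. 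If $L_*\ne 0$, then $L_*$ is the smallest nonzero prime, so $R_{L_*}$ is an Archimedean valuation domain; its total defect $d_{R_{L_*}}(0)$ divides $d_R(0)$ and is thus finite, so $R_{L_*}$ is almost maximal by Proposition~\ref{P:Archimedean}. Since $R/L_*$ is maximal, Proposition~\ref{P:prime} now yields that $R$ is almost maximal.

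The main obstacle, flagged above, is exactly the step by which the Archimedean ideals $A\subseteq L_*$ --- invisible to Proposition~\ref{P:localcompl} --- get their completeness certified: this is delegated to the converse direction of Proposition~\ref{P:prime}, whose proof produces the required limits once it is fed ``$R/L_*$ maximal and $R_{L_*}$ almost maximal''. The remaining points require comparatively routine care: that the intersection and the relevant unions of primes are again prime, and that the bottom of the spectrum (a possible smallest prime versus an accumulation of primes at $0$) is handled consistently, for which Proposition~\ref{P:primeunion} is the natural tool; and that $d_{R_{L_*}}(0)$ is finite, which follows from the multiplicativity of the total defect along the chain of primes. Verifying the non-Archimedean observation and organizing this bottom-of-the-spectrum dichotomy is where the real work lies.
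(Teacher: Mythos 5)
Your preliminary observation is correct (every nonzero prime $L\subsetneq P$ satisfies $L^{\sharp}=L\ne P$, so the hypothesis applies to it), and your closing dichotomy on $L_{*}=\bigcap\{\text{nonzero primes}\}$ is exactly how the paper's proof ends. But the sentence carrying your whole argument --- ``the hypothesis forces $R/L$ to be Hausdorff and complete, i.e.\ \emph{maximal}'' --- is false: for a valuation domain, Hausdorff and complete in the ideal topology is strictly weaker than maximal. Maximality of $R/L$ means that \emph{every} quotient $(R/L)/(A/L)\cong R/A$ with $A\supseteq L$ is complete (equivalently, $R/L$ is linearly compact in the discrete topology); completeness of $R/L$ is only the single case $A=L$. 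The hypothesis yields completeness of $R/A$ only for non-Archimedean $A$, and says nothing about the ideals $A\supseteq L$ with $A^{\sharp}=P$ --- for instance every principal ideal $aR\supsetneq L$ --- which are precisely the ideals you yourself flagged at the outset as the genuine difficulty. So the step ``$R/L$ is maximal'' begs the question, and the two steps resting on it (``$R/A$ is a quotient of the maximal ring $R/L$'' when $L_{*}=0$, and feeding ``$R/L_{*}$ maximal'' into Proposition~\ref{P:prime} when $L_{*}\ne 0$) collapse with it. The asserted equivalence is genuinely false, not merely unproved: Example~\ref{E:1} with $m=2$, $\ell(1)=1$, $\ell(2)=0$ gives a valuation domain with $c_R(0)=1$, so $R=\widetilde{R}$ is Hausdorff and complete, yet $d_R(0)=p>1$, so $R\ne\widehat{R}$ and $R$ is not maximal (nor even almost maximal, since a complete almost maximal domain is maximal). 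A telltale symptom is that your proof invokes $d_R(0)<\infty$ only at the bottom prime $L_{*}$; the finiteness hypothesis is what must tame the Archimedean ideals sitting above \emph{every} prime, and no rearrangement of the completeness hypothesis alone can do that.

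The paper's proof is built exactly to fill this hole. To show $R/J$ is maximal for a nonzero prime $J\subsetneq P$, it forms $L'=\bigcup_{\lambda}L_{\lambda}$, the union of the primes strictly between $J$ and $P$. Then $R/L'$ is an \emph{Archimedean} valuation domain with finite total defect, hence almost maximal by Proposition~\ref{P:Archimedean} --- this is where $d_R(0)<\infty$ disposes of the Archimedean ideals near the top --- and it is complete by the hypothesis (as $L'$ is a non-Archimedean prime), hence maximal. Separately, $R_{L'}/J$ is proved maximal via Proposition~\ref{P:primeunion}, using Proposition~\ref{P:localcompl} and the hypothesis applied to the non-Archimedean ideals of each $R_{L_{\lambda}}$ (note that the hypothesis is needed here for non-prime ideals, which your argument never uses). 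Proposition~\ref{P:prime}, applied to the domain $R/J$ with its prime $L'/J$, then gives $R/J$ maximal; only with this in hand is your dichotomy on $L_{*}$ legitimate. Incidentally, your claim that $d_{R_{L_{*}}}(0)$ \emph{divides} $d_R(0)$ also needs care, since multiplicativity of the defect requires the almost maximality being proved; the inequality $d_{R_{L_{*}}}(0)\le d_R(0)$, which the paper gets because $\widehat{(R_{L_*})}$ is a summand of $(\widehat{R})_{L_*}$, is all that is needed.
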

\begin{proof} Let $L,\ L'$ be prime ideals such that $L'\subset L$. Since $\widehat{(R_L)}$ is a summand of $(\widehat{R})_L$ we have $d_{R_L}(0)\leq d_R(0)$. On the other hand, by tensoring a pure-composition series of $\widehat{(R_L)}$ with $R_L/L'$ we get a pure-composition series of $\widehat{(R_L/L')}$. So, $d_{R_L}(L')\leq d_R(0)$.

If $R$ is Archimedean the result follows from Proposition~\ref{P:Archimedean}. Suppose that $R$ is not Archimedean, let $J$ be a non-zero ideal and let $(L_{\lambda})_{\lambda\in\Lambda}$ be the family of prime ideals properly containing $J$ and properly contained in $P$. If $\Lambda=\emptyset$ we get that $R$ is almost maximal by applying Propositions~\ref{P:Archimedean} and \ref{P:prime}. Else, let $L'=\cup_{\lambda\in\Lambda}L_{\lambda}$. By Proposition~~\ref{P:primeunion}, $R_{L'}/J$ is maximal for each non-zero prime $J$. If $L'\ne P$ then $R/L'$ is maximal by Proposition~\ref{P:Archimedean} and it follows that $R/J$ is maximal by Proposition~\ref{P:prime}. If the intersection $K$ of all non-zero primes is zero then $R$ is almost maximal. If $K\ne 0$ then $R_K$ is Archimedean. We conclude by using Propositions~\ref{P:Archimedean} and \ref{P:prime}. \end{proof}

\bigskip
Given a ring $R$, an $R$-module $M$ and $x\in M$,  the \textbf{content ideal} $\mathrm{c}(x)$ of $x$ in $M$, is the intersection of all ideals $A$ for which $x\in AM$. We say that $M$ is a \textbf{content module} if $x\in\mathrm{c}(x)M,\ \forall x\in M$.

\begin{lemma}
\label{L:cont} Let $U$ be a torsion-free module such that $U=PU$. Then:
\begin{enumerate}
\item $\forall x\in U,\ x\ne 0,\ x\notin\mathrm{c}(x)U$;
\item let $0\ne x,y\in U$ and $t\in R$ such that $x=ty$. Then $\mathrm{c}(y)=t^{-1}\mathrm{c}(x)$;
\item if $U$ is uniserial then, for each $x\in U,\ x\ne 0,\ \mathrm{c}(x)^{\sharp}=U^{\sharp}.$
\end{enumerate}   
\end{lemma}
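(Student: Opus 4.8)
The plan is to reduce everything to the elementary fact that, over a valuation domain, divisibility of a single element is controlled by one coefficient. First I would record that for $x\in U$ and an ideal $A$ one has $x\in AU$ if and only if $x\in aU$ for some $a\in A$: writing $x=\sum_{i=1}^{n}a_iu_i$ with $a_i\in A$, one of the $a_i$ divides all the others, so $x$ lies in $a_iU$ for that index. Consequently $\mathrm{c}(x)=\bigcap\{aR\mid a\in R,\ x\in aU\}$, the intersection of the principal ideals generated by the ``divisors'' of $x$ in $U$. For (1), suppose $x\in\mathrm{c}(x)U$ with $x\ne 0$. By the observation there is $a\in\mathrm{c}(x)$ with $x\in aU$; then $aR\subseteq\mathrm{c}(x)$ because $a\in\mathrm{c}(x)$, while $\mathrm{c}(x)\subseteq aR$ because $aR$ is one of the intersected ideals, whence $\mathrm{c}(x)=aR$ and $a\ne 0$. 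Writing $x=au$ and using $U=PU$ to get $u\in pU$ for some $p\in P$, I find $x\in(ap)U$, so $\mathrm{c}(x)\subseteq apR\subsetneq aR$, a contradiction.

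For (2), note first that $t\ne 0$ since $x\ne 0$, and that torsion-freeness gives $y\in aU\iff x=ty\in(ta)U$. Since $x\in tU$ always, $tR$ already belongs to the family defining $\mathrm{c}(x)$; and any divisor $a$ of $x$ with $aR\supsetneq tR$ satisfies $aR\supseteq tR\supseteq\bigcap\{bR\mid x\in bU,\ b\in tR\}$, so it does not lower the intersection. Hence $\mathrm{c}(x)$ equals the intersection taken only over divisors lying in $tR$, and substituting $b=ta$ that intersection is exactly $t\,\mathrm{c}(y)$. Thus $\mathrm{c}(x)=t\,\mathrm{c}(y)$, i.e. $\mathrm{c}(y)=t^{-1}\mathrm{c}(x)$.

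For (3) I would first reduce to showing that $\mathrm{c}(x)^{\sharp}$ is independent of $x$. A torsion-free uniserial module has rank one (any two non-zero elements are comparable, hence $Q$-linearly dependent), so for non-zero $x,y$ we may write $x=ty$; then (2) gives $\mathrm{c}(x)=t\,\mathrm{c}(y)$, and since $s(tA)\subsetneq tA\iff sA\subsetneq A$ for $t\ne 0$ we have $(tA)^{\sharp}=A^{\sharp}$, so $\mathrm{c}(x)^{\sharp}=\mathrm{c}(y)^{\sharp}$. It then suffices to prove the two inclusions between $\mathrm{c}(x)^{\sharp}$ and $U^{\sharp}$, which I would phrase through the equivalent condition $s\notin M^{\sharp}\iff sM=M$. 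If $sU=U$ then the family $\{aR\mid x\in aU\}$ is stable under multiplication by $s$, because $aU=(sa)U$; intersecting, this yields $s\,\mathrm{c}(x)\supseteq\mathrm{c}(x)$, and together with the trivial reverse inclusion gives $s\,\mathrm{c}(x)=\mathrm{c}(x)$. Hence $\mathrm{c}(x)^{\sharp}\subseteq U^{\sharp}$.

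The converse inclusion is the crux. Given $s\in U^{\sharp}$, i.e. $sU\subsetneq U$, I would pick $u\in U\setminus sU$ (so $sR$ is not among the divisors of $u$) and show $s\,\mathrm{c}(u)\subsetneq\mathrm{c}(u)$; the independence just proved transfers this to $x$. Since $u\notin sU$ and the divisor family of $u$ is closed under passing to larger ideals, every divisor $a$ of $u$ satisfies $aR\supsetneq sR$, so $s\in\mathrm{c}(u)$. If one had $s\,\mathrm{c}(u)=\mathrm{c}(u)$, then $s\in s\,\mathrm{c}(u)$ would force $1\in\mathrm{c}(u)$, hence $\mathrm{c}(u)=R$, meaning that only units divide $u$ in $U$; but $U=PU$ gives $u\in pU$ with $p\in P$, a non-unit divisor, a contradiction. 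Thus $s\in\mathrm{c}(u)^{\sharp}=\mathrm{c}(x)^{\sharp}$, proving $U^{\sharp}\subseteq\mathrm{c}(x)^{\sharp}$. The main obstacle is precisely this last step: one must use $U=PU$ (not merely torsion-freeness) to manufacture a proper divisor of $u$, and one must compare the two $\sharp$-primes through the ``$sM=M$'' reformulation rather than directly. The degenerate case $U=Q$, where $\mathrm{c}(x)=0$, is trivial and can be set aside.
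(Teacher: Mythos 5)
Your proof is correct, and while it draws on the same elementary facts as the paper, it takes a noticeably different route in parts (2) and (3). Both proofs rest on the observation (explicit in yours, tacit in the paper's) that over a valuation ring $x\in AU$ iff $x\in aU$ for a single $a\in A$, so that $\mathrm{c}(x)=\bigcap\{aR\mid x\in aU\}$; part (1) is then essentially the same argument in both. For (2), the paper verifies element-wise that $s\notin\mathrm{c}(y)$ iff $st\notin\mathrm{c}(x)$, getting the needed strict inclusions from $U=PU$; your manipulation of the defining intersection (discard divisors above $tR$, substitute $b=ta$) never invokes $U=PU$, so your (2) in fact holds for every torsion-free module over a valuation domain. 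For (3), the paper fixes $x$, sets $A=\mathrm{c}(x)$ and $L=A^{\sharp}$, and proves the two inclusions by explicit constructions: for $s\notin L$, every $y$ with $x=ty$ satisfies $\mathrm{c}(y)=t^{-1}A\subseteq L$ and hence lies in $sU$; for $s\in L\setminus A$, an element $t\in s^{-1}A\setminus A$ yields $y$ with $x=ty$ and $s\in\mathrm{c}(y)$, so $y\notin sU$. You instead prove first that $\mathrm{c}(\cdot)^{\sharp}$ does not depend on the chosen non-zero element, then deduce $\mathrm{c}(x)^{\sharp}\subseteq U^{\sharp}$ from the stability of the divisor family under multiplication by $s$ when $sU=U$, and the reverse inclusion from a single witness $u\notin sU$ together with a cancellation argument, with $U=PU$ supplying the needed non-unit divisor. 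What each buys: your organization is cleaner, isolates exactly where $U=PU$ is used, and the independence of $\mathrm{c}(\cdot)^{\sharp}$ is a nice statement in its own right; the paper's computation produces the explicit relation $\mathrm{c}(y)=t^{-1}\mathrm{c}(x)\subseteq\mathrm{c}(x)^{\sharp}$ for associated elements, which is the form actually reused later (in Theorem~\ref{T:main} and Proposition~\ref{P:factors}). Two caveats of yours are harmless: the divisible case ($\mathrm{c}(x)=0$, $U\cong Q$) that you set aside is equally a convention issue for the paper's statement, since $0^{\sharp}$ is not defined there either; and your cancellation step tacitly assumes $s\ne 0$, which costs nothing because $0\in\mathrm{c}(x)^{\sharp}$ holds trivially once $\mathrm{c}(x)\ne 0$.
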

\begin{proof} $(1)$. If $x\in\mathrm{c}(x)U$, there exist $a\in R$ and $z\in U$ such that $x=az$ and $\mathrm{c}(x)=Ra$. But, since $z\in PU$, we get a contradiction. 

$(2)$. Let $0\ne x,y\in U$ such that $x=ty$. If $s\notin\mathrm{c}(y)$ then $x=tsz$ for some $z\in U$ and $st\notin\mathrm{c}(x)$. So, $s\notin t^{-1} \mathrm{c}(x)$. Conversely, if $s\notin t^{-1}\mathrm{c}(x)$ then $st\notin\mathrm{c}(x)$. We have $x=stz$ for some $z\in U$. We get that $y=sz$. So, $s\notin\mathrm{c}(y)$.

$(3)$. We put $A=\mathrm{c}(x)$ and $L=A^{\sharp}$. Let $s\notin L$ and $y\in U$ such that $x=ty$ for some $t\in R$. Then $\mathrm{c}(y)=t^{-1}A$ and $t\notin A$. So, $t^{-1}A\subseteq L$. Consequently $y\in sU$. Let $s\in L$. If $s\in A$ then $x\notin sU$. If $s\in L\setminus A$ let $t\in s^{-1}A\setminus A$. There exists $y\in U$ such that $x=ty$. Since $\mathrm{c}(y)=t^{-1}A$ and $s\in t^{-1}A$ we deduce that $y\notin sU$. \end{proof}

\bigskip
This lemma and the previous proposition allow us to show the following theorem.

\begin{theorem}
\label{T:main} Let $R$ be a valuation domain such that $d_R(0)<\infty$. Then there exists a finite family of prime ideals $P=L_0\supset L_1\supset\dots\supset L_{m-1}\supset L_m\supseteq 0$ such that $R_{L_k}/L_{k+1}$ is almost maximal, $\forall k,\ 0\leq k\leq m-1$ and $R_{L_m}$ is maximal if $L_m\ne 0$ (or equivalently, for each proper ideal $A\ncong L_k,\ \forall k,\ 0\leq k\leq m$, $R/A$ is Hausdorff and complete in its ideal topology). Moreover, $d_R(0)=\prod_{k=1}^{k=m}c_R(L_k)$. 
\end{theorem}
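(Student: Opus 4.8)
The plan is to analyze the prime spectrum of $R$ by locating precisely those primes $L$ at which the ``completion defect'' $c_R(L)$ is nontrivial, and to show these are finite in number. First I would observe that the condition $d_R(0)<\infty$ forces, at each prime $L$, that $c_R(L)=d_{R/L\text{-residue}}$ of the relevant completion is finite; the key structural fact is the multiplicativity already foreshadowed in the statement, namely $d_R(0)=\prod_k c_R(L_k)$, which comes from the behaviour of total defect under the pure-composition filtration used in Proposition~\ref{P:almost}. So the core combinatorial claim is that the set $S=\{L\in\mathrm{Spec}(R)\mid c_R(L)>1\}$, or more precisely the set of ``jumps'' where $R_L/L'$ fails to be almost maximal for the immediately smaller prime $L'$, is finite. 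Since each factor $c_R(L_k)\geq 2$ contributes a multiplicative factor of at least $2$ to $d_R(0)$, only finitely many such jumps can occur, giving at most $\log_2 d_R(0)$ of them.

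Next I would construct the chain $P=L_0\supset L_1\supset\dots\supset L_m$ explicitly. Starting from $L_0=P$, given $L_k$ I would set $L_{k+1}$ to be the \emph{largest} prime ideal $L'\subset L_k$ such that $R_{L_k}/L'$ is almost maximal; equivalently, $L_{k+1}=\cup\{L' \text{ prime}, L'\subset L_k, R_{L_k}/L' \text{ almost maximal}\}$, where Proposition~\ref{P:primeunion} guarantees that this union is prime and that the maximality/almost-maximality property is preserved under the union (via the linear-compactness argument in that proposition). The fact that such a largest ``almost maximal reach'' exists and is itself a point at which the defect jumps is where Proposition~\ref{P:Archimedean} and Proposition~\ref{P:almost} enter: passing to $R_{L_k}$ and reducing modulo the intersection of primes below, an Archimedean reduction with finite defect must be almost maximal, so the only obstruction to extending the almost-maximal region downward is a genuine defect jump at $L_{k+1}$.

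For the equivalence with the ideal-topology completeness condition, I would invoke Proposition~\ref{P:prime} and Proposition~\ref{P:localcompl} to translate ``$R_{L_k}/L_{k+1}$ almost maximal for all $k$'' into ``$R/A$ is Hausdorff and complete in its ideal topology for every proper ideal $A$ not isomorphic to any $L_k$.'' Concretely, given such an $A$ with top prime $A^{\sharp}$, I would locate the index $k$ with $L_{k+1}\subseteq A^{\sharp}\subseteq L_k$ (here Lemma~\ref{L:cont}, part (3), controls the top prime of content ideals and lets one reduce to the uniserial/content-module situation), then apply Proposition~\ref{P:localcompl} to descend completeness of $R_{L_k}/A$ to completeness of $R/A$, using that $R_{L_k}/L_{k+1}$ being almost maximal makes $R_{L_k}/A$ complete whenever $A\not\cong L_k$. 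The finiteness of $m$ then follows once more from $d_R(0)=\prod_k c_R(L_k)$ with each factor at least $2$.

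The main obstacle I expect is not the existence of the jump primes but proving the multiplicativity $d_R(0)=\prod_{k=1}^{m}c_R(L_k)$ rigorously, since this requires carefully relating the rank of $\widehat{R}$ as an $R$-module to the completion defects $c_R(L_k)=\mathrm{rank}\,\widetilde{(R/L_k)}$ at each level. The subtle point is that total defect and completion defect coincide at each jump only after the correct localization/reduction, and one must verify that the pure-composition series of $\widehat{R}$ respects the chain $L_0\supset\dots\supset L_m$ so that the ranks multiply rather than merely add or bound one another; I would handle this by tensoring a pure-composition series of $\widehat{R_{L_k}}$ successively with the residue constructions, exactly as in the defect inequalities established in the proof of Proposition~\ref{P:almost}, and by appealing to \cite[Corollary 4]{FaZa86} for the multiplicative composition of completion defects along a chain of primes.
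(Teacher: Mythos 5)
Your chain construction does not work, and the finiteness count built on it is circular. Taking $L_{k+1}$ to be the \emph{union} of all primes $L'\subset L_k$ with $R_{L_k}/L'$ almost maximal is exactly the wrong greedy choice. Concretely, let $R$ be a maximal valuation domain whose spectrum is $P\supset M_1\supset M_2\supset\dots\supset 0$ with $\cap_k M_k=0$ (such rings exist, e.g.\ Hahn series over the value group $\mathbb{Z}^{(\omega)}$ ordered so that its convex subgroups form a strictly increasing chain with union the whole group). Every localization and every proper quotient of $R$ is maximal, so at step $k$ your family consists of \emph{all} primes below $M_k$ and its union is $M_{k+1}$: your chain is $L_k=M_k$, it descends one prime at a time forever and never terminates, although $d_R(0)=1$ and the theorem holds trivially with the chain $P\supset 0$. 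This also refutes the claim that each step contributes a multiplicative factor $\geq 2$ to $d_R(0)$ (here every step has $c_R(L_k)=1$), so the bound $m\leq\log_2 d_R(0)$ has no basis; moreover the multiplicativity you lean on for finiteness is precisely the assertion you defer to the end as ``the main obstacle'', and in the paper it is deduced from \cite[Lemma 2]{FaZa86} only \emph{after} the almost maximality statements are established. (Two further slips: in the opposite situation, where $L_k$ is a union of strictly smaller primes with almost maximal quotients, your union overshoots and equals $L_k$ itself; and Proposition~\ref{P:primeunion} is about localizations $R_{L_\lambda}$, not about quotients $R_{L_k}/L'$, so it does not give the preservation you attribute to it.)

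The idea you are missing is the one that makes the paper's proof work and makes finiteness free: the primes are not found by descending through $\mathrm{Spec}(R)$ but are read off from a pure-composition series $0=G_0\subset R=G_1\subset\dots\subset G_n=\widehat{R}$, $n=d_R(0)$, as the top primes $U_k^{\sharp}$ of its uniserial factors --- at most $n$ of them, with nothing to count. The substantive step is then to show that every proper ideal $A$ with $R/A$ Hausdorff and non-complete has $A^{\sharp}$ among these primes: by \cite[Lemma V.6.1]{FuSa01} such an $A$ is the breadth ideal $\mathrm{c}(x+R)$ of some $x\in\widehat{R}\setminus R$; embedding $x+R$ in a pure uniserial submodule $U$ of $\widehat{R}/R$ and using the uniqueness of uniserial factors of pure-composition series \cite[Theorem XV.1.7]{FuSa01} together with Lemma~\ref{L:cont}(3) yields $A^{\sharp}=U_k^{\sharp}$ for some $k$. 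Only this verifies the hypothesis of Proposition~\ref{P:almost} (completeness of the relevant Hausdorff quotients), which is what produces the almost maximality of the rings $R_{L_k}/L_{k+1}$; you invoke Propositions~\ref{P:Archimedean} and~\ref{P:almost} but never verify that hypothesis, and without some version of this breadth-ideal/content argument there is no way to do so. Your sketch of the equivalence clause has the same flaw in miniature: from $L_{k+1}\subseteq A^{\sharp}\subseteq L_k$ you cannot conclude that almost maximality of $R_{L_k}/L_{k+1}$ makes $R_{L_k}/A$ complete, because $A^{\sharp}\supseteq L_{k+1}$ does not imply $A\supseteq L_{k+1}$ (a principal ideal generated by an element deep inside $L_m$ has $A^{\sharp}=P$); handling that mismatch is exactly what the shifting trick in the proof of Proposition~\ref{P:prime} is for.
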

\begin{proof} Let $n=d_R(0)$. Then $\widehat{R}$ has a pure-composition series
\[0=G_0\subset R=G_1\subset\dots\subset G_{n-1}\subset G_n=\widehat{R}\]
such that, $\forall k,\ 1\leq k\leq n,\ U_k=G_k/G_{k-1}$ is a uniserial torsion-free module. The family $(L_0,\dots,L_m)$ is defined in the following way: $\forall j,\ 0\leq j\leq m$, there exists $k,\ 1\leq k\leq n$ such that $L_j=U^{\sharp}_k$.

Now, let $A$ be a proper ideal such that $R/A$ is Hausdorff and non-complete in its ideal topology. By \cite[Lemma V.6.1]{FuSa01} there exists $x\in\widehat{R}\setminus R$ such that $A=\mathrm{c}(x+R)$ (Clearly $\mathrm{c}(x+R)=\mathrm{B}(x)$, the breadth ideal of $x$). Let $U$ be a pure uniserial submodule of $\widehat{R}/R$ containing $x+R$ and let $M$ be the inverse image of $U$ by the natural map $\widehat{R}\rightarrow\widehat{R}/R$. From the pure-composition series of $M$ with factors $R$ and $U$, and a pure-composition series of $\widehat{R}/M$ we get a pure-composition series of $\widehat{R}$. Since each pure composition series has isomorphic uniserial factors by \cite[Theorem XV.1.7]{FuSa01}, it follows that $U\cong U_k$ for some $k,\ 2\leq k\leq n$. So, by Lemma~\ref{L:cont} $A^{\sharp}=U^{\sharp}=U_k^{\sharp}$. 

 We apply Proposition~\ref{P:almost} and deduce that $R_{L_k}/L_{k+1}$ is almost maximal $\forall k,\ 0\leq k\leq m-1$ and $R_{L_m}$ is maximal if $L_m\ne 0$.

To prove the last assertion we apply \cite[Lemma 2]{FaZa86} (The conclusion of this lemma holds if $R_L/L'$ is almost maximal, where $L$ and $L'$ are prime ideals, $L'\subset L$). \end{proof}

\bigskip
The following completes the previous theorem.

\begin{proposition}
\label{P:factors} Let $R$ be a valuation domain such that $d_R(0)<\infty$, let $(U_k)_{1\leq k\leq n}$ be the family of uniserial factors of all pure-composition series of $\widehat{R}$ and let\\
 $(L_j)_{0\leq j\leq m}$ be the family of prime ideals defined in Theorem~\ref{T:main}. Then:
\begin{enumerate}
\item $\forall k,\ 1\leq k\leq n$, $U_k\cong R_{U_k^{\sharp}}$;
\item  $\widehat{R}$ has a pure-composition series
\[0=F_0\subset R=F_1\subset\dots\subset F_{m-1}\subset F_m=\widehat{R}\]
where $F_{j+1}/F_j$ is a free $R_{L_j}$-module of finite rank, $\forall j,\ 0\leq j\leq m-1$.
\end{enumerate}  
\end{proposition}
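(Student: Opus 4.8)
The plan is to prove (1) first and to deduce (2) from it. For (1), fix $k$ and set $L=U_k^{\sharp}$. A uniserial torsion-free module over a valuation domain has rank one (two independent cyclic submodules would be incomparable), so each $U_k$ has rank one; moreover every $s\in R\setminus L$ satisfies $sU_k=U_k$ and acts injectively, hence bijectively, so $U_k$ is a rank-one uniserial $R_L$-module whose top prime is the maximal ideal $M=LR_L$. I would then reduce the claim to showing that $U_k$ is cyclic over $R_L$: if $MU_k\subsetneq U_k$, then any $u\in U_k\setminus MU_k$ generates $U_k$ over $R_L$ (by uniseriality, $U_k\supsetneq R_Lu$ would give $u=sw$ with $s\in M$, i.e. $u\in MU_k$), whence $U_k\cong R_L=R_{U_k^{\sharp}}$. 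When $L=0$ this is automatic, since then $U_k$ is divisible and torsion-free of rank one, so $U_k\cong Q=R_0$.

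The heart of (1), and the step I expect to be the main obstacle, is therefore to exclude $MU_k=U_k$ when $L\neq 0$. This case is not vacuous in general: when $M$ is idempotent there exist rank-one uniserial $R_L$-modules with top prime $M$ that are divisible by $M$ (``open-cut'' modules), and these are \emph{not} isomorphic to $R_L$, so the content of the hypothesis must be used to rule them out as factors of $\widehat{R}$. Two facts push in the right direction: since $\widehat{R}$ is an immediate extension of $R$ the value groups coincide, so every element of $\widehat{R}$ has principal content over $R$ and lies in it (i.e. $\widehat{R}$ is a content module over $R$); and, by Theorem~\ref{T:main}, a non-complete Hausdorff quotient $R/A$ forces $A\cong L_j$, so the breadth ideal realizing $U_k$ (via \cite[Lemma V.6.1]{FuSa01}, as in the proof of Theorem~\ref{T:main}) is isomorphic to the prime $L$ itself. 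These alone do not close the divisible case, because principality of contents need not survive a pure quotient (it fails precisely when $L=0$). The decisive input is the almost-maximality of $R_{L_j}/L_{j+1}$: it makes the relevant quotients complete and confines the total defect to genuine rank-one jumps rather than divisible ones, so the factor must be the cyclic module $R_L$. I would formalize this by a counting argument in the spirit of \cite[Lemma 2]{FaZa86} (the completion defect $c_R(L)$ being contributed by copies of $R_L$), or by a linear-compactness argument using that $\widehat{R}$ is maximal; Lemma~\ref{L:cont}(2),(3) then govern the scaling and the top primes needed to produce a generator.

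For (2), part (1) shows that every uniserial factor is isomorphic to some $R_{L_j}$. I would first reorder a pure-composition series of $\widehat{R}$ so that the top primes of the successive factors decrease, i.e. so that larger primes occur lower. The only move needed is to interchange two adjacent factors $R_{L_a}$ (below) and $R_{L_b}$ (above) with $L_a\subsetneq L_b$: localizing their extension $E$ at $L_b$ gives $0\to R_{L_a}\to E\otimes_R R_{L_b}\to R_{L_b}\to 0$ over $R_{L_b}$, whose quotient is the free module $R_{L_b}$, so the sequence splits and $E$ contains a pure submodule isomorphic to $R_{L_b}$, permitting the swap. After sorting, the factors with a common top prime $L_j$ form a consecutive block $E_j$; since every $s\in R\setminus L_j$ acts bijectively on each factor it acts bijectively on $E_j$, so $E_j$ is an $R_{L_j}$-module, and being an iterated extension of copies of $R_{L_j}$ over the ring $R_{L_j}$ it is free of finite rank because $\mathrm{Ext}^1_{R_{L_j}}(R_{L_j},R_{L_j})=0$. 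Taking $F_j$ to be the pure submodule spanned by the blocks of top prime $L_0,\dots,L_{j-1}$ (so $F_1=R$ is the single block of top prime $P=L_0$) yields the asserted filtration with $F_{j+1}/F_j$ free over $R_{L_j}$. The main obstacle in (2) is realizing the prescribed grouping by an actual chain of pure submodules, rather than merely up to isomorphism of factors; this is exactly what the localization--splitting swap accomplishes.
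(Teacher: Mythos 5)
Your part (1) has a genuine gap at exactly the point you yourself flag as the main obstacle: the exclusion of $U_k=LU_k$ (equivalently, the production of an element of $U_k$ whose content ideal is, up to a unit, the prime $L$ itself) is never proved, only deferred to two unexecuted suggestions (``a counting argument in the spirit of \cite[Lemma 2]{FaZa86}'' or ``a linear-compactness argument''). That step is the bulk of the paper's proof, and it cannot be extracted from Theorem~\ref{T:main} alone: Theorem~\ref{T:main} controls quotients $R/A$ of the ring, whereas for $k\geq 3$ the content ideal $A=\mathrm{c}(x+G_{k-1})$ of an element of $U_k=G_k/G_{k-1}$ need not be the breadth ideal of any element of $\widehat{R}$ over $R$, so your phrase ``the breadth ideal realizing $U_k$'' is only justified for a factor embedding purely in $\widehat{R}/R$, such as $U_2$. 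What the paper actually does is: (i) prove a coset-intersection property for an arbitrary uniserial torsion-free module $U$ with respect to any ideal $A$ with $A^{\sharp}=L_j$ and $A\ncong L_j$, using the almost maximality of $R_{L_j}/L_{j+1}$ together with localization arguments as in Propositions~\ref{P:localcompl} and~\ref{P:prime}; (ii) transfer this property to every $G_k$ by induction along the pure-composition series; (iii) assuming $A\ncong L_j$, use (ii) for $G_{k-1}$ to find $g$ with $x-g\in rG_k$ for all $r\notin A$, and then invoke the fact that $G_k$ is a content module (being a pure-essential extension of a free module, \cite[Proposition 23]{Couch07}) to get $x-g\in AG_k$, hence $x+G_{k-1}\in AU_k$, contradicting Lemma~\ref{L:cont}(1) since $U_k=PU_k$ for $k\geq 2$; only then does scaling via Lemma~\ref{L:cont}(2) yield $U_k=R_{L_j}y'$. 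None of this machinery (the coset property for modules rather than for $R$, its inductive propagation, the content-module property of $G_k$) appears in your proposal, and maximality of $\widehat{R}$ by itself does not deliver statements about content ideals in the subquotients $U_k$ without doing this work.

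Part (2), by contrast, is essentially correct once (1) is granted, and it follows a genuinely different route from the paper's: you sort a pure-composition series by swapping adjacent factors $R_{L_a}$ (below) and $R_{L_b}$ (above) when $L_a\subset L_b$, then note that each block with constant top prime $L_j$ is an iterated self-extension of $R_{L_j}$ over the ring $R_{L_j}$ and hence free; this is the strategy of Propositions~\ref{P:torsion-free1} and~\ref{P:torsion-free}, whereas the paper proves (2) directly by extracting from $M=\widehat{R}/R$ a pure free $R_{L_1}$-submodule $N$ with $N/L_1N\cong M/L_1M$ via \cite[Proposition 21]{Couch07} and inducting. Two small repairs: the splitting of the localized sequence $0\rightarrow R_{L_a}\rightarrow E_{L_b}\rightarrow R_{L_b}\rightarrow 0$ does not immediately give a section landing inside $E$; you should instead restrict the projection $E_{L_b}\rightarrow R_{L_a}$ to $E$, which is exactly the proof of Lemma~\ref{L:ext1} (citable verbatim, with $U=R_{L_b}$, $V=R_{L_a}$). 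Also, your assertion that $R$ is the entire block at $L_0=P$ requires knowing that no factor $U_k$ with $k\geq 2$ has top prime $P$, which again rests on the equality $U_k=PU_k$ coming from $\widehat{R}/P\widehat{R}\cong R/P$ — i.e., on material from the proof of (1).
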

\begin{proof} $(1)$. Let $A$ be an ideal such that $\exists j,\ 0\leq j\leq m,\ A^{\sharp}=L_j$ and $A\ncong L_j$. In the sequel we put $L_{m+1}=0$ if $L_m\ne 0$.
 
 First, for each uniserial torsion-free module $U$, we will show that each family $(x_r+rU)_{r\in R\setminus A}$ has a non-empty intersection if $x_r\in x_t+tU$, $\forall r,t\in R\setminus A,\ r\in tR$. As in the proof of Proposition~\ref{P:prime} we may assume that $L_{j+1}\subset A$. Since $R_{L_j}/L_{j+1}$ is almost maximal and $A$ is an ideal of $R_{L_j}$ the family $(x_r+rU_{L_j})_{r\in R\setminus A}$ has a non-empty intersection. If $r\in L_j\setminus A$, we have $r^{-1}A\subset L_j$. So, if $t\in L_j\setminus r^{-1}A$ then $rt\notin A$ and $rtU_{L_j}\subseteq rU$. It follows that we can  do as in the proof of Proposition~\ref{P:localcompl} to show that the family $(x_r+rU)_{r\in R\setminus A}$ has a non-empty intersection.
 
 Let \(0=G_0\subset R=G_1\subset\dots\subset G_{n-1}\subset G_n=\widehat{R}\) be a pure-composition series of $\widehat{R}$ whose factors are the $U_k,\ 1\leq k\leq n$. By induction on $k$ and by using the pure-exact sequence \(0\rightarrow G_{k-1}\rightarrow G_k\rightarrow U_k\rightarrow 0\),  we get that each family $(x_r+rG_k)_{r\in R\setminus A}$ for which $x_r\in x_t+tG_k,\ \forall r,t\in R\setminus A,\ r\in tR$, has a non-empty intersection.
 
 Let $k,\ 2\leq k\leq n,$ be an integer, let $0\ne x\in G_k\setminus G_{k-1}$ and let $A=\mathrm{c}(x+G_{k-1})$. Then $A^{\sharp}=U_k^{\sharp}=L_j$ for some $j,\ 1\leq j\leq m$. We shall prove that $A\cong L_j$. For each $r\in R\setminus A,\ x=g_r+ry_r$ for some $g_r\in G_{k-1}$ and $y_r\in G_k$. Let $r,t\in R\setminus A$ such that $r\in tR$. Then we get that $g_r\in g_t+tG_k\cap G_{k-1}=g_t+tG_{k-1}$ since $G_{k-1}$ is a pure submodule. If $A\ncong L_j$ the family $(g_r+rG_{k-1})_{r\in R\setminus A}$ has a non-empty intersection. Let $g\in g_r+rG_{k-1}\ \forall r\in R\setminus A$. Then $(x-g)\in rG_k,\ \forall r\in R\setminus A$. Since $G_k$ is a pure-essential extension of a free module, $G_k$ is a content module by \cite[Proposition 23]{Couch07}. It follows that $(x-g)\in AG_k$. So $x+G_{k-1}\in AU_k$. But, since $k\geq 2$, $U_k=PU_k$ because $\widehat{R}/P\widehat{R}\cong R/PR$. So, $x+G_{k-1}\notin AU_k$. From this contradiction we get that $A=sL_j$ for some $0\ne s\in R$. If $sL_j\ne L_j$ then $x+G_{k-1}=sy+G_{k-1}$ for some $y\in G_k$ because $s\notin A$. If follows that $\mathrm{c}(y+G_{k-1})=L_j$. We put $y'=y+G_{k-1}$. Then, for each $z\in U_k\setminus Ry'$ there exists $t\in R\setminus L_j$ such that $y'=tz$. We get that $U_k=R_{L_j}y'$. 
 
 $(2)$. Let $M=\widehat{R}/R$. Then $L_1=M^{\sharp}$. From above we get that $M/L_1M\ne 0$. By \cite[Proposition 21]{Couch07} $M$ contains a pure free $R_{L_1}$-submodule $N$ such that $N/L_1N\cong M/L_1M$. It follows that $(M/N)^{\sharp}=L_2$. We set $F_2$  the inverse image of $N$ by the natural map $\widehat{R}\rightarrow M$. We complete the proof by induction on $j$. \end{proof}

\section{Torsion-free modules of finite rank.}
\label{S:torsion}

In this section we give some precisions on the structure of torsion-free $R$-modules of finite rank when $R$ satisfies a condition weaker than $d_R(0)<\infty$. The following lemmas are needed.

\begin{lemma}
\label{L:cycl} Let $R$ be a valuation ring (possibly with zerodivisors), let $U$  be a uniserial module and let $L$ be a prime ideal such that $L\subset U^{\sharp}$. Then $U_L$ is a cyclic $R_L$-module.
\end{lemma}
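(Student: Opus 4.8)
The plan is to exhibit an explicit generator of $U_L$ over $R_L$. Writing $S=R\setminus L$, so that $R_L=S^{-1}R$ and $U_L=S^{-1}U$, the idea is that any element lying outside a suitable proper submodule $sU$ will already generate after localization. Since $L\subset U^{\sharp}$, I can choose $s\in U^{\sharp}\setminus L$; by the definition of $U^{\sharp}$ this gives $sU\subset U$, so there is some $x\in U\setminus sU$ (in particular $x\ne 0$, since $0\in sU$). Let $\bar x$ denote the image of $x$ in $U_L$. I claim $U_L=R_L\bar x$.

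To prove the claim I take an arbitrary $y\in U$ and use that $U$ is uniserial, so the cyclic submodules $Rx$ and $Ry$ are comparable. If $Ry\subseteq Rx$, then $y=cx$ for some $c\in R$ and hence $\bar y=c\bar x\in R_L\bar x$, which is immediate. The substantial case is $Rx\subseteq Ry$: here $x=ay$ for some $a\in R$, and everything comes down to showing that $a$ is invertible in $R_L$, i.e. $a\notin L$. Granting this, $\bar y=a^{-1}\bar x\in R_L\bar x$ in $U_L$, and since $y$ was arbitrary we conclude that $U_L=R_L\bar x$ is cyclic.

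The main obstacle, and the only place where the hypotheses are genuinely used, is the step $a\notin L$, which I would argue by contradiction. Suppose $a\in L$. Because $R$ is a valuation ring its ideals are totally ordered, and since $s\notin L$ while $Ra\subseteq L$ we cannot have $Rs\subseteq Ra$ (that would force $s\in L$); hence $Ra\subseteq Rs$, say $a=sb$ with $b\in R$. But then $x=ay=s(by)\in sU$, contradicting the choice of $x\notin sU$. Therefore $a\notin L$, which completes the argument. The hypothesis $L\subset U^{\sharp}$ is exactly what guarantees the existence of the element $s\in U^{\sharp}\setminus L$ driving this contradiction, and no assumption on $R$ beyond being a valuation ring (possibly with zerodivisors) is needed, since the reasoning uses only the comparability of ideals and of the cyclic submodules of the uniserial module $U$.
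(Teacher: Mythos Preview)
Your proof is correct and follows essentially the same approach as the paper's: choose $s\in U^{\sharp}\setminus L$, pick $x\in U\setminus sU$, and show that any $y\in U$ with $x=ay$ must have $a\notin L$ by playing $a$ off against $s$ and using $x\notin sU$. The paper's version is terser (it only treats the nontrivial case $y\notin Rx$ and runs the contradiction in the reverse order, first showing $a\notin Rs$ and then deducing $a\notin L$), but the argument is the same.
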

\begin{proof} Let $s\in U^{\sharp}\setminus L$ and let $x\in U\setminus sU$. Let $y\in U\setminus Rx$. There exists $t\in P$ such that $x=ty$. Then $t\notin Rs$, whence $t\notin L$. It follows that $U_L=R_Lx$. \end{proof}

\begin{lemma}
\label{L:ext1} Let $R$ be a valuation ring (possibly with zerodivisors), let $U$ and $V$ be uniserial modules such that $V^{\sharp}\subset U^{\sharp}$. Assume that $U_L$ is a faithful $R_L$-module, where $L=V^{\sharp}$. Then $\mathrm{Ext}_R^1(U,V)=0$.
\end{lemma}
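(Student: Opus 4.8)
The plan is to show directly that every short exact sequence $0\to V\xrightarrow{\iota} E\xrightarrow{\pi} U\to 0$ splits. The first step is to pin down the structure of $U$ near $L$. Since $L=V^{\sharp}\subset U^{\sharp}$, Lemma~\ref{L:cycl} gives that $U_L$ is a cyclic $R_L$-module; being faithful, a cyclic faithful module over $R_L$ is free of rank one, so $U_L\cong R_L$. I also record the only property of $V$ that I intend to use: $V$ is divisible by every $s\in R\setminus L$, since $s\notin V^{\sharp}=L$ means exactly $sV=V$.

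Next I build a section $\sigma\colon U\to E$ starting from the cyclic submodule $Rx$, where $x\in U$ is chosen (as in the proof of Lemma~\ref{L:cycl}) with $U_L=R_Lx$, and then extending upward. Fix a lift $e\in E$ of $x$. The key move is division: every $y\in U\setminus Rx$ satisfies $x=ty$ with $t\notin L$ (this is where $L\subsetneq U^{\sharp}$ enters, through Lemma~\ref{L:cycl}), and I can then replace an arbitrary lift $f_0$ of $y$ by $f=f_0-\tau$, where $\tau\in V$ solves $t\tau=tf_0-e$; this is solvable because $tf_0-e\in\ker\pi=V$ and $tV=V$ as $t\notin L$. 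The corrected lift satisfies $\pi(f)=y$ and $tf=e$, which is exactly the relation that $\sigma$ must respect. Organising these choices by a Zorn's lemma argument over partial sections yields $\sigma$ on all of $U$, divisibility of $V$ supplying each required lift.

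The delicate point, and where I expect the real work to lie, is the well-definedness of the partial sections against the relevant annihilators, beginning with the base requirement that $rx\mapsto re$ be well defined on $Rx$, i.e. that $\mathrm{Ann}_R(x)$ kill $e$. This is automatic when $R$ is a domain, or more generally when $U$ has no $L$-torsion, since then $U$ embeds into $U_L\cong R_L$ and $\mathrm{Ann}_R(x)=0$. In the presence of zero-divisors $\mathrm{Ann}_R(x)$ may be non-zero; here faithfulness of $U_L$ forces $\mathrm{Ann}_R(x)$ to vanish after localising at $L$, whence $\mathrm{Ann}_R(x)\subseteq L$, and the task becomes to correct $e$ by an element of $V$ so that it is annihilated by $\mathrm{Ann}_R(x)$. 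This amounts to extending the homomorphism $\mathrm{Ann}_R(x)\to V$, $r\mapsto re$, to a map $R\to V$; its image lies in the $(R\setminus L)$-divisible $L$-torsion part of $V$, and I would settle it using this divisibility together with the total ordering of the ideals of $R$ and the uniseriality of $V$.

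As a consistency check for the torsion-free direction, write $T$ for the $L$-torsion submodule of $V$ and $V/T=V_L$ for its maximal $R_L$-module quotient. Flatness of $R_L$ gives (since $V_L$ is an $R_L$-module) the isomorphism $\mathrm{Ext}^1_R(U,V_L)\cong\mathrm{Ext}^1_{R_L}(U_L,V_L)=\mathrm{Ext}^1_{R_L}(R_L,V_L)=0$, so the long exact sequence attached to $0\to T\to V\to V_L\to 0$ collapses the whole problem onto $\mathrm{Ext}^1_R(U,T)$; this is precisely the torsion obstruction isolated in the previous paragraph, confirming that the arithmetic with zero-divisors is where the proof concentrates.
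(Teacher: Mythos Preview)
Your argument is correct in outline but substantially more laborious than necessary, and the short route is already hiding inside your own write-up. The paper's proof is three lines: since $V^{\sharp}=L$, the module $V$ is an $R_L$-module; by Lemma~\ref{L:cycl} together with the faithfulness hypothesis, $U_L\cong R_L$ is free of rank one; hence for any extension $0\to V\to M\to U\to 0$ the localized sequence $0\to V\to M_L\to U_L\to 0$ splits, and the retraction $M_L\to V$ restricts along $M\to M_L$ to a retraction $M\to V$. That is all.

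Your ``consistency check'' in the last paragraph is exactly this argument, phrased via the change-of-rings isomorphism $\mathrm{Ext}^1_R(U,V_L)\cong\mathrm{Ext}^1_{R_L}(U_L,V_L)=0$. What you miss is that the $L$-torsion submodule $T\subseteq V$ is automatically zero: the condition $V^{\sharp}=L$ makes every $s\notin L$ act bijectively on $V$, so $V=V_L$ and there is no ``residual torsion obstruction'' to analyse. Once $T=0$, your final paragraph \emph{is} the whole proof, and the explicit Zorn's-lemma construction of a section becomes superfluous. Indeed, the well-definedness worry you isolate on $Rx$ also evaporates for the same reason: for $r\in\mathrm{Ann}_R(x)$ you showed $re$ is $L$-torsion in $V$, hence $re=0$ outright, and no correction of $e$ is needed.

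So the explicit-section approach works, but every delicate step you flag (annihilator compatibility, the Zorn extension, the torsion piece of $V$) is rendered trivial by the single observation that $V$ is already an $R_L$-module. The paper's localization argument buys you exactly that economy.
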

\begin{proof} Let $M$ be an extension of $V$ by $U$. By lemma~\ref{L:cycl} $U_L$ is a free cyclic $R_L$-module. Since $V$ is a module over $R_L$, it follows that $V$ is a summand of $M_L$. We deduce that $V$ is a summand of $M$ too. \end{proof}

\begin{lemma}
\label{L:ext2} Let $R$ be a valuation domain for which there exists a prime ideal $L\ne P$ such that $R/L$ is almost maximal. Then $\mathrm{Ext}_R^1(U,V)=0$ for each pair of ideals  $U$ and $V$  such that $L\subset U^{\sharp}\cap V^{\sharp}$.
\end{lemma}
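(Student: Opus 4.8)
The plan is to compare the two top prime ideals and reduce everything to a single essential case. Since $R$ is a valuation domain its prime ideals are totally ordered, so $U^\sharp$ and $V^\sharp$ are comparable. If $V^\sharp\subsetneq U^\sharp$, then I would conclude immediately from Lemma~\ref{L:ext1} applied with the prime $L'=V^\sharp$: the module $U$ is an ideal of the domain $R$, hence torsion-free, so $U_{V^\sharp}$ is a nonzero torsion-free $R_{V^\sharp}$-module and therefore faithful, which is exactly the hypothesis of Lemma~\ref{L:ext1}. Thus $\mathrm{Ext}_R^1(U,V)=0$ in this case, and the almost maximality of $R/L$ is not needed. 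Note that this case only recovers the ``easy half''; the content beyond Lemma~\ref{L:ext1} is the reverse inequality.

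It remains to treat $U^\sharp\subseteq V^\sharp=:J$, where $L\subsetneq J$. Here I would first descend to $R_J$. Given any extension $0\to V\to M\to U\to 0$ of $R$-modules, multiplication by $s\notin J$ is bijective on $V$ (as $V^\sharp=J$) and on $U$ (as $U^\sharp\subseteq J$), hence, by the five lemma, on $M$; so $M$ is an $R_J$-module, and every $R$-linear section is automatically $R_J$-linear. Consequently the extension classes over $R$ and over $R_J$ coincide, and $\mathrm{Ext}^1_R(U,V)\cong\mathrm{Ext}^1_{R_J}(U,V)$. Moreover $R_J/LR_J=(R/L)_{J/L}$ is almost maximal by the forward direction of Proposition~\ref{P:prime}, while over $R_J$ one has $V^\sharp=JR_J$, the maximal ideal, and $LR_J\subsetneq U^\sharp R_J$. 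Replacing $R$ by $R_J$, I may therefore assume $V^\sharp=P$, that $R/L$ is almost maximal with $L\subsetneq P$, and $L\subsetneq U^\sharp\subseteq P$.

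In this reduced situation the argument is a completeness argument in the spirit of Propositions~\ref{P:localcompl} and \ref{P:prime}. Since $U$ is an ideal of the valuation domain $R$ it is flat, so the sequence $0\to V\to M\to U\to 0$ is pure-exact for every extension $M$; it therefore suffices to produce a retraction of the inclusion $V\hookrightarrow M$. Lifting generators of the uniserial quotient $U$ reduces the existence of such a retraction to solving a compatible system of congruences $v\equiv v_i\pmod{A_iV}$ in $V$, where the $A_i$ are the breadth ideals attached to the extension. Because $U^\sharp\supsetneq L$, these moduli lie strictly above $L$, so the condition $V^\sharp=P$ lets me push the system down to the almost maximal ring $R/L$, where a solution exists by the congruence-solving technique already used in the proof of Proposition~\ref{P:localcompl}; lifting a solution back to $V$ yields the retraction and hence $\mathrm{Ext}_R^1(U,V)=0$.

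The main obstacle is exactly this last step: checking that the moduli occurring in the extension are bounded below by $L$, and that a solution of the reduced congruences over $R/L$ genuinely lifts to $V$ — in other words, that the almost maximality of $R/L$ alone (and not of $R$) already suffices to split pure extensions by uniserial modules whose top prime strictly contains $L$. Everything preceding it is formal: the totally ordered dichotomy, the faithfulness input to Lemma~\ref{L:ext1}, the descent to $R_J$ via the five lemma and Proposition~\ref{P:prime}, and the flatness of $U$.
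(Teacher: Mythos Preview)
Your case split and the reduction to $V^\sharp=P$ via localization at $J=V^\sharp$ are correct and clean; the first case is indeed immediate from Lemma~\ref{L:ext1}, and the descent to $R_J$ together with Proposition~\ref{P:prime} is fine. But the last step is not a proof, only a hope, and you say as much. The phrases ``the moduli lie strictly above $L$'' and ``pushing the system down to $R/L$ and lifting back'' hide exactly the difficulty: the congruences you must solve live in $V$, a torsion-free $R$-module on which $L$ does not act trivially, so passing to $R/L$ means passing to $V/LV$, and a solution there gives you an element of $V$ only modulo $LV$. You still have to bridge the gap between $LV$ and $0$, and nothing in your outline explains why almost maximality of $R/L$ alone controls that. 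Proposition~\ref{P:localcompl} is about ideals of $R$, not about congruences inside an arbitrary ideal $V$; its proof does not transfer.

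The paper takes a different route that bypasses this lifting problem. It does not reduce to $V^\sharp=P$ and does not set up congruences at all. Instead it tensors the whole extension with $R/L$: since $L\subsetneq U^\sharp\cap V^\sharp$, the modules $U/LU$ and $V/LV$ are nonzero non-divisible rank-one torsion-free $R/L$-modules, so almost maximality of $R/L$ (via \cite[Proposition~VI.5.4]{FuSa85}) splits $M/LM\cong U/LU\oplus V/LV$. The genuine work is then to pull this splitting back to $M$. For that the paper localizes at $L$: by Lemma~\ref{L:cycl} both $U_L$ and $V_L$ are cyclic, hence $LM\cong LM_L$ is a direct sum of two uniserials, and \cite[Theorem~XII.2.2]{FuSa01} lets one complement pure uniserial submodules inside it. This produces a pure-exact sequence $0\to LM\to H_1\oplus H_2\to M\to 0$ with $H_1\oplus H_2$ a direct sum of uniserials, and one more application of \cite[Theorem~XII.2.2]{FuSa01} forces $M$ itself to be a direct sum of uniserials, hence $M\cong U\oplus V$. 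So the missing idea in your sketch is precisely this structural decomposition of $LM$ coming from the localization at $L$, which is what allows the splitting over $R/L$ to be promoted to a splitting over $R$.
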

\begin{proof} Let $M$ be an extension of $V$ by $U$. It is easy to check that $U/LU$ and $V/LV$ are non-zero and non divisible $R/L$-modules. Since $R/L$ is almost maximal $M/LM\cong U/LU\oplus V/LV$ by \cite[Proposition VI.5.4]{FuSa85}. If $L\ne 0$, it follows that there exist two submodules $H_1$ and $H_2$ of $M$, containing $LM$, such that $H_1/LM\cong U/LU$ and $H_2/LM\cong V/LV$. For $i=1,2$ let $x_i\in H_i\setminus LM$ and let $A_i$ be the submodule of $H_i$ such that $A_i/Rx_i$ is the torsion submodule of $H_i/Rx_i$. Then $A_i+LM/LM$ is a non-zero pure submodule of $H_i/LM$ which is of rank one over $R/L$. It follows that $H_i=A_i+LM$. By Lemma~\ref{L:cycl} $M_L\cong V_L\oplus U_L$. We deduce that $LM\cong LM_L$ is a direct sum of uniserial modules. Since $A_i\cap LM$ is a non-zero pure submodule of $LM$ there exists a submodule $C_i$ of $LM$ such that $LM=(A_i\cap LM)\oplus C_i$ by \cite[Theorem XII.2.2]{FuSa01}. It is easy to check that $H_i=A_i\oplus C_i$. From $M=H_1+H_2$ and $LM=H_1\cap H_2$ we deduce that the following sequence is pure exact: \[0\rightarrow LM\rightarrow H_1\oplus H_2\rightarrow M\rightarrow 0,\]
where the homomorphism from $LM$ is given by $x\mapsto(x,-x),\ x\in LM$, and the one onto $M$ by $(x,y)\mapsto x+y,\ x\in H_1,\ y\in H_2$.
Since $H_1\oplus H_2$ is a direct sum of uniserial modules, so is $M$ by \cite[Theorem XII.2.2]{FuSa01}. Consequently $M\cong V\oplus U$. \end{proof}

\begin{proposition}
\label{P:torsion-free1} Let $R$ be a valuation domain. Let $G$ be a torsion-free $R$-module of finite rank. Then $G$ has a pure-composition series with uniserial factors $(U_k)_{1\leq k\leq n}$ such that $U_k^{\sharp}\supseteq U_{k+1}^{\sharp},\ \forall k,\ 1\leq k\leq n-1$.
\end{proposition}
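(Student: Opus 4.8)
The plan is to start from an arbitrary pure-composition series of $G$ with uniserial factors and then to reorder its factors by adjacent transpositions until the top prime ideals decrease along the series. First I would recall that $G$ admits a pure-composition series $0=G_0\subset G_1\subset\dots\subset G_n=G$ whose factors $V_k=G_k/G_{k-1}$ are torsion-free of rank one, hence uniserial: such a series is built by induction on the rank, taking at each step the pure closure of a cyclic submodule, since the quotient of a finite-rank torsion-free module by a pure rank-one submodule is again torsion-free of rank one less. As each $V_k^{\sharp}$ is a prime ideal of $R$ and the spectrum of a valuation domain is totally ordered, the top primes $V_1^{\sharp},\dots,V_n^{\sharp}$ form a finite chain, and the goal is to realize the nonincreasing ordering of this chain by a suitable series.

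The engine of the reordering is the observation that whenever two consecutive factors are \emph{badly ordered}, that is $V_k^{\sharp}\subset V_{k+1}^{\sharp}$ with strict inclusion, the rank-two subquotient $W=G_{k+1}/G_{k-1}$ sits in a short exact sequence $0\rightarrow V_k\rightarrow W\rightarrow V_{k+1}\rightarrow 0$ that \emph{splits}. This is exactly Lemma~\ref{L:ext1} applied with $V=V_k$, $U=V_{k+1}$ and $L=V_k^{\sharp}$: the hypothesis $V^{\sharp}\subset U^{\sharp}$ is our strict inclusion, while the required faithfulness of $U_L=(V_{k+1})_L$ is automatic, since a nonzero localization of a torsion-free module is torsion-free and hence faithful over $R_L$. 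From the resulting splitting $W\cong V_k\oplus V_{k+1}$ I select the summand $W'\cong V_{k+1}$, which is pure in $W$ and satisfies $W/W'\cong V_k$, and I let $G_k'$ be its preimage in $G_{k+1}$ under $G_{k+1}\rightarrow W$. Then $G_{k-1}\subset G_k'\subset G_{k+1}$ with $G_k'/G_{k-1}\cong V_{k+1}$ and $G_{k+1}/G_k'\cong V_k$, and $G_k'$ is pure in $G$ by transitivity of purity. Replacing $G_k$ by $G_k'$ leaves every other term of the series untouched and exchanges the factors in positions $k$ and $k+1$, so that their top primes are swapped into the correct order.

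It then remains to handle termination. I would attach to each series the number of inversions $I=\#\{(k,l):k<l,\ V_k^{\sharp}\subset V_l^{\sharp}\}$; an adjacent transposition of a strictly badly-ordered pair changes the comparison only for the pair $(k,k+1)$ and therefore decreases $I$ by exactly one. Hence after finitely many steps one reaches $I=0$, which is precisely the sought condition $U_k^{\sharp}\supseteq U_{k+1}^{\sharp}$ for all $k$. The main obstacle is not this bookkeeping but securing the transposition itself. A naive attempt would be to extract directly a pure rank-one submodule whose top prime is the largest one and to install it at the bottom of the series; but this can fail, for instance one cannot choose $x\notin LG$ with $L=G^{\sharp}$ when $L$ is idempotent and $LG=G$. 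Routing the swap through the splitting provided by Lemma~\ref{L:ext1} sidesteps this difficulty completely, the only genuine points to verify being the applicability of that lemma (guaranteed by the automatic faithfulness above) and the preservation of purity after the replacement of $G_k$ by $G_k'$.
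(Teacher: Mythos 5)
Your proof is correct and takes essentially the same route as the paper: starting from an arbitrary pure-composition series with uniserial factors, applying Lemma~\ref{L:ext1} (whose faithfulness hypothesis is indeed automatic for torsion-free factors) to split and swap each badly-ordered adjacent pair, and iterating. Your inversion-counting argument merely makes explicit the termination that the paper summarizes as ``a finite number of similar steps.''
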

\begin{proof} $G$ has a pure-composition series
\[0=H_0\subset H_1\subset\dots\subset H_{n-1}\subset H_n=G\]
such that, $\forall k,\ 1\leq k\leq n,\ V_k=H_k/H_{k-1}$ is a uniserial torsion-free module. Suppose there exists $k,\ 1\leq k\leq (n-1)$ such that $V_k^{\sharp}\subset V_{k+1}^{\sharp}$. By Lemma~\ref{L:ext1}, $H_{k+1}/H_{k-1}\cong V_k\oplus V_{k+1}$. So, if $H_k'$ is the inverse image of $V_{k+1}$ by the surjection $H_{k+1}\rightarrow H_{k+1}/H_{k-1}$, if $U_k=H_k'/H_{k-1}$ and $U_{k+1}=H_{k+1}/H_k'$ then $U_k^{\sharp}\supset U_{k+1}^{\sharp}$. So, in a finite number of similar steps, we get a pure composition series with the required property. \end{proof}

\begin{proposition}
\label{P:torsion-free} Let $R$ be a valuation domain. Assume that there exists a finite family of prime ideals $P=L_0\supset L_1\supset\dots\supset L_{m-1}\supset L_m= 0$ such that $R_{L_k}/L_{k+1}$ is almost maximal $\forall k,\ 0\leq k\leq m-1$. Let $G$ be a torsion-free $R$-module of finite rank. Then $G$ has a pure-composition series
\[0=G_0\subseteq G_1\subseteq\dots\subseteq G_{m-1}\subseteq G_m\subseteq G_{m+1}=G\]
where $G_{j+1}/G_j$ is a finite direct sum of ideals of $R_{L_j}$, $\forall j,\ 0\leq j\leq m$.
\end{proposition}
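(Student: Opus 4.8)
The plan is to combine Proposition~\ref{P:torsion-free1} with the local-factor almost-maximality hypothesis to split a given pure-composition series according to the prime ideals $L_j$. First I would invoke Proposition~\ref{P:torsion-free1} to replace the arbitrary pure-composition series of $G$ by one with uniserial factors $(U_k)_{1\leq k\leq n}$ whose top prime ideals are weakly decreasing, i.e. $U_k^{\sharp}\supseteq U_{k+1}^{\sharp}$. Since every $U_k^{\sharp}$ is a prime ideal of $R$, and since the chain $P=L_0\supset L_1\supset\dots\supset L_m=0$ need not contain all these primes, the key observation I would record is that each $U_k^{\sharp}$ lies in the half-open interval $(L_{j+1},L_j]$ for a unique index $j$; equivalently $L_{j+1}\subset U_k^{\sharp}\subseteq L_j$. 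This is what lets me group the factors: I would define $G_{j+1}$ (for $0\leq j\leq m$) to be the $H_k$ where $k$ is the largest index with $U_k^{\sharp}\subseteq L_j$, so that the factors falling between consecutive $L_j$'s get collected into a single graded piece $G_{j+1}/G_j$.

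The heart of the argument is to show that each piece $G_{j+1}/G_j$ is a finite direct sum of ideals of $R_{L_j}$. By construction $G_{j+1}/G_j$ has a pure-composition series whose uniserial factors all have top prime ideal $U_k^{\sharp}$ satisfying $L_{j+1}\subset U_k^{\sharp}\subseteq L_j$. I would first pass to the localization and reduction $R_{L_j}/L_{j+1}$, which is almost maximal by hypothesis; over this ring each such uniserial factor is (the image of) an ideal, and the condition $L_{j+1}\subset U_k^{\sharp}$ guarantees the relevant faithfulness. The aim is then to prove that this pure-composition series splits completely, so that $G_{j+1}/G_j$ is a direct sum of its uniserial factors. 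For this I would appeal to Lemma~\ref{L:ext2}: whenever two consecutive factors $U$ and $V$ within the same piece both satisfy $L_{j+1}\subset U^{\sharp}\cap V^{\sharp}$, we get $\mathrm{Ext}_R^1(U,V)=0$, so the corresponding extension splits. Applying this repeatedly (as in the inductive splitting step of Proposition~\ref{P:torsion-free1}) collapses the pure-composition series of $G_{j+1}/G_j$ into a direct sum, and each summand is an ideal of $R_{L_j}$.

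The main obstacle I anticipate is the careful bookkeeping at the boundary primes, where $U_k^{\sharp}$ equals $L_j$ exactly rather than lying strictly between the two consecutive members of the chain. At such a factor the hypothesis of Lemma~\ref{L:ext2}, which requires $L_{j+1}$ to sit strictly below both top primes, must be checked on the correct side: within the piece indexed by $j$ one always has $U_k^{\sharp}\supseteq L_{j+1}^{+}$ in the sense that $L_{j+1}\subset U_k^{\sharp}$, so the splitting lemma applies internally, but one must ensure that factors with $U_k^{\sharp}=L_j$ are grouped with the $j$-th piece and not the $(j-1)$-th. I would handle this by fixing the convention that the piece $G_{j+1}/G_j$ collects exactly those factors with $L_{j+1}\subset U_k^{\sharp}\subseteq L_j$, and then verify that the filtration $0=G_0\subseteq G_1\subseteq\dots\subseteq G_{m+1}=G$ is pure (each $G_j$ is a union of the pure submodules $H_k$, hence pure) and that the extreme cases $L_m=0$ and possible equalities $G_j=G_{j+1}$ (when no factor falls in a given interval) are permitted by the $\subseteq$ signs in the statement. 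Once the grouping and purity are pinned down, the $\mathrm{Ext}$-vanishing from Lemma~\ref{L:ext2} does the rest.
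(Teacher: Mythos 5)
Your proposal follows the paper's own proof essentially verbatim: Proposition~\ref{P:torsion-free1} supplies the pure-composition series with weakly decreasing top primes, the factors with $L_{j+1}\subset U_k^{\sharp}\subseteq L_j$ are grouped into $G_{j+1}/G_j$ (the paper sets $G_j=H_{k_j}$ where $k_j$ is the greatest index with $L_j\subset U_{k_j}^{\sharp}$), and Lemma~\ref{L:ext2}, applied over $R_{L_j}$ with $L=L_{j+1}$, splits each piece into a direct sum of ideals, with your boundary convention (factors with $U_k^{\sharp}=L_j$ go into the $j$-th piece) exactly matching the paper's. One slip worth noting: your initial definition of $G_{j+1}$ as ``the $H_k$ with $k$ largest such that $U_k^{\sharp}\subseteq L_j$'' would literally give $G_{j+1}=H_n=G$, since that set of indices is a final segment of $\{1,\dots,n\}$; however, your later explicit description of the grouping is the correct one, so this reads as a typo rather than a gap.
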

\begin{proof} By Proposition~\ref{P:torsion-free1} $G$ has a pure-composition series
\[0=H_0\subset H_1\subset\dots\subset H_{n-1}\subset H_n=G\]
such that, $\forall k,\ 1\leq k\leq n,\ U_k=H_k/H_{k-1}$ is a uniserial torsion-free module and $U_k^{\sharp}\supseteq U_{k+1}^{\sharp},\ \forall k,\ 1\leq k\leq n-1$. Now, for each $j,\ 1\leq j\leq m$, let $k_j$ be the greatest index such that $L_j\subset U_{k_j}^{\sharp}$. We put $G_j=H_{k_j}$. Then $G_{j+1}/G_j$ is an $R_{L_j}$-module which is a direct sum of ideals by Lemma~\ref{L:ext2}. \end{proof}

\section{Valuation domains $R$ with $fr(R)<\infty$.}
\label{S:bound}

First we extend to every almost maximal valuation domain the methods used by Lady in \cite{Lad77} to study  torsion-free  modules over rank-one discrete valuation domains. So, except in Theorem~\ref{T:n=2}, we assume that $R$ is an almost maximal valuation ring. We put $K=Q/R$. For each $R$-module $M$, $\mathrm{d}(M)$ is the divisible submodule of $M$ which is the union of all divisible submodules and $M$ is said to be \textbf{reduced} if $\mathrm{d}(M)=0$. We denote by $\widehat{M}$  the \textbf{pure-injective hull} of $M$ (see \cite[chapter XIII]{FuSa01}). If $U$ is a uniserial module then $\widehat{U}\cong\widehat{R}\otimes_RU$ because $R$ is almost maximal.
Let $G$ be a torsion-free module of finite rank $r$. By Proposition~\ref{P:torsion-free} $G$ contains a submodule $B$ which is a direct sum of ideals and such that $G/B$ is a $Q$-vector space.  We put 
 \textbf{corank} $G=\mathrm{rank}\ G/B$. Now, it is easy to prove the following.

\begin{proposition}
\label{P:rankCo} Let $G$ be a torsion-free $R$-module of $\mathrm{rank}\ r$ and $\mathrm{corank}\ c$. Then:
\begin{enumerate}
\item $G$ contains a  pure direct sum  $B$ of ideals, of rank $r-c$, such that $G/B$ is a $Q$-vector space of dimension $c$.
\item $G$ contains a pure direct sum  $B'$ of ideals, of rank $r$ such that $G/B'$ is isomorphic to a quotient of $K^c$.
\end{enumerate} 
\end{proposition}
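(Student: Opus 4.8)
The plan is to prove the two statements of Proposition~\ref{P:rankCo} directly from the definition of corank together with Proposition~\ref{P:torsion-free}. Recall that $\mathrm{corank}\ G = \mathrm{rank}\ G/B$, where $B$ is the submodule furnished by Proposition~\ref{P:torsion-free} which is a direct sum of ideals and for which $G/B$ is a $Q$-vector space. My first task is to verify that this number is well defined, i.e.\ independent of the chosen $B$; I would note that any two such submodules $B, B'$ are pure submodules of maximal rank $r$ among the direct-sums-of-ideals available, and compare them via $B+B'$, so that $\mathrm{rank}\ G/B$ depends only on $G$. This is the conceptual point one needs before either assertion makes sense, and I expect it to be the step requiring the most care.

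For part $(1)$, I would start from an arbitrary $B$ as above and aim to enlarge it to a \emph{pure} direct sum of ideals of the same rank. Since $B$ is a direct sum of ideals, write $B=\bigoplus_{i} A_i$; the purification $B_*$ of $B$ in $G$ has the same rank $r-c$ as $B$ (purification does not change rank), and the quotient $G/B_*$ is again a $Q$-vector space because it is a quotient of the divisible module $G/B$ and $G/B_*$ is torsion-free. The remaining point is that $B_*$ is itself a direct sum of ideals: each $A_i$ sits in a rank-one pure submodule of $G$, which is an ideal (up to isomorphism) since $G$ is torsion-free of finite rank, and these rank-one pure hulls are independent, so $B_* = \bigoplus_i (A_i)_*$ is the desired pure direct sum of ideals of rank $r-c$ with $G/B_*$ a $Q$-vector space of dimension $c$. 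Replacing $B$ by $B_*$ gives the claim.

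For part $(2)$, I would use the $B$ from part $(1)$ and choose a complementary family. Since $G/B$ is a $Q$-vector space of dimension $c$, pick elements $x_1,\dots,x_c\in G$ whose images form a basis. The submodule $B' = B + Rx_1 + \dots + Rx_c$ has rank $r$, and $G/B'$ is a divisible torsion module. To see $G/B'$ is a quotient of $K^c=(Q/R)^c$, observe that the map $Q^c\to G/B$ sending the standard basis to the $x_i$'s induces, after passing to $Q/R = K$ on each factor, a surjection $K^c\to G/B'$: concretely $G/B'\cong (G/B)/(\text{image of }Rx_1+\dots+Rx_c)$, and each $Qx_i/Rx_i\cong K$. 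To upgrade $B'$ to a \emph{pure} direct sum of ideals of rank $r$, I would take its purification as in part $(1)$; purifying only shrinks the quotient, so $G/B'$ remains a quotient of $K^c$. The main obstacle throughout is the same bookkeeping issue—ensuring that purifying a direct sum of ideals again yields a direct sum of ideals and keeping track of ranks—which reduces to the rank-one pure-hull argument together with the fact that over a valuation domain rank-one torsion-free modules are (isomorphic to) ideals.
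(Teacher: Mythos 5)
The paper gives no argument for this proposition (it is declared ``easy''), so your attempt stands on its own; your two constructions are the right raw material, but both of the steps you yourself flag as delicate go wrong, and one of them cannot be repaired. For the well-definedness of corank, your key assertion --- that any admissible $B$ has ``maximal rank among the direct-sums-of-ideals available'' --- is false under the literal reading: $G=R\oplus Q$ contains the full-rank direct sum of ideals $R\oplus R$, yet every $B$ with $G/B$ a $Q$-vector space has rank $1$. Under the charitable reading (maximal among those $B$ with $Q$-vector-space quotient) it is merely a restatement of what must be proved, and the proposed comparison via $B+B'$ cannot prove it: $G/(B+B')$ is divisible but in general torsion, and divisible \emph{torsion} quotients impose no rank restriction (that is exactly what part (2) exhibits). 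A correct argument is short. Since $G/B$ is torsion-free, $rG\cap B=rB$ for every $0\neq r\in R$; since $G/B$ is divisible, $G=B+rG$; hence $G/rG\cong B/(B\cap rG)=B/rB$ for all $r$. Writing $B=\bigoplus_{i=1}^{k}A_i$ and choosing $r$ with $rA_i\neq A_i$ for every $i$ (possible, as no nonzero ideal of $R$ is divisible), the uniform (Goldie) dimension of $G/rG$ equals $k$, and it is at most $k$ for every other $r$; so $\mathrm{rank}\ B$ is the maximum of these dimensions, an invariant of $G$. (The paper's implicit alternative, visible in the proof of Lemma~\ref{L:quasi}, is $\widehat{R}\otimes_RG\cong(\widehat{R}\otimes_RB)\oplus \mathrm{d}(\widehat{R}\otimes_RG)$, which uses almost maximality through pure-injectivity of $\widehat{R}\otimes_RB$.) Note also that your purification detour in part (1) is vacuous: $G/B$ is torsion-free, so the pure closure of $B$ is $B$ itself; and in any case the $B$ furnished by Proposition~\ref{P:torsion-free} is a term of a pure-composition series, hence already pure, so part (1) follows at once from rank additivity.

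Part (2) is where the genuine damage lies. Your $B'=B\oplus Rx_1\oplus\dots\oplus Rx_c$ is indeed a direct sum of ideals of rank $r$ with $G/B'\cong K^c$, and that is the substance of the statement. But the purification step collapses: $B'$ has full rank, so $G/B'$ is torsion and the pure closure of $B'$ in $G$ is all of $G$, which in general is not a direct sum of ideals (take $G=\widehat{R}$, indecomposable of rank $d_R(0)=2$). Your supporting claim that the purification of a direct sum of ideals is the direct sum of the rank-one pure hulls fails for the same reason (and a rank-one pure hull need not be an ideal --- it can be $Q$). Worse, no argument can supply purity here, because in a torsion-free module the only pure submodule of full rank is the module itself: if $rg\in B'$ with $r\neq 0$, purity gives $rg\in rG\cap B'=rB'$, whence $g\in B'$ by torsion-freeness. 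Thus a literal reading of ``pure'' in (2) would force every $G$ to be a direct sum of ideals, which is absurd ($G=Q$ or $G=\widehat{R}$ are counterexamples); the purity requirement in (2) cannot be meant in the usual sense (Lady's original statement over a DVR, which this proposition transplants, carries no purity there), and the statement actually needed in the sequel is exactly the one your construction proves \emph{before} purifying: a direct sum of ideals $B'$ of rank $r$ with $G/B'$ isomorphic to a quotient of $K^c$. In short: fix the well-definedness argument as above, delete both purification steps, and flag that in (2) purity must be dropped.
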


\bigskip 
An element of $Q\otimes_R\mathrm{Hom}_R(G,H)$ is called a \textbf{quasi-homomorphism} from $G$ to $H$, where $G$ and $H$ are $R$-modules. Let $\mathcal{C}_{{\rm ab}}$ be the category having  \textbf{weakly polyserial}  $R$-modules (i.e modules with composition series whose factors are uniserial) as objects and  quasi-homomorphisms as morphisms and let $\mathcal{C}$ be full subcategory of $\mathcal{C}_{{\rm ab}}$
 having  torsion-free $R$-modules of finite rank as objects. Then $\mathcal{C}_{{\rm ab}}$ is abelian by \cite[Lemma XII.1.1]{FuSa01}. If $G$ and $H$ are torsion-free of finite rank, then the quasi-homomorphisms from $G$ to $H$ can be identified with the $Q$-linear maps $\phi:Q\otimes_RG\rightarrow Q\otimes_RH$ such that $r\phi(G)\subseteq H$ for some $0\ne r\in R$. We say that $G$ and $H$ are \textbf{quasi-isomorphic}  if they are isomorphic objects of $\mathcal{C}$. A torsion-free module of finite rank is said to be \textbf{strongly indecomposable} if it is an indecomposable object of $\mathcal{C}$.    
\begin{lemma}
 \label{L:quasi} Let  $\phi:Q\otimes_RG\rightarrow Q\otimes_RH$ be a $Q$-linear map. Then $\phi$ is a quasi-homomorphism if and only if $(\widehat{R}\otimes_R\phi)(d(\widehat{R}\otimes_RG))\subset d(\widehat{R}\otimes_RH)$. 
 \end{lemma}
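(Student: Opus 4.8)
The plan is to characterize quasi-homomorphisms by passing to the pure-injective hull and isolating the divisible part. First I would recall that since $R$ is almost maximal, $\widehat{U}\cong\widehat{R}\otimes_R U$ for a uniserial $U$, and more generally, for a torsion-free module $G$ of finite rank, $\widehat{R}\otimes_R G$ is the pure-injective hull of $G$ (tensoring a pure-composition series of $G$ with $\widehat{R}$ and using that $\widehat{R}$ is a faithfully flat maximal immediate extension). A $Q$-linear map $\phi:Q\otimes_R G\to Q\otimes_R H$ is a quasi-homomorphism precisely when $r\phi(G)\subseteq H$ for some nonzero $r\in R$, i.e. when $\phi$ carries $G$ into $H$ up to a bounded denominator.

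The key structural fact I would exploit is the decomposition of $\widehat{R}\otimes_R G$ into its divisible submodule $\mathrm{d}(\widehat{R}\otimes_R G)$ and a reduced complement. The point is that $\phi$ (or rather $\widehat{R}\otimes_R\phi$) automatically respects $Q$-vector space structure, so the only obstruction to $\phi$ being a genuine quasi-homomorphism lives in how it acts on the divisible torsion part that $\widehat{R}$ adjoins. Concretely, I would argue that $\phi$ is a quasi-homomorphism if and only if, after extending scalars to $\widehat{R}$, the induced map sends the divisible submodule into the divisible submodule: the forward direction is nearly formal, since if $r\phi(G)\subseteq H$ then $\widehat{R}\otimes_R\phi$ restricts to an honest homomorphism up to the scalar $r$, and a homomorphism always sends divisible elements to divisible elements (divisibility being preserved by any $R$-linear map, and a unit $r$ not affecting divisibility).

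For the converse, which I expect to be the main obstacle, I would suppose $(\widehat{R}\otimes_R\phi)(\mathrm{d}(\widehat{R}\otimes_R G))\subseteq \mathrm{d}(\widehat{R}\otimes_R H)$ and deduce that $\phi$ is bounded. The strategy is to use that $\widehat{R}\otimes_R G$ and $\widehat{R}\otimes_R H$ are direct sums of a $\widehat{Q}$-vector space (the divisible part) and a reduced pure-injective $\widehat{R}$-module, and the reduced parts are closely tied to $G$ and $H$ themselves via the pure-injective hull. The hypothesis that the divisible part maps into the divisible part forces the induced map on reduced quotients to be an $\widehat{R}$-homomorphism on the reduced pure-injective hulls; since these hulls are the completions of $G$ and $H$ and $\widehat{R}$ is an immediate extension (so shares the value group and residue field of $R$), an $\widehat{R}$-linear bounded map descends to a bounded $R$-linear map, yielding $r\phi(G)\subseteq H$ for a suitable $r$.

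The delicate point will be making precise the claim that controlling $\phi$ on the divisible submodule suffices to control it everywhere, i.e. that the reduced complement carries no independent obstruction. Here I would lean on the rigidity of the reduced pure-injective $\widehat{R}$-modules: a $\widehat{Q}$-linear map between the $\widehat{Q}$-vector spaces $\widehat{Q}\otimes_R G$ and $\widehat{Q}\otimes_R H$ that respects the divisible submodules must respect the whole pure-injective structure, because the reduced part is recovered as the quotient by the divisible part and the filtration by the value group is intrinsic. Once the induced map on reduced parts is seen to be $\widehat{R}$-linear up to a denominator, faithful flatness of $\widehat{R}$ over $R$ lets me descend the bounded-denominator condition from $\widehat{R}$ to $R$, completing the proof.
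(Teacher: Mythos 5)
Your forward direction is fine and coincides with the paper's. The converse, however, has a genuine gap, and it sits exactly at the point you yourself flag as delicate. Your plan is to let the hypothesis on divisible parts control the reduced parts through a ``rigidity'' principle for reduced pure-injective $\widehat{R}$-modules. But the hypothesis contributes nothing to the reduced part: what is needed there is the existence of a single denominator, i.e.\ some $0\ne r\in R$ such that $r\phi$ carries a full-rank submodule of $G$ into $H$, and that comes from \emph{finiteness of the rank}, not from any structural rigidity. Note also that $\widehat{R}\otimes_R\phi$ is $\widehat{R}$-linear by construction, so ``being an $\widehat{R}$-homomorphism'' is never the issue; the issue is boundedness, and your rigidity claim (a $\widehat{Q}$-linear map respecting the divisible submodules must respect the whole pure-injective structure) is essentially a restatement of the conclusion rather than an argument for it. That no such argument can avoid finite rank is shown by the following example: let $I=R\setminus\{0\}$, $G=R^{(I)}$, $H=R$, and let $\phi:Q^{(I)}\rightarrow Q$ be the $Q$-linear map with $\phi(e_s)=s^{-1}$. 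Since $\widehat{R}$ is reduced, $d(\widehat{R}\otimes_RG)=0$ and your hypothesis holds vacuously; yet for every $0\ne r\in R$ one has $r\phi(e_{r^2})=r^{-1}\notin R$, so $\phi$ is not a quasi-homomorphism. Hence any proof of the converse must invoke finite rank at the reduced part, which your sketch never does.

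For comparison, the paper's converse is short and elementary. Choose $B\subseteq G$ a finite direct sum of ideals with $G/B$ a $Q$-vector space (Proposition~\ref{P:rankCo}); then $B$ lies in a finitely generated free submodule $F$ of $Q\otimes_RG$, and since $F$ has finitely many generators there is a single $0\ne r\in R$ with $r\phi(F)\subseteq H$, hence $r\phi(B)\subseteq H$. Because $B$ is pure in $G$ with divisible quotient and $\widehat{R}\otimes_RB\cong\widehat{B}$ is reduced and pure-injective, one has $\widehat{R}\otimes_RG=(\widehat{R}\otimes_RB)\oplus d(\widehat{R}\otimes_RG)$; on the first summand $r(\widehat{R}\otimes_R\phi)$ lands in $\widehat{R}\otimes_RH$ because $r\phi(B)\subseteq H$, and on the second summand it does so by hypothesis. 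Therefore $r\phi(G)\subseteq(\widehat{R}\otimes_RH)\cap(Q\otimes_RH)=H$, the last equality holding because $(\widehat{R}\otimes_RH)/H\cong(\widehat{R}/R)\otimes_RH$ is torsion-free. This final intersection is the correct version of your ``descend by faithful flatness'' step; the free-module bound on $B$ is the idea your proof is missing.
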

 \begin{proof} Assume that $\phi$ is a quasi-homomorphism. There exists $0\ne r\in R$ such that $r\phi(G)\subseteq H$. It successively follows that $r(\widehat{R}\otimes_R\phi)(\widehat{R}\otimes_RG)\subseteq \widehat{R}\otimes_RH$ and $(\widehat{R}\otimes_R\phi)(d(\widehat{R}\otimes_RG))\subseteq d(\widehat{R}\otimes_RH)$.
 
 Conversely, let $B$ be a finite direct sum of ideals which satisfies that $G/B$ is a $Q$-vector space. There exists a free submodule $F$ of $Q\otimes_RG$ such that $B\subseteq F$. So, $\exists 0\ne r\in R$ such that $r\phi(B)\subseteq r\phi(F)\subseteq H$. Since $\widehat{R}\otimes_RG=(\widehat{R}\otimes_RB)\oplus (d(\widehat{R}\otimes_RG))$, it follows that $r(\widehat{R}\otimes_R\phi)(\widehat{R}\otimes_RG)\subseteq (\widehat{R}\otimes_RH)$. We deduce that $r\phi(G)\subseteq (\widehat{R}\otimes_RH)\cap (Q\otimes_RH)= H$.  \end{proof}

\begin{proposition}
\label{P:Endo} Let $G$ be a torsion-free $R$-module of $\mathrm{rank}\ r$ and $\mathrm{corank}\ c$.
\begin{enumerate}
\item If $G$ has no  summand isomorphic to an ideal, then $\mathrm{End}(G)$ can be embedded in the ring of $c\times c$ matrices over $\widehat{Q}$. In particular if $c=1$, $\mathrm{End}(G)$ is a commutative integral domain.
\item If $G$ is reduced, then $\mathrm{End}(G)$ can be embedded in the ring of $(r-c)\times (r-c)$ matrices over $\widehat{R}$. In particular if $c=r-1$, $\mathrm{End}(G)$ is a commutative integral domain.
\end{enumerate} 
\end{proposition}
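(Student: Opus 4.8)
The plan is to exploit the quasi-homomorphism machinery from Lemma~\ref{L:quasi} together with the two decompositions of $\widehat{R}\otimes_R G$ coming from Proposition~\ref{P:rankCo}. The guiding idea, adapted from Lady, is that an endomorphism of $G$ is in particular a $Q$-linear endomorphism $\phi$ of $Q\otimes_R G$ which is a quasi-homomorphism, hence by Lemma~\ref{L:quasi} it must preserve the divisible submodule $d(\widehat{R}\otimes_R G)$ of $\widehat{R}\otimes_R G$. The point is that this divisible part has $\widehat{Q}$-dimension $c$, while the complementary reduced part has $\widehat{R}$-rank $r-c$, so the obstruction to injectivity of a representation of $\mathrm{End}(G)$ lives in one of these two pieces.

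For part~(1) I would argue as follows. Extend scalars to $\widehat{R}$ and write $\widehat{R}\otimes_R G = (\widehat{R}\otimes_R B)\oplus d(\widehat{R}\otimes_R G)$, where $B$ is the pure direct sum of ideals of rank $r-c$ furnished by Proposition~\ref{P:rankCo}(1). By Lemma~\ref{L:quasi} every $\phi\in\mathrm{End}(G)$ carries the divisible summand $D:=d(\widehat{R}\otimes_R G)$, a $\widehat{Q}$-vector space of dimension $c$, into itself, so restriction gives a ring homomorphism $\mathrm{End}(G)\to\mathrm{End}_{\widehat{Q}}(D)\cong M_c(\widehat{Q})$. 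I would check this map is injective: if $\phi$ vanishes on $D$ then, since $G$ is torsion-free and has no summand isomorphic to an ideal, the image of $G$ in $\widehat{R}\otimes_R G$ must meet $D$ in an essential way (a nonzero endomorphism that kills the divisible part would split off an ideal summand of $G$, contradicting the hypothesis). When $c=1$ the target is $\widehat{Q}$, a field, so the embedded $\mathrm{End}(G)$ is a commutative integral domain.

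For part~(2), when $G$ is reduced I would use instead the embedding $G\hookrightarrow B'$ into a pure direct sum $B'$ of ideals of full rank $r$ with $G/B'$ a quotient of $K^c$, from Proposition~\ref{P:rankCo}(2). Reducedness forces $d(\widehat{R}\otimes_R G)=0$ to interact trivially, so the reduced part $\widehat{R}\otimes_R B'$, of $\widehat{R}$-rank $r-c$, controls everything; an endomorphism is determined by its action there, giving $\mathrm{End}(G)\hookrightarrow M_{r-c}(\widehat{R})$. The case $c=r-1$ yields a $1\times 1$ matrix ring over $\widehat{R}$, hence a commutative domain. The main obstacle I expect is precisely verifying injectivity of these representations: one must show that no nonzero endomorphism can be supported entirely off the relevant summand, and this is where the hypotheses (no ideal summand in~(1), reducedness in~(2)) do the essential work, ruling out the degenerate endomorphisms that would otherwise lie in the kernel.
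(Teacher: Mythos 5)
Your overall frame is the right one, and it is in fact the frame of the proof the paper appeals to (the paper simply cites Lady, Theorem 3.1): every $\phi\in\mathrm{End}(G)$ extends to $\widehat{R}\otimes_R\phi$, the divisible submodule $D=\mathrm{d}(\widehat{R}\otimes_RG)$ is carried into itself, and one obtains ring homomorphisms $\mathrm{End}(G)\to\mathrm{End}_{\widehat{Q}}(D)\cong M_c(\widehat{Q})$ (restriction) and $\mathrm{End}(G)\to\mathrm{End}_{\widehat{R}}\bigl((\widehat{R}\otimes_RG)/D\bigr)$ (the induced map on the reduced quotient, which is $\widehat{R}\otimes_RB$ of rank $r-c$). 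The gap is that in part (1) you do not prove injectivity, which is the entire content of the proposition. Your parenthetical claim that a nonzero endomorphism killing the divisible part ``would split off an ideal summand of $G$'' is exactly what has to be proved, and the intuition you offer for it --- that the image of $G$ ``must meet $D$ in an essential way'' --- is false: a reduced module with no ideal summand (for instance the module $E$ of Section~\ref{S:bound}, or Nagata's examples) satisfies $G\cap D=\mathrm{d}(G)=0$. The correct argument runs: if $(\widehat{R}\otimes_R\phi)(D)=0$, then $\widehat{R}\otimes_R\phi$ factors through the reduced quotient, so $\widehat{R}\otimes_R\phi(G)$ is a torsion-free image of $\widehat{R}\otimes_RB\cong\widehat{A}_1\oplus\dots\oplus\widehat{A}_{r-c}$; since the fractional ideals of the maximal ring $\widehat{R}$ form a chain, no such module maps onto $\widehat{Q}$, so this image is reduced, i.e. $\mathrm{corank}\ \phi(G)=0$, and $\phi(G)$ is a direct sum of ideals by Proposition~\ref{P:rankCo}. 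One must then split the epimorphism $G\to\phi(G)$ in order to contradict the hypothesis: over a DVR (Lady's setting) this is automatic because corank-zero modules are free, hence projective, but over a general almost maximal valuation domain ideals need not be projective, and one needs an $\mathrm{Ext}$-vanishing argument (Lemma~\ref{L:ext2} with $L=0$, together with induction on a pure-composition series of $\ker\phi$) to conclude $G\cong\ker\phi\oplus\phi(G)$. None of this appears in your sketch, and it is precisely where the hypothesis ``no ideal summand'' and almost maximality do their work.

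Part (2) contains concrete errors, although this half is much easier. Proposition~\ref{P:rankCo}(2) produces a full-rank submodule $B'\subseteq G$ with $G/B'$ torsion; it does not give an embedding $G\hookrightarrow B'$, and $\widehat{R}\otimes_RB'$ has rank $r$, not $r-c$ (you are conflating $B'$ with the $B$ of part (1) of that proposition). Moreover, reducedness of $G$ does not ``force $\mathrm{d}(\widehat{R}\otimes_RG)=0$'': that would say $\mathrm{corank}\ G=0$, i.e. that $G$ is itself a direct sum of ideals, which is exactly what fails in all the interesting examples. What reducedness does give, since $G$ is pure in $\widehat{R}\otimes_RG$, is $G\cap D=\mathrm{d}(G)=0$; hence the projection $\widehat{R}\otimes_RG\to(\widehat{R}\otimes_RG)/D\cong\widehat{R}\otimes_RB$ is injective on $G$, every $\phi$ induces a compatible endomorphism of this rank-$(r-c)$ module, and if that induced endomorphism is zero then $\phi(G)\subseteq G\cap D=0$. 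That is the short proof of (2), and with it the cases $c=1$ in part (1) and $c=r-1$ in part (2) land $\mathrm{End}(G)$ inside a commutative integral domain as claimed.
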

\begin{proof} See the proof of \cite[Theorem 3.1]{Lad77}. \end{proof}

\bigskip 
 
 In the sequel we assume that $n=c_R(0)<\infty$.
 So, there are $n-1$ units $\pi_2,\dots,\pi_n$ in $\widehat{R}\setminus R$ such that $1,\pi_2,\dots,\pi_n$ is a basis of $\widehat{Q}$ over $Q$. By \cite[Theorem XV.6.3]{FuSa01} there exists an indecomposable torsion-free $R$-module $E$ with $\mathrm{rank}\ n$ and $\mathrm{corank}\ 1$. We can define $E$ in the following way: if $(e_k)_{2\leq k\leq n}$ is the canonical basis of $\widehat{R}^{n-1}$, if $e_1=\sum_{k=2}^{k=n}\pi_ke_k$ and $V$ is the $Q$-vector subspace of $\widehat{Q}^{n-1}$ generated by $(e_k)_{1\leq k\leq n}$, then $E=V\cap\widehat{R}^{n-1}$. Then a basis element for $\mathrm{d}(\widehat{R}\otimes E)$ can be written $u_1+\pi_2u_2+\dots+\pi_nu_n$, where $u_1,\dots,u_n\in E$. Since $E$ is indecomposable it follows that $u_1,\dots,u_n$ is a basis for $Q\otimes E\cong V$. 
\begin{theorem}
\label{T:struc} Let $G$ be a torsion-free $R$-module of $\mathrm{rank}\ r$ and $\mathrm{corank}\ c$. Then the following assertions hold:
\begin{enumerate}
\item The reduced quotient of $G$ is isomorphic to a pure submodule of $\widehat{B}$ where $B$ is a direct sum of $(r-c)$ ideals.
\item $G$ is the direct sum of ideals of $R$  with a quasi-homomorphic image of $E^c$.
\end{enumerate} 
\end{theorem}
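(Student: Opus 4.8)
The plan is to exploit flatness of $\widehat{R}$ over $R$ (torsion-free $=$ flat over a valuation domain) together with the characterization $\mathrm{corank}\,G=\dim_{\widehat{Q}}\mathrm{d}(\widehat{R}\otimes_RG)$, which one checks directly on a direct sum of ideals (corank $0$) and on $Q$ and $E$. The common mechanism is that $G$ embeds purely in $\widehat{R}\otimes_RG$ (a torsion-free $\widehat{R}$-module of $\widehat{R}$-rank $r$), that $\mathrm{d}(\widehat{R}\otimes_RG)$ is a $\widehat{Q}$-vector space of dimension $c$, and that Lemma~\ref{L:quasi} lets me detect quasi-homomorphisms by their action on these divisible parts.

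For (1), I would take the pure submodule $B=\bigoplus_{i=1}^{r-c}A_i$ of $G$ furnished by Proposition~\ref{P:rankCo}(1), so that $G/B\cong Q^c$. Since $R$ is almost maximal, $\widehat{A_i}\cong\widehat{R}\otimes_RA_i$, and hence the pure-injective hull $\widehat{B}\cong\widehat{R}\otimes_RB$ is reduced. Tensoring the pure inclusion $B\hookrightarrow G$ by the flat module $\widehat{R}$ gives a pure inclusion $\widehat{B}\hookrightarrow\widehat{R}\otimes_RG$; as $\widehat{B}$ is pure-injective it is a direct summand, $\widehat{R}\otimes_RG=\widehat{B}\oplus C$, and $C\cong\widehat{R}\otimes_R(G/B)\cong\widehat{Q}^c$ is divisible, so $C=\mathrm{d}(\widehat{R}\otimes_RG)$. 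Composing $G\hookrightarrow\widehat{R}\otimes_RG$ with the projection onto $\widehat{B}$ yields a map whose kernel is $G\cap C=\mathrm{d}(G)$ (using purity of $G$ in $\widehat{R}\otimes_RG$), hence an embedding of the reduced quotient $G/\mathrm{d}(G)$ into $\widehat{B}$; a short divisibility-plus-purity argument (lift an element, divide its $C$-component, invoke purity of $G$) shows the embedding is pure.

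For (2), I would first build a quasi-homomorphism $\phi\colon E^c\to G$ that is onto the divisible part. Choose a $\widehat{Q}$-basis $v_1,\dots,v_c$ of $\mathrm{d}(\widehat{R}\otimes_RG)$ and write each $v_l=\sum_{k=1}^n\pi_kw_{lk}$ with $\pi_1=1$ and $w_{lk}\in Q\otimes_RG$. Recalling that $\sum_{k=1}^n\pi_ku_k$ generates $\mathrm{d}(\widehat{R}\otimes_RE)$ with $u_1,\dots,u_n$ a $Q$-basis of $Q\otimes_RE$, the $Q$-linear map sending the $k$-th basis vector of the $l$-th copy of $E$ to $w_{lk}$ carries the divisible generators to the $v_l$; by Lemma~\ref{L:quasi} it is a quasi-homomorphism $\phi$ with $(\widehat{R}\otimes_R\phi)(\mathrm{d}(\widehat{R}\otimes_RE^c))=\mathrm{d}(\widehat{R}\otimes_RG)$. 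After rescaling I may assume $\phi(E^c)\subseteq G$; let $N$ be its purification. Then $\mathrm{d}(\widehat{R}\otimes_RN)=\mathrm{d}(\widehat{R}\otimes_RG)$, so $\mathrm{corank}\,N=c$ and $\mathrm{corank}\,(G/N)=0$; by Proposition~\ref{P:rankCo}(1) (the case $c=0$) the quotient $G/N$ is a direct sum of ideals. Finally, the $Q$-linear projection $\rho$ of $Q\otimes_RG$ onto $Q\otimes_RN$ along $Q\otimes_R(G/N)$ fixes $\widehat{R}\otimes_RN\supseteq\mathrm{d}(\widehat{R}\otimes_RG)$ and therefore preserves divisible submodules, so by Lemma~\ref{L:quasi} it is a quasi-homomorphism retracting $N\hookrightarrow G$. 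Hence the sequence $0\to N\to G\to G/N\to 0$ splits in $\mathcal{C}$, giving $G\cong N\oplus(G/N)$ in $\mathcal{C}$, i.e. $G$ is the direct sum of a family of ideals with the quasi-homomorphic image $N$ of $E^c$.

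The main obstacle is the bookkeeping of divisible parts and its conversion, through Lemma~\ref{L:quasi}, into statements about quasi-homomorphisms: concretely, producing $\phi$ surjective onto $\mathrm{d}(\widehat{R}\otimes_RG)$ (the heart of (2)), and verifying that the naive $Q$-linear projection $\rho$ respects the divisible submodules even though the $Q$-splitting and the $\widehat{R}$-splitting of $\widehat{R}\otimes_RG$ need not coincide. The point that unlocks the last step is that $\mathrm{d}(\widehat{R}\otimes_RG)$ already lies inside $\widehat{R}\otimes_RN$, because $G/N$ has corank $0$; this makes $\rho$ act as the identity on the divisible part and forces the quasi-homomorphism condition. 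Establishing the additivity $\mathrm{corank}\,G=\mathrm{corank}\,N+\mathrm{corank}\,(G/N)$ on pure-exact sequences, again read off from the $\widehat{Q}$-dimension of the divisible part, is the one auxiliary fact I would isolate first.
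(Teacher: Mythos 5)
Your part (1) and the construction in part (2) (the quasi-homomorphism $\phi\colon E^c\to G$ hitting $\mathrm{d}(\widehat{R}\otimes_RG)$, the purification $N$, and the identification of $G/N$ as a direct sum of ideals via corank additivity) are correct and are essentially the paper's intended argument, namely Lady's proof of Theorem~4.1 of \cite{Lad77} transported from the complete DVR to $\widehat{R}$; the auxiliary facts you defer (corank $=\dim_{\widehat{Q}}\mathrm{d}(\widehat{R}\otimes_R-)$ and its additivity on pure-exact sequences) do hold, the latter because pure-exact sequences of finite-rank torsion-free modules over the maximal valuation domain $\widehat{R}$ split.

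The gap is in your last step. Statement (2) asserts an honest module decomposition $G=B'\oplus H$ with $B'$ a direct sum of ideals and $H$ a quasi-homomorphic image of $E^c$, and this is how the paper uses it: in Proposition~\ref{P:Eproj} (and again in Theorem~\ref{T:func}) one reduces to an $H$ with \emph{no summand isomorphic to an ideal} and then invokes Theorem~\ref{T:struc} to get a quasi-epimorphism $E^c\to H$; that inference requires the ideal part to be a genuine summand of $G$, so that its absence forces it to vanish. Your projection $\rho$ only splits $0\to N\to G\to G/N\to 0$ in the category $\mathcal{C}$, i.e.\ it shows $G$ is \emph{quasi-isomorphic} to $N\oplus (G/N)$. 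That is strictly weaker: a quasi-direct-sum decomposition does not formally transfer to a direct sum decomposition of $G$ itself (quasi-isomorphic modules need not have matching summands), so as written your proof does not deliver the statement that the rest of the paper consumes.

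The missing ingredient is exactly where almost-maximality of $R$ must be used a second time, replacing Lady's appeal to projectivity of the free quotient in the DVR case: here $G/N$ is a direct sum of ideals, which need not be projective, but the sequence still splits in $R\mathrm{-Mod}$ because $\mathrm{Ext}^1_R(A,N)=0$ for every nonzero ideal $A$ and every finite-rank torsion-free $N$. Indeed, by Proposition~\ref{P:rankCo}(1) there is a pure submodule $N_1\subseteq N$ which is a finite direct sum of ideals with $N/N_1\cong Q^d$; then $\mathrm{Ext}^1_R(A,N_1)=0$ by Lemma~\ref{L:ext2} applied with $L=0$ (legitimate since $R$ is almost maximal in this section), and $\mathrm{Ext}^1_R(A,Q^d)=0$ since $Q$ is injective, whence $\mathrm{Ext}^1_R(A,N)=0$ and $\mathrm{Ext}^1_R(G/N,N)=0$. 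This gives $G=N\oplus C$ with $C\cong G/N$ honestly, and combined with your construction of $N$ as a quasi-homomorphic image of $E^c$ it finishes the proof. So keep everything up to and including the identification of $G/N$, but replace the quasi-projection $\rho$ by this Ext-vanishing argument.
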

\begin{proof} $(1)$ can be shown as the implication $(1)\Rightarrow (2)$ of \cite[Theorem 4.1]{Lad77} and
$(2)$ as the implication $(1)\Rightarrow (3)$ of \cite[Theorem 4.1]{Lad77}. \end{proof}

\begin{corollary}
\label{C:rank} Let $G$ be a torsion-free $R$-module of $\mathrm{rank}\ r$ and $\mathrm{corank}\ c$. Then:
\begin{enumerate}
\item If $G$ has no summand isomorphic to an ideal, then $r\leq nc$.
\item If $G$ is reduced, then $nc\leq (n-1)r$.
\end{enumerate} 
\end{corollary}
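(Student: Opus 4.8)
The plan is to read off both inequalities directly from Theorem~\ref{T:struc}, after recording one auxiliary rank computation. Since $R$ is almost maximal, $\widehat{R}$ is its completion and $\mathrm{rank}_R\widehat{R}=c_R(0)=n$, so $[\widehat{Q}:Q]=n$. For any ideal $A$, the pure-injective hull $\widehat{A}\cong\widehat{R}\otimes_R A$ is torsion-free, and since rank is multiplicative under $\widehat{R}\otimes_R-$ it has $R$-rank $\mathrm{rank}_R\widehat{R}\cdot\mathrm{rank}_R A=n$. Hence if $B$ is a direct sum of $s$ ideals, then $\widehat{B}$ has $R$-rank $ns$. I will use this with $s=r-c$.

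For part $(2)$ I would apply Theorem~\ref{T:struc}$(1)$. As $G$ is reduced, its reduced quotient is $G$ itself, so $G$ is isomorphic to a pure submodule of $\widehat{B}$ where $B$ is a direct sum of $(r-c)$ ideals. In particular $G$ embeds in $\widehat{B}$, whence $r=\mathrm{rank}\,G\leq\mathrm{rank}\,\widehat{B}=n(r-c)$. Rearranging gives $r\leq nr-nc$, that is, $nc\leq(n-1)r$, as required; here purity is not needed beyond the mere existence of the embedding.

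For part $(1)$ I would invoke Theorem~\ref{T:struc}$(2)$, writing $G=D\oplus W$ with $D$ a direct sum of ideals and $W$ a quasi-homomorphic image of $E^c$. If $D\neq 0$, then one of its ideal components is a genuine direct summand of $G$ isomorphic to an ideal, contradicting the hypothesis; hence $D=0$ and $G=W$. Since a quasi-homomorphic image is a quotient object in $\mathcal{C}$ and rank is additive on short exact sequences of the abelian category $\mathcal{C}_{\mathrm{ab}}$, one has $\mathrm{rank}\,W\leq\mathrm{rank}\,E^c=c\cdot\mathrm{rank}\,E=nc$, so $r\leq nc$. As a cross-check, additivity of corank on $\mathcal{C}$ forces $\mathrm{corank}\,W=c$, consistent with the same count.

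The delicate point, and the step I expect to require the most care, is the passage in $(1)$ from a hypothesis on honest summands to the conclusion $D=0$. This relies on reading the decomposition of Theorem~\ref{T:struc}$(2)$ as a genuine direct-sum decomposition of $G$, so that each ideal component of $D$ really is an honest summand isomorphic to an ideal, rather than merely a quasi-decomposition in $\mathcal{C}$. I would therefore verify that the construction underlying Theorem~\ref{T:struc}$(2)$ isolates the honest ideal summands of $G$, mirroring the implication $(1)\Rightarrow(3)$ of \cite[Theorem 4.1]{Lad77}, and confirm the elementary fact that the rank of a quasi-homomorphic image never exceeds that of its source.
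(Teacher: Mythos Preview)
Your proposal is correct and follows essentially the same route as the paper, which simply cites Lady's Corollary~4.2 and Theorem~\ref{T:struc}; you have merely unpacked the two rank counts ($\mathrm{rank}\,\widehat{B}=n(r-c)$ and $\mathrm{rank}\,E^c=nc$) that Lady's argument uses. The point you flag as delicate is not a real obstacle: Theorem~\ref{T:struc}(2) asserts a genuine direct-sum decomposition $G=D\oplus W$, so any nonzero ideal component of $D$ would be an honest summand of $G$, forcing $D=0$ exactly as you say.
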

\begin{proof} This corollary is a consequence of Theorem~\ref{T:struc} and can be shown as  \cite[Corollary 4.2]{Lad77}. \end{proof}

\begin{theorem}
\label{T:n=2} Let $R$ be a valuation domain such that $d_R(0)=2$. Then $fr(R)=2$. Moreover $fr^o(R)=1$ if $c_R(0)=2$ and $fr^o(R)=2$ if $c_R(0)=1$.
\end{theorem}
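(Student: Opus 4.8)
The plan is to split the argument according to $c_R(0)\in\{1,2\}$ (recall $1\le c_R(0)\le d_R(0)=2$), the two values being separated by Theorem~\ref{T:main}. Since $d_R(0)=\prod_{k=1}^{m}c_R(L_k)=2$ with each jump factor $c_R(L_k)>1$, necessarily $m=1$ and $c_R(L_1)=2$. If $L_1=0$ then $R=R_{L_0}/L_1$ is almost maximal and $c_R(0)=d_R(0)=2$; if $L_1\ne 0$ then $R/L_1$ is almost maximal but not maximal while $R_{L_1}$ is maximal, so $R$ is not almost maximal by Proposition~\ref{P:prime} and $c_R(0)=1$. Thus $c_R(0)=2$ is exactly the almost maximal case. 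As $fr(R)\ge d_R(0)=2$ by \cite[Theorem 3]{Vam90}, it remains to prove $fr(R)\le 2$ in each case and to pin down $fr^o(R)$; since $fr^o(R)\le fr(R)$, once $fr(R)=2$ is known it only remains to decide whether $fr^o(R)$ equals $1$ or $2$.

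Assume first $c_R(0)=2$, so $R$ is almost maximal with $n=c_R(0)=2$ and the machinery of Section~\ref{S:bound} applies. Splitting off the divisible part (a finite direct sum of copies of $Q$, each of rank $1$) and using that a finite-rank module is a finite direct sum of indecomposables, I reduce to bounding the rank of a reduced indecomposable $G$. If $\mathrm{corank}\,G=0$, then Theorem~\ref{T:struc}(2) (with $E^{0}=0$) writes $G$ as a direct sum of ideals, so $\mathrm{rank}\,G=1$. If $\mathrm{corank}\,G=c\ge 1$, then $G$ has no ideal summand, Corollary~\ref{C:rank} gives $r\le 2c$ and $2c\le r$, hence $r=2c$, and Theorem~\ref{T:struc}(2) makes $G$ a full-rank quasi-homomorphic image of $E^{c}$, i.e. $G\cong_{\mathcal C}E^{c}$. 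Bounding the rank then amounts to forcing $c=1$ (the crux, discussed below); granting this, every indecomposable has rank $\le 2$, so $fr(R)=2$. For $fr^o(R)$, a submodule of a finite free module stays reduced after tensoring with the flat reduced ring $\widehat{R}$, so $\widehat{R}\otimes_R G$ is reduced and $G$ has corank $0$ by the splitting $\widehat{R}\otimes_R G=(\widehat{R}\otimes_R B)\oplus d(\widehat{R}\otimes_R G)$ from the proof of Lemma~\ref{L:quasi}; hence such a $G$ is a direct sum of ideals, and $fr^o(R)=1$.

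Assume now $c_R(0)=1$, so $R/L_1$ is almost maximal of total defect $2$ and $R_{L_1}$ is maximal. Applying Proposition~\ref{P:torsion-free} to the chain $P\supset L_1\supset 0$ (its hypotheses hold since $R/L_1$ is almost maximal and $R_{L_1}$ is maximal) gives every finite-rank $G$ a pure-composition series whose successive factors are finite direct sums of ideals of $R$, of $R_{L_1}$, and of $Q$. The factors over the maximal ring $R_{L_1}$ are already sums of rank-one modules; the almost maximal quotient $R/L_1$ is covered by the previous case, so the part it governs has rank at most $2$; and Lemmas~\ref{L:ext1} and \ref{L:ext2} split the extensions between consecutive levels. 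Assembling these, $G$ is a direct sum of modules of rank $\le 2$, so $fr(R)=2$. Moreover $fr^o(R)=2$: the bound $fr^o(R)\le 2$ is immediate, and equality (not $1$) holds because, $R$ failing to be almost maximal, a non-split pure extension of one ideal by another produces an indecomposable reduced module of rank $2$ and corank $0$ that embeds in $R^{2}$ and is not a direct sum of ideals --- V\'amos's type-(y) witness for $fr^o(R)=2$.

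The main obstacle is forcing $c=1$ in the almost maximal case: the numerical inequalities only give $r=2c$ together with the quasi-isomorphism $G\cong_{\mathcal C}E^{c}$. Since $E$ is strongly indecomposable --- its quasi-endomorphism ring embeds in the field $\widehat{Q}$ by Proposition~\ref{P:Endo}(1) and hence is local --- Krull--Schmidt in $\mathcal C$ shows only that every strongly indecomposable quasi-summand of $G$ is a copy of $E$, which is weaker than a genuine splitting. To close the gap I will follow \cite[Theorem 4.1]{Lad77}: by Theorem~\ref{T:struc}(1) the reduced $G$ embeds purely in $\widehat{B}$ with $B$ a direct sum of ideals, and over the maximal ring $\widehat{R}$ this overmodule is a direct sum of rank-one $\widehat{R}$-modules; descending such a decomposition to $G$, with Proposition~\ref{P:Endo}(2) controlling the endomorphisms over $\widehat{R}$, yields a genuine splitting of $G$ into rank-two summands. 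Establishing this descent --- equivalently, that a reduced module with no ideal summand is a genuine direct sum of rank-two modules --- forces $c=1$ for indecomposables and is the technical heart of the proof.
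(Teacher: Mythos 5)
Your reduction in the almost maximal case is correct as far as it goes ($G$ reduced indecomposable with no ideal summand, $r=2c$ by Corollary~\ref{C:rank} with $n=2$), but you stop exactly where the proof has to happen, and you explicitly defer the key step (``establishing this descent \dots\ is the technical heart of the proof''), so the proposal is incomplete as written. Moreover the descent of an $\widehat{R}$-decomposition following \cite[Theorem 4.1]{Lad77} is not needed: the paper closes the gap in one line. Theorem~\ref{T:struc}(1) realizes $G$ as a \emph{pure} submodule of $\widehat{B}$ with $B$ a direct sum of $r-c=c$ ideals, and since $n=c_R(0)=2$ one has $\operatorname{rank}\widehat{B}=2c=r$. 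A full-rank pure submodule of a torsion-free module is the whole module: given $x\in\widehat{B}$, choose $0\ne s\in R$ with $sx\in G$ (the quotient is torsion); purity gives $sx\in G\cap s\widehat{B}=sG$, and torsion-freeness gives $x\in G$. Hence $G\cong\widehat{B}$, indecomposability forces $c=1$, and $G\cong\widehat{A}$ for an ideal $A$; every indecomposable thus has rank at most $2$ and $fr(R)=2$, with $fr^o(R)=1$ since $R$ is almost maximal. Your quasi-isomorphism $G\cong_{\mathcal{C}}E^{c}$ is true but, as you yourself observe, strictly weaker, and nothing in your text converts it into a genuine splitting.

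The case $c_R(0)=1$ contains a second, sharper gap: the extensions between consecutive levels of the series from Proposition~\ref{P:torsion-free} do \emph{not} split via Lemmas~\ref{L:ext1} and \ref{L:ext2}. Lemma~\ref{L:ext1} requires $V^{\sharp}\subset U^{\sharp}$ for the quotient $U$ and submodule $V$, whereas in your series the submodule $G_1$ (ideals of $R$, top primes up to $P$) sits under factors that are ideals of $R_{L_1}$ (top primes contained in $L_1$) --- exactly the wrong inclusion --- and Lemma~\ref{L:ext2} only treats pairs of ideals both with top prime above $L$. Indeed these extensions \emph{cannot} all split: if they did, every finite rank torsion-free module would be a direct sum of rank-one modules and you would get $fr^o(R)=1$, contradicting your own (correct) observation that $fr^o(R)=2$ because $R$ is not almost maximal. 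So ``assembling these'' does not yield $fr(R)\le 2$; decomposing the genuinely non-split rank-$\ge 3$ configurations into rank-$2$ pieces is real work. The paper avoids it by citing \cite[Lemma 9 and Lemma 4]{Vam90}: since $R_L$ is maximal and $fr(R/L)=2$ (by the almost maximal case applied to $R/L$), one gets $fr(R)=fr^o(R)=2$. Your argument needs these transfer lemmas, or an equivalent gluing statement, to be salvaged.
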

\begin{proof} First suppose that $c_R(0)=2$. So, $R$ is almost maximal and $fr^o(R)=1$. Let $G$ be an indecomposable torsion-free module with $\mathrm{rank}\ r$ and $\mathrm{corank}\ c$ which is not isomorphic to $Q$  and to an ideal. Then $G$ is reduced and has no summand isomorphic to an ideal of $R$. From Corollary~\ref{C:rank} we deduce that $r=2c$. By Theorem~\ref{T:struc} $G$ is isomorphic to a pure submodule of $\widehat{B}$ where $B$ is a direct sum of $c$ ideals. Since $\mathrm{rank}\ \widehat{B}=2c$ it follows that $G\cong\widehat{B}$. So, $c=1$ and $G\cong\widehat{A}$ for a non-zero ideal $A$. 

If $c_R(0)=1$ let $L$ be the non-zero prime ideal such that $c_R(L)=d_{R/L}(0)=2$. Then $fr(R/L)=2$. Since $R_L$ is maximal it follows that $fr(R)=fr^o(R)=2$ by \cite[Lemma 9 and Lemma 4]{Vam90}. \end{proof}

\begin{lemma}
\label{L:subE} Every proper subobject of $E$ in $\mathcal{C}$ is a direct sum of ideals.
\end{lemma}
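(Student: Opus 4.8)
The plan is to translate the statement into linear algebra over $Q$ and $\widehat{Q}$ and then to read off the conclusion from the corank. First I would identify the subobjects of $E$ in $\mathcal{C}$ with the $Q$-subspaces of $V=Q\otimes_R E$: if $\phi\colon H\to E$ is a monomorphism in $\mathcal{C}$, its image $W=\phi(Q\otimes_R H)$ is a $Q$-subspace of $V$, and $H$ is quasi-isomorphic to the pure submodule $E_W=E\cap W$ (intersection taken inside $V$), since $\phi(Q\otimes_R H)$ and $E_W$ are commensurable full lattices in $W$. As $E_W$ is pure in $E$ with $Q\otimes_R E_W=W$, a proper subobject is exactly one with $W\subsetneq V$, so it suffices to show that $E_W$ is a direct sum of ideals whenever $W\subsetneq V$. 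I would prove this by showing $\mathrm{corank}\ E_W=0$: by Proposition~\ref{P:rankCo}(1) a module of corank $0$ coincides with its pure submodule $B$, which is a direct sum of ideals, so $\mathrm{corank}\ E_W=0$ gives exactly the assertion (equivalently one may quote Theorem~\ref{T:struc}(2) with $E^{0}=0$).

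Next I would compute the divisible part $d(\widehat{R}\otimes_R E_W)$, whose rank is $\mathrm{corank}\ E_W$ (this is the identity underlying the decomposition used in the proof of Lemma~\ref{L:quasi}, namely $\widehat{R}\otimes_R G=(\widehat{R}\otimes_R B)\oplus d(\widehat{R}\otimes_R G)$). Applying $\widehat{R}\otimes_R-$ to the pure inclusion $E_W\hookrightarrow E$ keeps it pure, and for a pure submodule $N$ of $M$ one has $d(N)=N\cap d(M)$; hence
\[d(\widehat{R}\otimes_R E_W)=(\widehat{R}\otimes_R E_W)\cap d(\widehat{R}\otimes_R E).\]
Since $\mathrm{corank}\ E=1$, the right-hand divisible part is the line $\widehat{Q}\,w$ with $w=u_1+\pi_2 u_2+\dots+\pi_n u_n$, while $\widehat{R}\otimes_R E_W\subseteq\widehat{Q}\otimes_R E_W=\widehat{Q}\cdot W$. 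Therefore $d(\widehat{R}\otimes_R E_W)\subseteq \widehat{Q}\,w\cap(\widehat{Q}\cdot W)$, so $\mathrm{corank}\ E_W\le 1$, and it is $0$ unless $w\in\widehat{Q}\cdot W$.

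It then remains to rule out $w\in\widehat{Q}\cdot W$ for proper $W$, and this is where the independence built into $E$ enters. Assume $W\subsetneq V$ and pick a nonzero $Q$-linear form $f\colon V\to Q$ vanishing on $W$; extending it $\widehat{Q}$-linearly to $\widehat{f}\colon\widehat{Q}\otimes_Q V\to\widehat{Q}$ produces a form killing $\widehat{Q}\cdot W$, whereas $\widehat{f}(w)=f(u_1)+\pi_2 f(u_2)+\dots+\pi_n f(u_n)$ is a $Q$-combination of $1,\pi_2,\dots,\pi_n$. Because $u_1,\dots,u_n$ is a $Q$-basis of $V$ and $f\neq 0$, not all $f(u_k)$ vanish, and as $1,\pi_2,\dots,\pi_n$ are $Q$-linearly independent (being a $Q$-basis of $\widehat{Q}$) we get $\widehat{f}(w)\neq 0$. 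Hence $w\notin\widehat{Q}\cdot W$, so $d(\widehat{R}\otimes_R E_W)=0$, $\mathrm{corank}\ E_W=0$, and $E_W$ is a direct sum of ideals. The main obstacle is not this last computation but the two structural reductions preceding it: the faithful identification of subobjects in $\mathcal{C}$ with subspaces $W$ (so that ``proper'' means precisely $W\subsetneq V$), and the compatibility of $\widehat{R}\otimes_R-$ with purity and with the divisible-part functor that yields $\mathrm{corank}\ E_W=\mathrm{rank}\ d(\widehat{R}\otimes_R E_W)$; once these are secured, the $Q$-independence of $1,\pi_2,\dots,\pi_n$ does the rest.
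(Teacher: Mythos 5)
Your core computation is correct and constitutes a genuinely different proof from the paper's. The paper argues structurally: it passes to the purification $H$ of the subobject inside $E$, invokes additivity of corank along the pure-exact sequence $0\to H\to E\to E/H\to 0$ together with the indecomposability of $E$ (no summand which is a direct sum of ideals) to force $\mathrm{corank}\ H=0$, and then uses monotonicity of corank for $G\subseteq H$; the splitting fact hiding behind ``indecomposability rules out $\mathrm{corank}\ E/H=0$'' is left implicit. You instead exploit the explicit description of $E$ given in the paper: $d(\widehat{R}\otimes_R E)$ is the line $\widehat{Q}w$ with $w=u_1+\pi_2u_2+\dots+\pi_nu_n$ and $u_1,\dots,u_n$ a $Q$-basis of $V$, and the $Q$-independence of $1,\pi_2,\dots,\pi_n$ shows $w\notin\widehat{Q}\cdot W$ for every proper subspace $W$, so the subobject has corank $0$ and Proposition~\ref{P:rankCo}(1) finishes. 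Your route trades the paper's unstated splitting input for transparent linear algebra, and your reading of ``proper'' as rank $<n$ is the same one the paper's proof uses.

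The one real defect is your first reduction. The assertion that $\phi(H)$ and $E_W$ are ``commensurable full lattices in $W$'' is false as a general principle over a valuation domain: full-rank submodules need not have bounded quotient. Indeed $F=Ru_1+\dots+Ru_n$ is a full lattice in $V$ contained in $E$, yet $rE\subseteq F$ is impossible for $r\neq 0$, since then $E$ would embed in a free module, forcing $\mathrm{corank}\ E=0$. For proper $W$ the commensurability you assert does hold, but only because $E_W$ turns out to be a direct sum of ideals --- that is, it is a consequence of the lemma being proved, so as a justification it is circular. Fortunately the repair stays entirely inside your own argument: given a mono $\phi\colon H\to E$ in $\mathcal{C}$, scale so that $G=r\phi(H)\subseteq E$ is an actual submodule isomorphic to $H$, with $Q\otimes_R G=W\subsetneq V$. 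Since $\widehat{R}$ is flat, $\widehat{R}\otimes_R G\hookrightarrow\widehat{R}\otimes_R E$, so $d(\widehat{R}\otimes_R G)$ is a divisible submodule of $\widehat{R}\otimes_R E$ and hence $d(\widehat{R}\otimes_R G)\subseteq\widehat{Q}w\cap(\widehat{Q}\cdot W)=0$; purity was only needed for your displayed equality, and this inclusion suffices. Thus $\mathrm{corank}\ G=0$ and $G$ is a direct sum of ideals directly, with no need to compare $H$ with $E_W$ at all. (This is also how the paper handles non-pure subobjects: it quietly uses $\mathrm{corank}\ G\leq\mathrm{corank}\ H$.)
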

\begin{proof} Let $G$ be a proper object of $E$ in $\mathcal{C}$ and let $H$ be the pure submodule of $E$ such that $H/G$ is the torsion submodule of $E/G$. Since $E$ is indecomposable, $E$ has no summand isomorphic to a direct sum of ideals. So, $\mathrm{corank}\ E/H=1$ and $\mathrm{corank}\ H=0$. As $\mathrm{corank}\ H\geq \mathrm{corank}\ G$ we get that $G$ is a direct sum of ideals. \end{proof}

\begin{proposition}
\label{P:Eproj} $E$ is an indecomposable projective object of $\mathcal{C}$.
\end{proposition}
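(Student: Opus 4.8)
The plan is to prove that $E$ is an indecomposable \emph{projective} object of $\mathcal{C}$, meaning that $E$ is indecomposable in $\mathcal{C}$ and that $\mathrm{Hom}_{\mathcal{C}}(E,-)$ is exact, i.e.\ every epimorphism in $\mathcal{C}$ onto a module containing $E$ as a quasi-homomorphic image lifts through $E$. Indecomposability of $E$ in $\mathcal{C}$ (strong indecomposability) is essentially known from the construction: $E$ was built as an indecomposable torsion-free module of rank $n$ and corank $1$, and by Proposition~\ref{P:Endo}(1), since $E$ has corank $1$ and no summand isomorphic to an ideal, $\mathrm{End}(E)$ is a commutative integral domain; hence $Q\otimes_R\mathrm{End}(E)=\mathrm{End}_{\mathcal{C}}(E)$ is a field or at least a domain with no nontrivial idempotents, so $E$ is indecomposable in $\mathcal{C}$.

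For projectivity I would mimic the corresponding step in Lady's treatment of the rank-one discrete case. The essential point is to show that $E$ is projective relative to the short exact sequences of $\mathcal{C}_{\mathrm{ab}}$ with target a torsion-free module of finite rank. So I would take a quasi-homomorphism $\phi: E\to C$ in $\mathcal{C}$ together with an epimorphism $\psi: G\to C$ in $\mathcal{C}$, where $G$ is torsion-free of finite rank, and try to lift $\phi$ to a quasi-homomorphism $E\to G$. By Lemma~\ref{L:quasi}, a $Q$-linear map between the $Q$-spans is a quasi-homomorphism exactly when it carries the divisible part of $\widehat{R}\otimes_R E$ into that of $\widehat{R}\otimes_R G$. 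The basis element of $\mathrm{d}(\widehat{R}\otimes_R E)$ has the explicit form $u_1+\pi_2u_2+\dots+\pi_nu_n$ with $u_1,\dots,u_n$ a $Q$-basis of $Q\otimes_R E$, displayed just before Theorem~\ref{T:struc}. The plan is to lift the images $\psi^{-1}(\phi(u_k))$ and then verify, using this explicit divisibility criterion together with the fact that $\psi$ induces a surjection on divisible parts of the $\widehat{R}$-tensored modules, that the resulting map respects divisibility, hence is a genuine quasi-homomorphism lifting $\phi$.

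The key technical ingredients I expect to use are: that tensoring with $\widehat{R}$ is exact on the relevant sequences and that $\psi$ surjects on $\widehat{R}\otimes_R(-)$ after inverting a nonzero $r\in R$, so that the divisible part $\mathrm{d}(\widehat{R}\otimes_R G)$ maps onto $\mathrm{d}(\widehat{R}\otimes_R C)$; the special structure of $E$ as $V\cap\widehat{R}^{n-1}$, which lets one recognize a lift as lying in $E$ up to bounded denominators; and Lemma~\ref{L:subE}, which shows that any proper subobject of $E$ in $\mathcal{C}$ splits off as a direct sum of ideals, so that obstructions to lifting either vanish or are absorbed into ideal summands. The cleanest route may well be to invoke the analogous argument in \cite[Theorem 4.1 and its corollaries]{Lad77}, as was done for Theorem~\ref{T:struc}, checking that each step only used almost maximality of $R$ and the finiteness $c_R(0)=n<\infty$, both of which hold here.

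The hardest part will be the lifting step itself: verifying that the candidate lift is a quasi-homomorphism, not merely a $Q$-linear map. This is exactly where the divisibility criterion of Lemma~\ref{L:quasi} and the explicit generator $u_1+\pi_2u_2+\dots+\pi_nu_n$ of the divisible part must be deployed carefully, since one must ensure that the units $\pi_2,\dots,\pi_n$ pair correctly with the chosen lifts so that the divisible element of $\widehat{R}\otimes_R E$ maps into the divisible part of $\widehat{R}\otimes_R G$. I expect that once the lift is constructed on a $Q$-basis and the divisibility condition is checked on the single generating divisible element, the general case follows by $\widehat{R}$-linearity; the indecomposability part and the reduction to Lady's framework should then be routine.
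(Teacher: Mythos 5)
Your indecomposability argument (via Proposition~\ref{P:Endo}(1): $Q\otimes_R\mathrm{End}(E)$ is a commutative domain, finite-dimensional over $Q$, hence a field with no nontrivial idempotents) is fine and agrees with what the paper uses. The problem is in your projectivity argument. Its pivotal step is the claim that a quasi-epimorphism $\psi:G\rightarrow C$ induces a \emph{surjection} $\mathrm{d}(\widehat{R}\otimes_RG)\rightarrow \mathrm{d}(\widehat{R}\otimes_RC)$, and the justification you offer --- exactness of $\widehat{R}\otimes_R-$ plus surjectivity of $\psi$ after inverting some $r$ --- does not give this. Surjectivity of a module map only yields that the image of the divisible part is \emph{contained} in the divisible part of the target, never that it fills it. Indeed, before tensoring the analogous statement is simply false: for $n=2$ the module $E$ is reduced, yet $E$ maps onto $E/R\cong Q$, so $\mathrm{d}(E)=0$ does not surject onto $\mathrm{d}(Q)=Q$. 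After tensoring with $\widehat{R}$ the claim does turn out to be true, but it is exactly the statement that corank is additive along $\psi$, which in the paper's logical order is only available \emph{after} this proposition (Proposition~\ref{P:rank} rests on Theorem~\ref{T:func}, which rests on Proposition~\ref{P:Eproj}); quoting it would be circular. To fill the gap you would have to prove it directly, e.g.\ by observing that $\mathrm{red}(\widehat{R}\otimes_RG)$ is a finite direct sum of fractional ideals of the maximal ring $\widehat{R}$ (this follows from the decomposition used in the proof of Lemma~\ref{L:quasi}), hence linearly compact in the discrete topology, so that its submodules are closed in the $R$-topology and $\bigcap_r\bigl(r(\widehat{R}\otimes_RG)+\ker\bigr)=\ker+\mathrm{d}(\widehat{R}\otimes_RG)$. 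That is a genuine missing argument, not a routine verification; once it is supplied, the rest of your lifting construction (decomposing a preimage $w'\in\mathrm{d}(\widehat{R}\otimes_RG)$ along the basis $1,\pi_2,\dots,\pi_n$ to define the lift on $u_1,\dots,u_n$, then invoking Lemma~\ref{L:quasi}) does work.

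For comparison, the paper avoids the lifting property and this divisibility issue entirely: it shows that every quasi-epimorphism $\phi:H\rightarrow E$ splits (which suffices for projectivity in an abelian category). One first reduces, by Lemma~\ref{L:subE}, to the case where $H$ has no summand isomorphic to an ideal; then Theorem~\ref{T:struc}(2) provides a quasi-epimorphism $\psi:E^c\rightarrow H$, and it is enough to split $\phi\circ\psi:E^c\rightarrow E$. Since $Q\otimes_R\mathrm{End}(E)$ is a subfield of $\widehat{Q}$ by Proposition~\ref{P:Endo}(1), each component $E\rightarrow E$ of $\phi\circ\psi$ is either zero or a quasi-isomorphism; some component is nonzero, and its inverse gives the section. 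You already have all three ingredients (Lemma~\ref{L:subE}, Theorem~\ref{T:struc}, Proposition~\ref{P:Endo}) in hand, so this route requires no new technical facts, whereas yours does.
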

\begin{proof} Let $\phi:H\rightarrow E$ be a quasi-epimorphism where $H$ is a torsion-free module of finite rank. Suppose that $H=F\oplus G$ where $F$ is a direct sum of ideals. By Lemma~\ref{L:subE}, $\phi(G)$ is quasi-isomorphic to $E$. So, we may assume that $H$ has no summand isomorphic to an ideal. By Theorem~\ref{T:struc} there is a quasi-epimorphism $\psi:E^c\rightarrow H$ where $c=\mathrm{corank}\ H$. It is sufficient to see that $\phi\circ\psi$ is a split epimorphism in $\mathcal{C}$. But by Proposition~\ref{P:Endo}(1), $Q\otimes\mathrm{End}(E)$ is a subfield of $\widehat{Q}$, so every quasi-homomorphism $E\rightarrow E$ is either a quasi-isomorphism or trivial and the splitting follows immediately. \end{proof} 

\bigskip

In the sequel, $Q\otimes_R\mathrm{Hom}_R(R\oplus E,M)$ is denoted by $\ddot{M}$ for each $R$-module $M$ and the ring $Q\otimes_R\mathrm{End}_R(R\oplus E)$ by $\Lambda$.

\begin{theorem}
\label{T:func} The foncteur $Q\otimes_R\mathrm{Hom}_R(R\oplus E,\ )$ is an exact fully faitful functor from $\mathcal{C}$ into $\mathrm{mod-}\Lambda$, the category of finitely generated right $\Lambda$-modules.
\end{theorem}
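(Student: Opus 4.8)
The plan is to recognise $\ddot{(-)}=\mathrm{Hom}_{\mathcal C}(R\oplus E,-)$ as the standard projectivisation functor attached to the object $P:=R\oplus E$, and to run the usual Auslander-type argument showing that $\mathrm{Hom}_{\mathcal C}(P,-)$ is exact and fully faithful as soon as $P$ is a projective generator of $\mathcal C$. First I would record that, because $\mathcal C$ is a full subcategory of the abelian category $\mathcal C_{\mathrm{ab}}$, for every $M\in\mathcal C$ we have $\ddot M=\mathrm{Hom}_{\mathcal C}(P,M)$, a right module over $\Lambda=\mathrm{End}_{\mathcal C}(P)$ via precomposition; since $M$ has finite rank, $\ddot M$ is a finite-dimensional $Q$-vector space, and as $Q\hookrightarrow\Lambda$ acts $Q$-linearly any $Q$-basis is a $\Lambda$-generating set, so $\ddot M\in\mathrm{mod}\text{-}\Lambda$.

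Next I would establish the two structural inputs. That $P$ is projective in $\mathcal C$ follows from Proposition~\ref{P:Eproj} for the summand $E$, together with the projectivity of $R$: since $\mathrm{Hom}_{\mathcal C}(R,G)=Q\otimes_R G$ and a quasi-epimorphism $H\to G$ induces a surjective $Q$-linear map $Q\otimes_R H\to Q\otimes_R G$, any quasi-homomorphism $R\to G$ lifts. A finite direct sum of projectives being projective, $P$ is projective, whence $\mathrm{Hom}_{\mathcal C}(P,-)$ (left exact as a $\mathrm{Hom}$ functor) carries the short exact sequences of $\mathcal C$ to short exact sequences of $\Lambda$-modules. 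That $P$ is a generator follows from Theorem~\ref{T:struc}(2): every $G\in\mathcal C$ is a direct sum of ideals together with a quasi-homomorphic image of $E^{c}$, and every non-zero ideal $A$ is quasi-isomorphic to $R$ (the inclusion $A\hookrightarrow R$ is a quasi-isomorphism, its inverse being witnessed by $aR\subseteq A$ for $0\neq a\in A$); hence there is a quasi-epimorphism $P^{N}\twoheadrightarrow G$ for some $N$.

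With these in place I would prove faithfulness and then fullness. For faithfulness, given $f\colon G\to H$ with $\ddot f=0$, I choose a quasi-epimorphism $\pi\colon P^{N}\to G$ with components $\pi\iota_i\colon P\to G$; then $0=\ddot f(\pi\iota_i)=f\circ\pi\iota_i$ for all $i$, so $f\pi=0$ and, $\pi$ being epic, $f=0$. For fullness I would use that $\mathcal C$ is closed under the kernels computed in $\mathcal C_{\mathrm{ab}}$ (a kernel is a pure submodule of a finite-rank torsion-free module, hence again in $\mathcal C$), so that generation yields a presentation $P_1\xrightarrow{d}P_0\xrightarrow{\epsilon}G\to 0$ with $P_0,P_1\in\mathrm{add}\,P$; applying the exact functor gives a free presentation $\ddot{P_1}\to\ddot{P_0}\to\ddot G\to 0$ over $\Lambda$. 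Given $\psi\colon\ddot G\to\ddot H$ and a presentation $\epsilon'\colon P_0'\to H$, I lift $\psi\,\ddot\epsilon$ through the projective $\ddot{P_0}$ and realise it, via the tautological fullness of $\ddot{(-)}$ on $\mathrm{add}\,P$ (where $\mathrm{Hom}_{\mathcal C}(P^{m},P^{k})=\mathrm{Mat}_{k\times m}(\Lambda)$), as $\ddot{g_0}$ for some $g_0\colon P_0\to P_0'$ with $\ddot{\epsilon'}\ddot{g_0}=\psi\,\ddot\epsilon$. Since $\ddot{(\epsilon'g_0d)}=\psi\,\ddot{(\epsilon d)}=0$, faithfulness forces $\epsilon'g_0d=0$, so $\epsilon'g_0$ factors through $\epsilon=\mathrm{coker}\,d$ as $f\epsilon=\epsilon'g_0$; then $\ddot f\,\ddot\epsilon=\psi\,\ddot\epsilon$ with $\ddot\epsilon$ epic yields $\ddot f=\psi$.

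The formal part of the argument is routine once the inputs are granted; the point that needs genuine care, and that I expect to be the main obstacle, is the justification that $\mathcal C$ is closed under the kernels and cokernels formed in $\mathcal C_{\mathrm{ab}}$, so that projective presentations actually live in $\mathcal C$ and the word \emph{exact} is meaningful for the functor. Concretely this reduces to checking that the kernel of a quasi-epimorphism between finite-rank torsion-free modules is again torsion-free of finite rank, which one extracts from the description of quasi-homomorphisms in Lemma~\ref{L:quasi} by clearing a denominator and taking the ordinary kernel of the resulting $R$-homomorphism.
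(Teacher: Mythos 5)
Your overall strategy is the paper's own: establish that $R\oplus E$ is a progenerator of $\mathcal{C}$ (Proposition~\ref{P:Eproj} together with Theorem~\ref{T:struc}) and transport everything through presentations by objects of $\mathrm{add}(R\oplus E)$. The paper merely packages full faithfulness differently: instead of separate faithfulness and fullness chases, it compares the two left exact functors $Q\otimes_R\mathrm{Hom}_R(-,H)$ and $\mathrm{Hom}_{\Lambda}(\ddot{(\ )},\ddot{H})$ on the exact sequence $0\to R^{nc-r}\to E^c\to G\to 0$ supplied by Proposition~\ref{P:Eproj} and Lemma~\ref{L:subE}. Your identification of $\ddot{M}$ with $\mathrm{Hom}_{\mathcal{C}}(R\oplus E,M)$, your direct proof that $R$ is projective in $\mathcal{C}$, the observation that every nonzero ideal is quasi-isomorphic to $R$, and the faithfulness argument are all correct.

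There is, however, a genuine gap precisely at the point you flagged, and your proposed resolution does not close it. It is false that $\mathcal{C}$ is closed under cokernels formed in $\mathcal{C}_{\mathrm{ab}}$: the cokernel of the inclusion $R\hookrightarrow Q$ in $\mathcal{C}_{\mathrm{ab}}$ is $K=Q/R$, which is unbounded torsion and is not isomorphic in $\mathcal{C}_{\mathrm{ab}}$ to any torsion-free module; this cannot be reduced to a statement about kernels. More damagingly for your fullness argument: with your working notion of quasi-epimorphism (surjectivity after applying $Q\otimes_R-$), a quasi-epimorphism need not be the cokernel of its kernel in $\mathcal{C}$. Indeed $R\hookrightarrow Q$ is both monic and epic in $\mathcal{C}$ in this sense, with zero kernel, yet $\mathrm{id}_R$ does not factor through it, because $\mathrm{Hom}_{\mathcal{C}}(Q,R)=Q\otimes_R\mathrm{Hom}_R(Q,R)=0$ while $\mathrm{Hom}_{\mathcal{C}}(R,R)=Q\ne 0$. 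Consequently the step ``$\epsilon'g_0d=0$, hence $\epsilon'g_0$ factors through $\epsilon=\mathrm{coker}\,d$'' is unjustified for a presentation built from such epimorphisms; the same defect infects the claim that the image of $d$ equals the kernel of $\epsilon$. What is needed is the stronger notion of exactness, namely exactness in the abelian category $\mathcal{C}_{\mathrm{ab}}$: the epimorphisms must have \emph{bounded} cokernel as maps of modules (for instance, $R\hookrightarrow Q$ is then no longer an epimorphism, since $K$ is unbounded). This is exactly what the paper extracts from Theorem~\ref{T:struc}(2) and Lemma~\ref{L:subE}: for $G$ with no summand isomorphic to an ideal there is a sequence $0\to R^{nc-r}\to E^c\to G\to 0$ exact in $\mathcal{C}_{\mathrm{ab}}$, and for such a sequence the cokernel factorization (equivalently, the left exactness comparison the paper uses) is legitimate, since any quasi-homomorphism killing a full-rank submodule of bounded index of the kernel kills the kernel itself and descends, after clearing one more denominator, to the quotient. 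Your argument goes through once every quasi-epimorphism in it is taken in this strong sense and you verify that Theorem~\ref{T:struc}(2) furnishes epimorphisms of this kind.
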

\begin{proof} By Theorem~\ref{T:struc} and Proposition~\ref{P:Eproj}, $R\oplus E$ is a progenerator of $\mathcal{C}$. For each finite rank torsion-free $R$-module $H$, the natural map $Q\otimes_R\mathrm{Hom}_R(R\oplus E,H)\rightarrow\mathrm{Hom}_{\Lambda}(\ddot{R}\oplus\ddot{E},\ddot{H})$ is an isomorphism because $\Lambda=\ddot{R}\oplus\ddot{E}$. Thus $Q\otimes_R\mathrm{Hom}_R(F,H)\rightarrow\mathrm{Hom}_{\Lambda}(\ddot{F},\ddot{H})$ is an isomorphism if $F$ is a  summand of a finite direct sum of modules isomorphic to $R\oplus E$. Let $G$ be a finite rank torsion-free $R$-module.
We may assume that $G$ has no summand isomorphic to an ideal of $R$. By Proposition~\ref{P:Eproj} and Lemma~\ref{L:subE}, there is an exact sequence $0\rightarrow R^{nc-r}\rightarrow E^c\rightarrow G\rightarrow 0$ in $\mathcal{C}$. Since both functors are left exact, we get that $Q\otimes_R\mathrm{Hom}_R(G,H)\cong\mathrm{Hom}_{\Lambda}(\ddot{G},\ddot{H})$. \end{proof}

\begin{lemma}
\label{L:submod} If $M$ is a right $\Lambda$-module and $M\subseteq\ddot{G}$ for some finite rank torsion-free $R$-module $G$, then $M\cong\ddot{H}$ for some torsion-free $R$-module $H$.
\end{lemma}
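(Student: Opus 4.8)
The plan is to realise $M$ as the image of a morphism between objects already known to lie in the essential image of $\ddot{(\ )}$, and then to exploit the exactness of that functor to transport an epi–mono factorisation from the abelian category $\mathcal{C}_{\mathrm{ab}}$ over to $\mathrm{mod}\text{-}\Lambda$.

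First I would record the finiteness needed to start a free presentation. The ring $\Lambda=Q\otimes_R\mathrm{End}_R(R\oplus E)$ is a finite-dimensional $Q$-algebra, hence right Noetherian, and $\ddot G$ is a finitely generated $\Lambda$-module: by Theorem~\ref{T:func} and Theorem~\ref{T:struc}, every $\ddot G$ is a quotient of some $\Lambda^k=\ddot{(R\oplus E)^k}$. Consequently the submodule $M\subseteq\ddot G$ is itself finitely generated, so there is a surjection $\Lambda^k\twoheadrightarrow M$. Writing $F_0=(R\oplus E)^k$, additivity gives $\Lambda^k=\ddot{F_0}$, and the composite $\ddot{F_0}\twoheadrightarrow M\hookrightarrow\ddot G$ is a map of $\Lambda$-modules. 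Since $\ddot{(\ )}$ is full and faithful (Theorem~\ref{T:func}), this composite equals $\ddot f$ for a unique quasi-homomorphism $f\colon F_0\to G$ in $\mathcal{C}$.

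Next I would factor $f$ through its image in $\mathcal{C}_{\mathrm{ab}}$. Representing $f$ by a $Q$-linear map $\phi\colon Q\otimes_R F_0\to Q\otimes_R G$ and setting $H=G\cap\phi(Q\otimes_R F_0)$ inside $Q\otimes_R G$, the module $H$ is a torsion-free $R$-module of finite rank, hence an object of $\mathcal{C}$, and $f$ factors as $F_0\xrightarrow{p}H\xrightarrow{\iota}G$ with $p$ an epimorphism and $\iota$ a monomorphism of $\mathcal{C}_{\mathrm{ab}}$. Applying $\ddot{(\ )}$, left-exactness makes $\ddot\iota$ a monomorphism, while the short exact sequence $0\to\ker p\to F_0\to H\to 0$ in $\mathcal{C}$, whose kernel is a finite-rank torsion-free submodule of $F_0$, together with the exactness of $\ddot{(\ )}$ makes $\ddot p$ an epimorphism. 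Therefore the image of $\ddot f$ inside $\ddot G$ is exactly $\ddot\iota(\ddot H)\cong\ddot H$. On the other hand, $\ddot f$ was constructed as $\ddot{F_0}\twoheadrightarrow M\hookrightarrow\ddot G$, so its image equals $M$. Comparing the two descriptions yields $M=\ddot\iota(\ddot H)\cong\ddot H$, which is the desired conclusion.

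The main obstacle I anticipate is the bookkeeping around this factorisation: one must verify that the image $H$ computed in $\mathcal{C}_{\mathrm{ab}}$ genuinely lands in the full subcategory $\mathcal{C}$, which is where torsion-freeness and finiteness of rank of a submodule of $G$ are used, and that ``exactness'' of the functor out of the non-abelian $\mathcal{C}$ is strong enough to send the epimorphism $p$ to an epimorphism, whereas only left-exactness is needed for $\iota$. Once the factorisation has been transported correctly, identifying the image of $\ddot f$ with $M$ is immediate from the surjectivity of $\Lambda^k\twoheadrightarrow M$.
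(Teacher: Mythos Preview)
Your argument is correct and is essentially Lady's proof of \cite[Lemma~5.2]{Lad77}, to which the paper defers: present $M$ as the image of a map $\ddot{F_0}\to\ddot G$ with $F_0=(R\oplus E)^k$, lift that map to $\mathcal{C}$ by full faithfulness, and use exactness of the functor to identify $M$ with $\ddot H$ for $H$ the image of $f$ in $\mathcal{C}$. The only cosmetic difference is that Lady phrases the finiteness input as $\Lambda$ being Artinian rather than Noetherian, but this changes nothing.
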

\begin{proof} See the proof of \cite[Lemma 5.2]{Lad77}. \end{proof}

\begin{proposition}
\label{P:hered} The ring $\Lambda$ is a hereditary Artinian $Q$-algebra such that \\
$(\mathrm{rad}\ \Lambda)^2=0$. There are two simple right $\Lambda$-modules, $\ddot{R}$ which is projective and $\ddot{K}$ which is injective.
\end{proposition}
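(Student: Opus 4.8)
The plan is to compute $\Lambda=Q\otimes_R\mathrm{End}_R(R\oplus E)$ explicitly as a triangular matrix algebra over $Q$ and then read off every assertion from its shape. Write $e_1,e_2$ for the idempotents of $\Lambda$ attached to the summands $R$ and $E$, so that $\ddot R=e_1\Lambda$, $\ddot E=e_2\Lambda$ and $\Lambda=\ddot R\oplus\ddot E$ as right $\Lambda$-modules. The four corners of $\Lambda$ are $Q\otimes\mathrm{End}(R)$, $Q\otimes\mathrm{Hom}(E,R)$, $Q\otimes\mathrm{Hom}(R,E)$ and $Q\otimes\mathrm{End}(E)$, and I would evaluate each through Lemma~\ref{L:quasi}. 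Since $\widehat R$ is reduced, $\mathrm{d}(\widehat R\otimes R)=0$, so every $Q$-linear map out of $Q\otimes R$ is a quasi-homomorphism; this gives $Q\otimes\mathrm{End}(R)=Q$ and $Q\otimes\mathrm{Hom}(R,E)\cong Q\otimes E$ (of $Q$-dimension $n$). On the other hand a $Q$-linear $\phi\colon Q\otimes E\to Q$ is a quasi-homomorphism iff $\phi(u_1)+\pi_2\phi(u_2)+\dots+\pi_n\phi(u_n)=0$ in $\widehat Q$, and the $Q$-independence of $1,\pi_2,\dots,\pi_n$ forces $\phi=0$, so $Q\otimes\mathrm{Hom}(E,R)=0$. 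Hence $\Lambda$ is the lower triangular algebra $\left(\begin{smallmatrix} Q & 0\\ Q\otimes E & F\end{smallmatrix}\right)$ with $F=Q\otimes\mathrm{End}(E)$.

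The key local fact is that $F=\widehat Q$. By Proposition~\ref{P:Endo}(1) (case $c=1$), $F$ embeds in $\widehat Q$ via $\phi\mapsto\lambda_\phi$, where $\lambda_\phi$ is the scalar by which $\widehat R\otimes\phi$ acts on the one-dimensional space $\mathrm{d}(\widehat R\otimes E)$ spanned by $w=u_1+\pi_2u_2+\dots+\pi_nu_n$. Conversely, given $\lambda\in\widehat Q$, I would take the $n\times n$ matrix $M$ over $Q$ of multiplication by $\lambda$ in the basis $1,\pi_2,\dots,\pi_n$ of $\widehat Q$ and set $\phi(u_j)=\sum_i M_{ji}u_i$; a direct check gives $(\widehat R\otimes\phi)(w)=\lambda w$, so $\phi$ preserves $\mathrm{d}(\widehat R\otimes E)$, is a quasi-endomorphism of $E$, and satisfies $\lambda_\phi=\lambda$. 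Thus $F=\widehat Q$ and $Q\otimes E$ is one-dimensional over $F$.

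The ring-theoretic claims are now formal. Each corner is finite-dimensional over $Q$, so $\Lambda$ is a finite-dimensional $Q$-algebra, hence Artinian; its radical is the corner $\left(\begin{smallmatrix}0&0\\ Q\otimes E&0\end{smallmatrix}\right)$, which clearly squares to $0$, and $\Lambda/\mathrm{rad}\,\Lambda\cong Q\times\widehat Q$ is semisimple with exactly two simple right modules $S_1=\ddot R$ (of $Q$-dimension $1$) and $S_2$ (of $Q$-dimension $n$). For heredity I would verify that every submodule of an indecomposable projective is projective: $\ddot R=e_1\Lambda$ is simple, while $\mathrm{rad}\,\ddot E$ equals the corner $\left(\begin{smallmatrix}0&0\\ Q\otimes E&0\end{smallmatrix}\right)$, a semisimple module that is a direct sum of copies of the simple projective $\ddot R$; since $\ddot E/\mathrm{rad}\,\ddot E\cong S_2$ is simple, every submodule of $\ddot E$ is either contained in $\mathrm{rad}\,\ddot E$ (hence projective) or, by Nakayama, equals $\ddot E$. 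Finally $\ddot R=e_1\Lambda$ is the projective simple.

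It remains to identify $\ddot K$. Applying the left exact functor $\ddot{(\ )}$ to $0\to R\to Q\to K\to 0$, and using $\mathrm{Ext}^1_R(\,\cdot\,,Q)=0$ together with $Q\otimes\mathrm{Ext}^1_R(E,R)=0$ — the latter because $E$ is projective in $\mathcal{C}$ by Proposition~\ref{P:Eproj}, so every extension of $R$ by $E$ splits in $\mathcal{C}$ and its class dies after tensoring with $Q$ — I obtain $\ddot K\cong\ddot Q/\ddot R$, of $Q$-dimension $(1+n)-1=n$. Here $\ddot Q\,e_1\cong Q\otimes\mathrm{Hom}(R,Q)$ is one-dimensional and coincides with the image of $\ddot R\,e_1$, so $\ddot K\,e_1=0$, i.e.\ $\ddot K$ has no composition factor $S_1$; being $S_2$-isotypic of dimension $n=\dim_Q S_2$, it is isomorphic to $S_2$. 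That $S_2$ is injective follows by duality: $\Lambda e_2=\left(\begin{smallmatrix}0&0\\0&\widehat Q\end{smallmatrix}\right)$ is a simple projective \emph{left} $\Lambda$-module, whence its $Q$-dual $S_2$ is a simple injective right module. The main obstacle in all of this is the identification $F=\widehat Q$: without $[F:Q]=n$ the module $\ddot K$ would only be $S_2$-isotypic rather than simple, so constructing, for each $\lambda\in\widehat Q$, an actual quasi-endomorphism of $E$ with eigenvalue $\lambda$ on $\mathrm{d}(\widehat R\otimes E)$ is the crux; everything else is formal once $\Lambda$ is known to be this triangular algebra over the two fields $Q$ and $\widehat Q$.
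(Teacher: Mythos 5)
Your proposal is correct and follows essentially the same route as the paper's proof, which is simply a citation of Lady's argument for \cite[Proposition 5.3]{Lad77}: compute $\Lambda$ as the triangular matrix algebra $\bigl(\begin{smallmatrix} Q & 0\\ Q\otimes E & \widehat{Q}\end{smallmatrix}\bigr)$ using $Q\otimes\mathrm{Hom}(E,R)=0$ and $Q\otimes\mathrm{End}(E)\cong\widehat{Q}$, then read off the Artinian, radical-square-zero, hereditary and simple-module statements, identifying $\ddot{K}$ via the sequence $0\rightarrow R\rightarrow Q\rightarrow K\rightarrow 0$. Your explicit verification that the embedding $Q\otimes\mathrm{End}(E)\hookrightarrow\widehat{Q}$ of Proposition~\ref{P:Endo} is surjective is exactly the point where Lady's computation must be adapted to the almost maximal valuation setting, and you handle it correctly.
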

\begin{proof} See the proof of \cite[Proposition 5.3]{Lad77}. \end{proof}

\begin{proposition}
\label{P:cover}$\ddot{Q}$ is an injective hull for $\ddot{R}$ and $\ddot{E}$ is a projective cover for $\ddot{K}$.
\end{proposition}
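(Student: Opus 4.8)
The plan is to read both assertions off the structure of $\Lambda$ recorded in Proposition~\ref{P:hered}, exploiting that $\ddot{\ }$ is an exact fully faithful functor whose image contains the projectives (Theorem~\ref{T:func}). First I would settle the projective cover. Since $R\oplus E$ is a progenerator we have $\Lambda=\ddot R\oplus\ddot E$, so $\ddot R$ and $\ddot E$ are, up to isomorphism, the indecomposable projective right $\Lambda$-modules; $\ddot E$ is indecomposable because $\ddot{\ }$ is fully faithful and $E$ is strongly indecomposable (Proposition~\ref{P:Eproj}). By Proposition~\ref{P:hered}, $\ddot R$ is the unique simple projective and $\ddot K$ the remaining simple, which is not projective (a projective simple would be $\ddot R$). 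As $\ddot E\not\cong\ddot R$ (images of the non-quasi-isomorphic modules $E$ and $R$) and $\ddot E\not\cong\ddot K$ (it is projective), $\ddot E$ is a non-simple indecomposable projective. Its top $\ddot E/\mathrm{rad}\,\ddot E$ is then simple, and it cannot be $\ddot R$, for otherwise $\ddot E$ would be a projective cover of the projective module $\ddot R$, hence equal to it. Thus the top of $\ddot E$ is $\ddot K$, i.e. $\ddot E$ is a projective cover of $\ddot K$.

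For the injective hull I would first produce the inclusion and pin down indecomposability, and then attack injectivity, which is the real content. Applying the left-exact faithful functor $\ddot{\ }$ to $0\to R\to Q$ gives a monomorphism $\ddot R\hookrightarrow\ddot Q$; and since $Q\in\mathcal C$, full faithfulness yields $\mathrm{End}_\Lambda(\ddot Q)\cong Q\otimes_R\mathrm{End}_R(Q)=Q$, so $\ddot Q$ is indecomposable. To see that $\ddot Q$ is injective I would invoke Baer's criterion: it suffices to extend every $\Lambda$-homomorphism defined on a right ideal $\mathfrak a\subseteq\Lambda=\ddot{(R\oplus E)}$. By Lemma~\ref{L:submod} such an $\mathfrak a$ is of the form $\ddot H$ for some finite rank torsion-free $H$, and by full faithfulness the inclusion $\mathfrak a\hookrightarrow\Lambda$ and the given map are $\ddot\iota$ and $\ddot\beta$ for a quasi-monomorphism $\iota\colon H\to R\oplus E$ and a quasi-homomorphism $\beta\colon H\to Q$. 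Clearing denominators, $\iota$ is an honest $R$-monomorphism of torsion-free modules and $\beta$ a genuine $R$-homomorphism into $Q$; since $Q$ is an injective $R$-module, $\beta$ extends along $\iota$, and applying $\ddot{\ }$ delivers the required extension to $\Lambda$. Hence $\ddot Q$ is injective.

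Finally I would conclude: being an indecomposable injective over the Artinian algebra $\Lambda$, $\ddot Q$ has simple socle, and as it contains the simple module $\ddot R$ its socle equals $\ddot R$, which is therefore essential in $\ddot Q$. Thus $\ddot Q$ is an injective hull of $\ddot R$. I expect the one genuinely delicate point to be the injectivity of $\ddot Q$; everything else is formal once one observes, via Lemma~\ref{L:submod}, that Baer's test over $\Lambda$ reduces to the injectivity of $Q$ as an $R$-module through the functor.
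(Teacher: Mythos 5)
Your proof is correct, and it is worth noting that the paper itself contains no argument here: its proof of Proposition~\ref{P:cover} is the single line ``see the proof of \cite[Proposition 5.4]{Lad77}''. So what you have produced is a self-contained argument relying only on results stated in the paper (Theorem~\ref{T:func}, Lemma~\ref{L:submod}, Propositions~\ref{P:Eproj} and \ref{P:hered}) plus standard facts. The projective-cover half is essentially the classical argument: $\ddot{E}$ is an indecomposable projective summand of $\Lambda$, hence has simple top and is the projective cover of that top; the top cannot be $\ddot{R}$ (uniqueness of projective covers would force $\ddot{E}\cong\ddot{R}$, which full faithfulness excludes), so by Proposition~\ref{P:hered} the top is $\ddot{K}$. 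The injective-hull half is where your packaging genuinely differs from the route the paper's framework suggests: instead of working inside $\mathrm{mod-}\Lambda$ (length-two essential extension $0\rightarrow\ddot{R}\rightarrow\ddot{Q}\rightarrow\ddot{K}\rightarrow 0$ plus the hereditary, radical-square-zero structure of $\Lambda$), you verify Baer's criterion directly, using Lemma~\ref{L:submod} to write a right ideal as $\ddot{H}$, fullness to realize the data as a quasi-monomorphism $\iota:H\rightarrow R\oplus E$ and a quasi-homomorphism $\beta:H\rightarrow Q$, and the injectivity of $Q$ as an $R$-module (torsion-free divisible over a domain) to extend. This has the merit of making transparent that the injectivity of $\ddot{Q}$ is exactly the injectivity of $Q$ seen through the functor, and it is the one place where Lemma~\ref{L:submod} does real work. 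Two points you gloss over are true but should be said: the functor reflects monomorphisms (faithfulness plus exactness, or restrict $\iota$ to $H\cap\ker$), so $\iota$ really is injective after clearing denominators; and the scalar $r$ introduced by clearing denominators must be reabsorbed into the extending map.

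The one blemish is a circularity at the start: you justify ``$\ddot{K}$ is not projective'' by asserting that a projective simple would have to be $\ddot{R}$ --- which tacitly assumes $\ddot{E}$ is not simple --- and you later justify ``$\ddot{E}$ is not simple'' via $\ddot{E}\ncong\ddot{K}$ ``since $\ddot{E}$ is projective'', which uses that $\ddot{K}$ is not. Fortunately neither claim is needed: the top of \emph{any} indecomposable projective over an Artinian ring is simple, whether or not the module itself is, so your chain ``top is simple, top $\ne\ddot{R}$, hence top $=\ddot{K}$'' already closes the proof (and even in the impossible degenerate case $\ddot{E}\cong\ddot{K}$ the conclusion would hold trivially). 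If you want the two auxiliary claims anyway, break the circle by computing $\mathrm{Hom}_{\Lambda}(\ddot{R},\ddot{E})\cong Q\otimes_R\mathrm{Hom}_R(R,E)\cong Q\otimes_RE\ne 0$: a nonzero homomorphism between simple modules is an isomorphism, so $\ddot{E}$ simple would give $\ddot{E}\cong\ddot{R}$, which full faithfulness forbids.
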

\begin{proof} See the proof of \cite[Proposition 5.4]{Lad77}. \end{proof}

\begin{theorem}
\label{T:subcat} The image of $\mathcal{C}$ under the functor $Q\otimes_R\mathrm{Hom}_R(R\oplus E,\ )$ is the full subcategory of $\mathrm{mod-}\ \Lambda$ consisting of modules with no summand isomorphic to $\ddot{K}$.
\end{theorem}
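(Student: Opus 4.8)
The plan is to prove the two inclusions separately, using throughout that $R\oplus E$ is a progenerator of $\mathcal{C}$ and that the functor $M\mapsto\ddot{M}$ is exact and fully faithful (Theorem~\ref{T:func}); the argument transcribes that of \cite[Theorem 5.5]{Lad77} into the present setting.

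First I would show that every object in the image has no summand isomorphic to $\ddot{K}$. Since a direct summand is in particular a submodule, it is enough to prove that $\ddot{K}$ is isomorphic to no submodule of $\ddot{G}$, for $G$ torsion-free of finite rank. If $\ddot{K}$ embedded in $\ddot{G}$, then Lemma~\ref{L:submod} would furnish a torsion-free module $H$ with $\ddot{H}\cong\ddot{K}$. But full faithfulness identifies $\mathrm{Hom}_{\Lambda}(\ddot{R},\ddot{H})$ with $Q\otimes_R\mathrm{Hom}_R(R,H)=Q\otimes_RH$, which is nonzero whenever $H\neq 0$, whereas $\mathrm{Hom}_{\Lambda}(\ddot{R},\ddot{K})=0$ because $\ddot{R}$ and $\ddot{K}$ are non-isomorphic simple modules (Proposition~\ref{P:hered}). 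This contradiction rules out any such $H$, hence any $\ddot{K}$ summand of $\ddot{G}$.

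For the reverse inclusion, let $M$ be a finitely generated right $\Lambda$-module with no summand isomorphic to $\ddot{K}$; I must produce $G\in\mathcal{C}$ with $\ddot{G}\cong M$. Since $\Lambda$ is hereditary (Proposition~\ref{P:hered}), $M$ admits a minimal projective resolution $0\to P_1\xrightarrow{f}P_0\to M\to 0$ with $P_0,P_1$ projective, hence of the form $P_i=\ddot{Y_i}$ for direct sums $Y_i$ of copies of $R$ and $E$; by full faithfulness $f=\ddot{\phi}$ for a quasi-homomorphism $\phi\colon Y_1\to Y_0$. Let $W$ be the purification of $\phi(Y_1)$ in $Y_0$ and $G=Y_0/W$, so that $0\to W\to Y_0\to G\to 0$ is a pure exact sequence in $\mathcal{C}$ with $G$ torsion-free of finite rank. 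Applying the exact functor $M\mapsto\ddot{M}$ gives $0\to\ddot{W}\to P_0\to\ddot{G}\to 0$, and if I can identify $\ddot{W}$ with $P_1$ compatibly with $f$, then $\ddot{G}\cong\mathrm{coker}(f)=M$ as desired.

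The heart of the matter, and where the hypothesis on $\ddot{K}$ is used, is this identification $\ddot{W}\cong P_1$, i.e. that purification does not change the quasi-isomorphism type of $Y_1$. By Lemma~\ref{L:subE} a proper pure submodule of a copy of $E$ is a direct sum of ideals, so passing from $\phi(Y_1)$ to its purification can raise the rank of an $E$-block only by completing it to a full copy of $E$; each such completion replaces a $\mathrm{rad}\,\ddot{E}$-worth of copies of $\ddot{R}$ by all of $\ddot{E}$ and therefore, by minimality of the resolution together with Krull--Schmidt in $\mathrm{mod}$-$\Lambda$, splits a copy of the top $\ddot{K}=\ddot{E}/\mathrm{rad}\,\ddot{E}$ off $M$ (recall $\ddot{E}$ is the projective cover of $\ddot{K}$ by Proposition~\ref{P:cover}). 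Since $M$ has no such summand, no completion occurs, whence $W\cong Y_1$ in $\mathcal{C}$, $\ddot{W}\cong P_1$, and $\ddot{G}\cong M$. The main obstacle is precisely this bookkeeping step translating the appearance of torsion in $Y_0/\phi(Y_1)$ into a $\ddot{K}$-summand of $M$; once it is in place the theorem follows.
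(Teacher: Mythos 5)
Your first inclusion is correct: combining Lemma~\ref{L:submod} with full faithfulness (Theorem~\ref{T:func}) and the fact that $\ddot{R}$ and $\ddot{K}$ are non-isomorphic simple modules (Proposition~\ref{P:hered}) does rule out $\ddot{K}$ as a submodule, hence as a summand, of any $\ddot{G}$. The skeleton of your converse is also the right one (it is the route of \cite[Theorem 5.5]{Lad77}, to which the paper's proof defers): resolve $M$ by $0\to P_1\xrightarrow{f}P_0\to M\to 0$, write $P_i=\ddot{Y_i}$ and $f=\ddot{\phi}$, purify $\phi(Y_1)$ to $W\subseteq Y_0$, and compare $\ddot{W}$ with $\mathrm{im}\,f$ inside $P_0$.

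The gap is in the one step where the hypothesis on $\ddot{K}$ must enter. First, the bookkeeping picture is unfounded: a pure submodule $W$ of $Y_0$ need not decompose along the $R$- and $E$-blocks of $Y_0$, so Lemma~\ref{L:subE} gives no direct control over $W$; moreover purification never changes rank, so ``raising the rank of an $E$-block'' is not the relevant phenomenon --- what can change is the quasi-isomorphism class. Second, and decisively, ``minimality of the resolution together with Krull--Schmidt'' cannot split a copy of $\ddot{K}$ off $M$: what a failed identification produces is only a \emph{submodule} of $M$, namely $\ddot{W}/\mathrm{im}\,f\subseteq P_0/\mathrm{im}\,f=M$, and a submodule is a summand only if something splits it off. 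The tool that does this is the injectivity of $\ddot{K}$ (Proposition~\ref{P:hered}), which your argument never invokes, although it is exactly the reason the image is cut out by the absence of $\ddot{K}$-\emph{summands}. The correct ending runs as follows: since $\Lambda$ is hereditary, $\ddot{W}\subseteq P_0$ is projective; since $f$ is injective and the functor is faithful and exact, $\phi$ is monic, so $\mathrm{rank}\,W=\mathrm{rank}\,Y_1$ and, by Proposition~\ref{P:rank}, $\mathrm{rank}\,\ddot{W}=\mathrm{rank}\,P_1=\mathrm{rank}\,\mathrm{im}\,f$. Hence $\ddot{W}/\mathrm{im}\,f$ has rank $0$, i.e. all its composition factors are isomorphic to $\ddot{K}$; as $\ddot{K}$ is simple and injective, this forces $\ddot{W}/\mathrm{im}\,f\cong\ddot{K}^t$, and this injective submodule of $M$ is a direct summand, with $M/(\ddot{W}/\mathrm{im}\,f)\cong P_0/\ddot{W}\cong\ddot{G}$. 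Thus $M\cong\ddot{K}^t\oplus\ddot{G}$, the hypothesis gives $t=0$, and $M\cong\ddot{G}$. Note that in this repaired argument minimality of the resolution plays no role at all.
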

\begin{proof} See the proof of \cite[Theorem 5.5]{Lad77}. \end{proof}

\bigskip Let $M$ be a finitely generated (i.e.,finite length) right $\Lambda$-module. We define \textbf{rank} $M$ to be the number of factors in a composition series for $M$ isomorphic to $\ddot{R}$ and \textbf{corank} $M$ to be the number of composition factors isomorphic to $\ddot{K}$.

\begin{proposition}
\label{P:rank} The foncteur $Q\otimes_R\mathrm{Hom}_R(R\oplus E,\ )$ preserves rank and corank.
\end{proposition}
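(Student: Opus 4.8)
The plan is to establish that both $\mathrm{rank}$ and $\mathrm{corank}$ are additive on short exact sequences in $\mathcal{C}$ and in $\mathrm{mod}\text{-}\Lambda$. Since the functor $F=Q\otimes_R\mathrm{Hom}_R(R\oplus E,\ )$ is exact by Theorem~\ref{T:func}, it will then suffice to verify the two equalities $\mathrm{rank}\,\ddot{G}=\mathrm{rank}\,G$ and $\mathrm{corank}\,\ddot{G}=\mathrm{corank}\,G$ on a family of objects generating $\mathcal{C}$ under extensions, namely on the simple objects. On the $\Lambda$-side additivity is immediate from Jordan--H\"older: by Proposition~\ref{P:hered} the only simple modules are $\ddot{R}$ and $\ddot{K}$, and $\mathrm{rank}$ (resp. $\mathrm{corank}$) counts the factors isomorphic to $\ddot{R}$ (resp. $\ddot{K}$). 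On the $\mathcal{C}$-side I would use Lemma~\ref{L:quasi} to identify an object $G$ with the pair consisting of the $Q$-space $Q\otimes_R G$ together with the $\widehat{Q}$-subspace $\mathrm{d}(\widehat{R}\otimes_R G)$ of $\widehat{R}\otimes_R G$, morphisms being the $Q$-linear maps carrying the second datum into the second datum. In this description $\mathrm{rank}\,G=\dim_Q(Q\otimes_R G)$ and $\mathrm{corank}\,G=\dim_{\widehat{Q}}\mathrm{d}(\widehat{R}\otimes_R G)$, and both dimensions are visibly additive on short exact sequences.

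Next I would determine the simple objects of $\mathcal{C}$. A nonzero object $G$ admits a proper nonzero subobject whenever $\dim_Q(Q\otimes_R G)\geq 2$, since any $Q$-subspace is realized by a pure submodule; hence the simple objects are exactly the rank-one torsion-free modules. Such a module is either divisible, and so quasi-isomorphic to $Q$, or reduced, and so isomorphic to an ideal and thus quasi-isomorphic to $R$; as $Q$ and $R$ are not quasi-isomorphic, there are precisely two simple objects, $R$ (of $\mathrm{rank}\ 1$, $\mathrm{corank}\ 0$) and $Q$ (of $\mathrm{rank}\ 1$, $\mathrm{corank}\ 1$). Since $\mathrm{rank}$ bounds the length of any chain of subobjects, every object of $\mathcal{C}$ has a finite composition series with factors $R$ and $Q$.

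It then remains to compute $F$ on these two simples. By Proposition~\ref{P:hered}, $F(R)=\ddot{R}$ is the simple projective $\Lambda$-module, so $\mathrm{rank}\,\ddot{R}=1$ and $\mathrm{corank}\,\ddot{R}=0$. By Proposition~\ref{P:cover}, $F(Q)=\ddot{Q}$ is the injective hull of $\ddot{R}$; since the only injective simple is $\ddot{K}\neq\ddot{R}$, the module $\ddot{R}$ is not injective, so $\ddot{Q}$ has length at least $2$, while $(\mathrm{rad}\,\Lambda)^2=0$ forces Loewy length at most $2$. Thus $\ddot{Q}$ has length $2$ with socle $\ddot{R}$, and because $\ddot{R}$ is projective we have $\mathrm{Ext}^1_\Lambda(\ddot{R},\ddot{R})=0$, so the top $\ddot{Q}/\ddot{R}$ must be $\ddot{K}$. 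Hence $\mathrm{rank}\,\ddot{Q}=1=\mathrm{corank}\,\ddot{Q}$. Now if $G$ has $a$ composition factors isomorphic to $R$ and $b$ isomorphic to $Q$, additivity gives $\mathrm{rank}\,G=a+b$ and $\mathrm{corank}\,G=b$; applying the exact functor $F$ yields a filtration of $\ddot{G}$ with $a$ factors $\ddot{R}$ and $b$ factors $\ddot{Q}$, hence $a+b$ composition factors $\ddot{R}$ and $b$ composition factors $\ddot{K}$, giving $\mathrm{rank}\,\ddot{G}=a+b$ and $\mathrm{corank}\,\ddot{G}=b$.

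The main obstacle I anticipate is the additivity of $\mathrm{corank}$ in $\mathcal{C}$, that is, the exactness of the assignment $G\mapsto\mathrm{d}(\widehat{R}\otimes_R G)$. To make this rigorous I would represent an abstract short exact sequence of $\mathcal{C}$ by a genuine pure-exact sequence of torsion-free modules (replacing the objects by quasi-isomorphic representatives), apply the flat functor $\widehat{R}\otimes_R-$ to keep it pure-exact, and use that $\widehat{R}\otimes_R M$ splits as the divisible part $\mathrm{d}(\widehat{R}\otimes_R M)$ plus a reduced pure-injective summand so that the $\widehat{Q}$-dimensions of the divisible parts add up; the identification in Lemma~\ref{L:quasi} guarantees that this construction is natural for the morphisms of $\mathcal{C}$. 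Everything else is then a routine induction on the length of a composition series.
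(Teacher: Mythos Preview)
Your overall strategy is sound and is essentially the one Lady uses (and which the paper simply cites): exploit exactness of the functor, reduce to the two simple objects $R$ and $Q$ of $\mathcal{C}$, and compute the composition factors of $\ddot{R}$ and $\ddot{Q}$ on the $\Lambda$-side. The treatment of additivity and of $\ddot{R}$ is fine.

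There is, however, a genuine gap in your computation of $\mathrm{corank}\,\ddot{Q}$. From ``length $\geq 2$'', ``Loewy length $\leq 2$'', ``$\mathrm{soc}(\ddot{Q})=\ddot{R}$'' and ``$\mathrm{Ext}^1_\Lambda(\ddot{R},\ddot{R})=0$'' you correctly deduce that $\ddot{Q}/\ddot{R}$ is semisimple with only $\ddot{K}$-summands, but these facts do \emph{not} force $\ddot{Q}/\ddot{R}\cong\ddot{K}$ rather than $\ddot{K}^m$ for some $m>1$. For a hereditary Artin algebra with $(\mathrm{rad}\,\Lambda)^2=0$ and two simples, the injective hull of the projective simple typically has $m$ equal to the number of arrows in the underlying species, and nothing you have written bounds that number. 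Your inference ``Loewy length at most $2$ $\Rightarrow$ length $2$'' conflates Loewy length with composition length.

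The fix is short and stays within your framework. From Proposition~\ref{P:rankCo} there is a short exact sequence $0\to R^{\,n-1}\to E\to Q\to 0$ in $\mathcal{C}$; applying the exact functor gives $0\to\ddot{R}^{\,n-1}\to\ddot{E}\to\ddot{Q}\to 0$. By Proposition~\ref{P:cover}, $\ddot{E}$ is the indecomposable projective cover of $\ddot{K}$; since $(\mathrm{rad}\,\Lambda)^2=0$, $\mathrm{rad}\,\ddot{E}$ is semisimple, and any $\ddot{K}$-summand of it would be injective and hence split off from $\ddot{E}$, contradicting indecomposability. Thus $\ddot{E}$ has exactly one composition factor isomorphic to $\ddot{K}$, and by additivity so does $\ddot{Q}$. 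This yields $\mathrm{corank}\,\ddot{Q}=1$ (and, combining with $\mathrm{soc}(\ddot{Q})=\ddot{R}$, also $\mathrm{rank}\,\ddot{Q}=1$), after which your induction on a composition series goes through unchanged.
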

\begin{proof} See the proof of \cite[Proposition 5.6]{Lad77}. \end{proof}

\bigskip

We now consider the functors $\mathrm{\mathbf{D}}=\mathrm{Hom}_R(\ ,Q)$ and $\mathrm{\mathbf{Tr}}=\mathrm{Ext}_{\Lambda}(\ ,\Lambda)$ which take right $\Lambda$-modules to left $\Lambda$-modules and conversely. It is well known that $\mathrm{\mathbf{D}}$ is an exact contravariant lenght preserving functor taking projectives to injectives and conversely, and that $\mathrm{\mathbf{D}}^2$ is the identity for finitely generated $\Lambda$-modules. Since $\Lambda$ is hereditary, $\mathrm{\mathbf{Tr}}$ is right exact and $\mathrm{\mathbf{Tr}}^2\ M\cong M$ if $M$ has no projective summand, $\mathrm{\mathbf{Tr}}\ M=0$ if $M$ is projective. We consider the Coxeter functors $\mathrm{C}^+=\mathrm{\mathbf{DTr}}$ and $\mathrm{C}^-=\mathrm{\mathbf{TrD}}$. Thus $\mathrm{C}^+:\mathrm{mod-}\Lambda\rightarrow\mathrm{mod-}\Lambda$ is left exact and $\mathrm{C}^-:\mathrm{mod-}\Lambda\rightarrow\mathrm{mod-}\Lambda$ is right exact. If $M$ has no projective (respectively injective) summand, it is easy to check that $M$ is indecomposable if and only if $\mathrm{C}^+\ M$ (respectively $\mathrm{C}^-\ M$) is indecomposable.

\begin{proposition}
\label{P:Coxeter} Let $M$ be a right $\Lambda$-module with $\mathrm{rank}\ r$ and $\mathrm{corank}\ c$.
\begin{enumerate}
\item If $M$ has no projective summand, then $\mathrm{corank}\ \mathrm{C}^+\ M=(n-1)c-r$ and $\mathrm{rank}\ \mathrm{C}^+\ M=nc-r$.
\item If $M$ has no injective summand, then $\mathrm{rank}\ \mathrm{C}^-\ M=(n-1)r-nc$ and $\mathrm{corank}\ \mathrm{C}^-\ M=r-c$.
\end{enumerate} 
\end{proposition}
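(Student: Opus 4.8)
The plan is to compute the rank and corank of $\mathrm{C}^+\ M$ and $\mathrm{C}^-\ M$ from the length-exactness properties of the Coxeter functors, combined with the known behavior on the indecomposable projectives and injectives. Recall from Proposition~\ref{P:hered} that $\Lambda$ has exactly two simple right modules, $\ddot{R}$ (projective) and $\ddot{K}$ (injective), and that $(\mathrm{rad}\ \Lambda)^2=0$. By Proposition~\ref{P:cover}, $\ddot{E}$ is the projective cover of $\ddot{K}$ and $\ddot{Q}$ is the injective hull of $\ddot{R}$. The first step is to record the rank and corank of the four relevant indecomposables: the projectives are $\ddot{R}$ (rank $1$, corank $0$) and $\ddot{E}$, and the injectives are $\ddot{K}$ (rank $0$, corank $1$) and $\ddot{Q}$. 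Using Proposition~\ref{P:rank} and the known ranks and coranks of $E$ (rank $n$, corank $1$) and $Q$ (rank $n-1$ after passing to $\ddot{Q}$, since $\dim_Q\widehat{Q}=n$), I would fix the values $\mathrm{rank}\ \ddot{E}=n$, $\mathrm{corank}\ \ddot{E}=1$ and the dual data for $\ddot{Q}$.

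\textbf{The key reduction via almost split sequences.}
Since $\mathrm{\mathbf{Tr}}\ M=0$ exactly when $M$ is projective and $\mathrm{C}^+=\mathrm{\mathbf{DTr}}$, the functor $\mathrm{C}^+$ kills projective summands; dually $\mathrm{C}^-$ kills injective summands. Thus for $(1)$ I may assume $M$ has no projective summand, so that $\mathrm{C}^-\mathrm{C}^+\ M\cong M$, and I can read off the effect of $\mathrm{C}^+$ on rank and corank from a minimal projective presentation. Concretely, take a minimal projective presentation
\[
0\rightarrow P_1\rightarrow P_0\rightarrow M\rightarrow 0,
\]
with $P_1$, $P_0$ direct sums of copies of $\ddot{R}$ and $\ddot{E}$. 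Applying $\mathrm{\mathbf{Tr}}=\mathrm{Ext}_{\Lambda}(\ ,\Lambda)$ and then $\mathrm{\mathbf{D}}$, hereditarily of $\Lambda$ gives an exact sequence whose terms are $\mathrm{C}^+\ M$ together with injectives $\mathrm{\mathbf{D}}(P_0^{*})$, $\mathrm{\mathbf{D}}(P_1^{*})$. The multiplicities of $\ddot{R}$ and $\ddot{E}$ in $P_0$, $P_1$ are determined by $r$ and $c$: writing $P_0=\ddot{R}^{\,a_0}\oplus\ddot{E}^{\,b_0}$ and similarly for $P_1$, the additivity of rank and corank on the presentation gives two linear relations, and the minimality of the presentation (no common summand) pins down the multiplicities. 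Since $\mathrm{\mathbf{D}}$ and $\mathrm{\mathbf{Tr}}$ are length-preserving and length-exact on these terms, I can then solve for $\mathrm{rank}\ \mathrm{C}^+\ M$ and $\mathrm{corank}\ \mathrm{C}^+\ M$ as the stated linear combinations $nc-r$ and $(n-1)c-r$. The computation of $(2)$ is entirely dual, replacing projective presentations by injective copresentations and interchanging the roles of $\ddot{R}$ with $\ddot{K}$ and of rank with corank; this yields $\mathrm{rank}\ \mathrm{C}^-\ M=(n-1)r-nc$ and $\mathrm{corank}\ \mathrm{C}^-\ M=r-c$.

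\textbf{Anticipated obstacle and how I would handle it.}
The main delicate point is bookkeeping the rank and corank contributions of the injective terms that appear after applying $\mathrm{\mathbf{D}}$, since $\mathrm{C}^+\ M$ is only the cokernel (or kernel) of a map between these injectives, not one of them directly; getting the signs and the coefficient $n$ versus $n-1$ correct requires using $\dim_Q\widehat{Q}=n$, equivalently $\mathrm{rank}\ \ddot{E}=n$ and $\mathrm{corank}\ \ddot{Q}=n$, at precisely the right spot. I would cross-check the final formulas against two sanity cases: first $M=\ddot{Q}$ (injective, rank $n-1$, corank $n$), for which $\mathrm{C}^+$ should behave predictably, and second the reduced torsion-free module $E$ itself via Corollary~\ref{C:rank}, whose rank--corank inequalities $r\leq nc$ and $nc\leq(n-1)r$ are exactly the conditions that the output ranks and coranks in $(1)$ and $(2)$ be nonnegative. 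This consistency with Corollary~\ref{C:rank} is reassuring and, as in \cite[Proposition 5.7]{Lad77}, the whole argument can be carried out by citing Lady's computation; I would write the proof simply as an adaptation, noting that it follows as in the proof of the corresponding statement in \cite{Lad77}.
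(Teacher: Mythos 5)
Your overall route is the right one, and it is in substance the same as the paper's, since the paper's proof is simply a citation of \cite[Proposition 5.7]{Lad77}: take a minimal projective presentation $0\rightarrow P_1\rightarrow P_0\rightarrow M\rightarrow 0$ with $P_i=\ddot{R}^{a_i}\oplus\ddot{E}^{b_i}$, use that $\mathrm{Hom}_{\Lambda}(M,\Lambda)=0$ when $M$ has no projective summand to obtain an exact sequence $0\rightarrow \mathrm{C}^+\,M\rightarrow \mathrm{\mathbf{D}}(\mathrm{Hom}_{\Lambda}(P_1,\Lambda))\rightarrow \mathrm{\mathbf{D}}(\mathrm{Hom}_{\Lambda}(P_0,\Lambda))\rightarrow 0$, whose outer terms are direct sums of copies of $\ddot{Q}=\mathrm{\mathbf{D}}(\mathrm{Hom}_{\Lambda}(\ddot{R},\Lambda))$ and $\ddot{K}=\mathrm{\mathbf{D}}(\mathrm{Hom}_{\Lambda}(\ddot{E},\Lambda))$, and then count composition factors; dually for $\mathrm{C}^-$.

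However, there is a genuine error in the numerical data you feed into this count, and it is load-bearing. You assign to $\ddot{Q}$ rank $n-1$ and corank $n$, and you explicitly single out $\mathrm{corank}\ \ddot{Q}=n$ as one of the two key inputs. This is false: by Proposition~\ref{P:rank} the functor preserves rank and corank, and $Q$ has rank $1$ and corank $1$ as an $R$-module (any non-zero ideal $B\subseteq Q$ gives a torsion quotient $Q/B$, so the only admissible choice is $B=0$); hence $\ddot{Q}$ has rank $1$ and corank $1$, i.e.\ it is the length-two module with socle $\ddot{R}$ and top $\ddot{K}$. The integer $n$ enters the computation only through $\mathrm{rank}\ \ddot{E}=n$, equivalently $\mathrm{rad}\ \ddot{E}\cong\ddot{R}^{\,n}$. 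The error is not harmless: from the presentation one gets $a_0-a_1=r-nc$ and $b_0-b_1=c$, so the pair $(\mathrm{rank},\mathrm{corank})$ of $\mathrm{C}^+\,M$ equals $(nc-r)\cdot(\mathrm{rank}\ \ddot{Q},\mathrm{corank}\ \ddot{Q})-c\cdot(0,1)$. With the correct value $(1,1)$ this is $\bigl(nc-r,\ (n-1)c-r\bigr)$, exactly as the proposition states; with your value it would be $\bigl(nc-r,\ (n^2-1)c-nr\bigr)$, which contradicts the formula you claim to derive. Your first sanity check ($M=\ddot{Q}$ of rank $n-1$ and corank $n$) rests on the same mistake. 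Once the data for $\ddot{Q}$ are corrected to rank $1$, corank $1$, your computation goes through and reproduces Lady's proof, which is what the paper invokes.
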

\begin{proof} See the proof of \cite[Proposition 5.7]{Lad77}. \end{proof}

\begin{theorem} The following assertions hold:
\begin{enumerate}

\item If $n=3$, then, up to quasi-homomorphism, the strongly indecomposable torsion-free $R$-modules are $R,\ Q,\ E,\ \widehat{R}$ and an  $R$-module with $\mathrm{rank}\ 2$ and $\mathrm{corank}\ 1$ (corresponding to $\mathrm{C}^+\ \ddot{Q}=\mathrm{C}^-\ \ddot{R}$).
\item If $n\geq 4$, there are strongly indecomposable torsion-free $R$-modules with arbitrarily large rank.
\end{enumerate} 
\end{theorem}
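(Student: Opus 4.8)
The plan is to transport the entire problem across the functor $Q\otimes_R\mathrm{Hom}_R(R\oplus E,\ )$ and solve it inside $\mathrm{mod}\text{-}\Lambda$. By Theorem~\ref{T:func} this functor is exact and fully faithful, and by Theorem~\ref{T:subcat} its image is exactly the $\Lambda$-modules with no summand isomorphic to $\ddot K$; hence the strongly indecomposable torsion-free $R$-modules correspond bijectively, up to quasi-isomorphism, to the indecomposable finitely generated $\Lambda$-modules other than $\ddot K$ itself. Since $\Lambda$ is hereditary Artinian with $(\mathrm{rad}\ \Lambda)^2=0$ and exactly two simples $\ddot R$ (projective) and $\ddot K$ (injective) by Proposition~\ref{P:hered}, I would run the Coxeter-functor machinery of Proposition~\ref{P:Coxeter}. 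Reading that proposition as a linear action on dimension vectors $(\mathrm{rank},\mathrm{corank})$, the functor $\mathrm{C}^+$ acts (on modules without projective summand) by the matrix
\[
\Phi=\begin{pmatrix}-1 & n\\ -1 & n-1\end{pmatrix},
\]
whose inverse is realized by $\mathrm{C}^-$; one checks $\det\Phi=1$ and $\mathrm{tr}\,\Phi=n-2$, so the behaviour of the indecomposables is governed entirely by the characteristic polynomial $\lambda^2-(n-2)\lambda+1$.

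For part (1) I take $n=3$. Then $\mathrm{tr}\,\Phi=1$, the eigenvalues are primitive sixth roots of unity and $\Phi$ has order $6$; thus $\Lambda$ is of finite representation type and every indecomposable is preprojective. I would therefore enumerate the indecomposables by applying $\mathrm{C}^-$ to the two indecomposable projectives $\ddot R=(1,0)$ and $\ddot E=(3,1)$ until the dimension vector leaves the positive cone (signalling that an injective has been reached). Using the formulas of Proposition~\ref{P:Coxeter} this yields the chains $(1,0)\to(2,1)\to(1,1)$ and $(3,1)\to(3,2)\to(0,1)$, i.e.\ exactly six indecomposables with dimension vectors $(1,0),(2,1),(1,1),(3,1),(3,2),(0,1)$. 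Discarding $\ddot K=(0,1)$, the remaining five must be identified: $(1,0)=\ddot R$ and $(1,1)=\ddot Q$ correspond to $R$ and $Q$ by Proposition~\ref{P:cover}, $(3,1)=\ddot E$ to $E$, $(2,1)=\mathrm{C}^-\ddot R=\mathrm{C}^+\ddot Q$ to the rank-$2$, corank-$1$ module, and the unique remaining indecomposable of rank $3$, namely $(3,2)=\mathrm{C}^-\ddot E$, to $\widehat R$; the last identification uses that $\widehat R$ is reduced, strongly indecomposable of rank $c_R(0)=3$ and, by Corollary~\ref{C:rank}, of corank $2$, so by uniqueness of the indecomposable with this dimension vector it must be $\mathrm{C}^-\ddot E$.

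For part (2) I take $n\geq 4$, so $\mathrm{tr}\,\Phi=n-2\geq 2$ and $\Phi$ has an eigenvalue $\geq 1$ and hence infinite order. I would start from the projective $\ddot E=(n,1)$ and iterate $\mathrm{C}^-$: since $\ddot E$ is indecomposable and non-injective, and the dimension vectors produced stay strictly positive with strictly increasing first coordinate, each iterate $(\mathrm{C}^-)^i\ddot E$ remains indecomposable (by the remark preceding Proposition~\ref{P:Coxeter}) and non-injective, so the process never stops. By Proposition~\ref{P:Coxeter} the ranks grow without bound (linearly when $n=4$, exponentially when $n\geq 5$). Each of these is an indecomposable $\Lambda$-module different from $\ddot K$, hence lies in the image of $\mathcal C$ by Theorem~\ref{T:subcat} and corresponds to a strongly indecomposable torsion-free $R$-module; since the functor preserves rank (Proposition~\ref{P:rank}), these modules have arbitrarily large rank.

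The main obstacle is the finite-representation-type input used silently in part (1): one must know that the finite order of the Coxeter transformation forces every indecomposable to be preprojective, so that the $\mathrm{C}^-$-orbits of the two projectives really exhaust all indecomposables and no regular modules are overlooked. This is where I would lean on the hereditary structure of $\Lambda$ from Proposition~\ref{P:hered} together with the Coxeter-functor argument of \cite{Lad77}. A secondary, more bookkeeping difficulty is pinning down the coranks of $\widehat R$ and of the length-two modules precisely enough (via Corollary~\ref{C:rank} and Proposition~\ref{P:cover}) to match each computed dimension vector with the correctly named $R$-module, and, in part (2), to certify that no injective summand ever appears so that $\mathrm{C}^-$ may be iterated indefinitely.
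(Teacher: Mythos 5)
Your proposal is correct and takes essentially the same route as the paper: the paper proves both parts exactly as Lady does in \cite[Theorem 5.11]{Lad77}, i.e.\ by the Coxeter-functor computation on dimension vectors supplied by Proposition~\ref{P:Coxeter}. The finite-representation-type input you flag as the main obstacle (that the finite order of the Coxeter transformation when $n=3$ forces every indecomposable to be preprojective, so the $\mathrm{C}^-$-orbits of the two projectives exhaust all indecomposables) is precisely what the paper supplies by citing \cite[Theorem 2]{Mul74}.
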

\begin{proof} 
We show $(1)$ and $(2)$ as Lady in the proof of \cite[Theorem 5.11]{Lad77} by using Proposition~\ref{P:Coxeter} and \cite[Theorem 2]{Mul74}. \end{proof}

\section{Some other results and  open questions.}

Let $G$ be a finite rank torsion-free module over an almost maximal valuation domain $R$. A \textbf{splitting field} for $G$ is a subfield $Q'$ of $\widehat{Q}$ containing $Q$ such that $(Q'\cap\widehat{R})\otimes_RG$ is a completely decomposable $(Q'\cap\widehat{R})$-module (i.e. a direct sum of rank one modules). If $Q'$ is a splitting field for $G$, $G$ is called $Q'$\textbf{-decomposable}. By \cite[Theorem 7]{Men83}, each finite rank torsion-free module $G$ has a unique minimal splitting field $Q'$ and $[Q':Q]<\infty$. So, Lady's results on splitting fields of torsion-free modules of finite rank over rank one discrete valuation domains can be extended to almost maximal valuation domains by replacing $\widehat{Q}$ by $Q'$ in the previous section and by taking $\mathcal{C}$ to be the category whose objects are the finite rank torsion-free $Q'$-decomposable modules.

Now, $R$ is a valuation domain which is not necessarily almost maximal. We say that an $R$-module $G$ is \textbf{strongly flat} if it is an extension of a free module by a divisible torsion-free module (see \cite{BaSa04}). By \cite[Lemma V.1.1 and Proposition V.1.2]{FuSa85} $\widetilde{Q}$ is a splitting field for each finite rank strongly flat module. So, each finite rank strongly flat module $G$ has a unique minimal splitting field $Q'\subseteq\widetilde{Q}$ and $[Q':Q]<\infty$. We also can extended Lady's results. In particular:
\begin{theorem}
Let $R$ be a valuation domain. We consider the following conditions:
\begin{enumerate}
\item $\exists l\geq 1$ such that each finite rank strongly flat module is a direct sum of modules of rank at most $l$;
\item $c_R(0)\leq 3$.
\end{enumerate} 
Then $(1)\Rightarrow (2)$. If $c_R(0)\leq 2$ then $(1)$ holds and $l=c_R(0)$.

\end{theorem}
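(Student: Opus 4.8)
The plan is to transpose the entire apparatus of Section~\ref{S:bound} from the almost maximal setting to the strongly flat setting over an arbitrary valuation domain, replacing the maximal immediate extension $\widehat{R}$ and its field $\widehat{Q}$ by the completion $\widetilde{R}$ and $\widetilde{Q}$, and replacing the category of finite rank torsion-free modules by the category $\mathcal{C}$ of finite rank strongly flat modules. Since $\widetilde{Q}$ is a splitting field for every finite rank strongly flat module, these are exactly the finite rank $\widetilde{Q}$-decomposable modules, so for strongly flat $G$ the module $\widetilde{R}\otimes_R G$ is a direct sum of rank-one $\widetilde{R}$-modules and plays the role that $\widehat{R}\otimes_R G$ played before. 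Setting $n=c_R(0)$, I would re-prove, with these substitutions, the strongly flat analogues of Lemma~\ref{L:quasi} through the last theorem of Section~\ref{S:bound}: the quasi-homomorphism criterion, the endomorphism embeddings of Proposition~\ref{P:Endo}, the structure Theorem~\ref{T:struc} and Corollary~\ref{C:rank}, the progenerator $R\oplus E$ with $E$ an indecomposable strongly flat module of rank $n$ and corank $1$, the equivalence with $\mathrm{mod-}\Lambda$, and the Coxeter functor analysis.

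Granting these analogues, I would prove $(1)\Rightarrow(2)$ by contraposition. First, if $c_R(0)=\infty$, then for every $m\geq 2$ the field $\widetilde{Q}$ contains units $\pi_2,\dots,\pi_m$ of $\widetilde{R}$ with $1,\pi_2,\dots,\pi_m$ linearly independent over $Q$, and the construction of \cite[Theorem XV.6.3]{FuSa01} then yields an indecomposable strongly flat module $E_m$ of rank $m$. Their ranks are unbounded, contradicting the bound $l$ of $(1)$, so $c_R(0)<\infty$. Next, if $c_R(0)=n\geq 4$, the strongly flat analogue of the last theorem of Section~\ref{S:bound}, part $(2)$, produces strongly indecomposable, hence indecomposable, strongly flat modules of arbitrarily large rank, again contradicting $(1)$. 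Therefore $c_R(0)\leq 3$, which is $(2)$.

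For the last assertion, assume $c_R(0)\leq 2$. If $c_R(0)=1$ then $\widetilde{Q}=Q$, so $Q$ is a splitting field and every finite rank strongly flat module is completely decomposable; hence $(1)$ holds with $l=1$. If $c_R(0)=2$, I would argue as in Theorem~\ref{T:n=2}: splitting off the divisible part (a $Q$-vector space, thus a sum of copies of $Q$), it suffices to treat a reduced indecomposable strongly flat module $G$ of rank $r$ and corank $c$ having no rank-one summand. The analogue of Corollary~\ref{C:rank} gives $r\leq nc=2c$ and $nc\leq(n-1)r=r$, whence $r=2c$; the analogue of Theorem~\ref{T:struc} embeds $G$ purely into $\widetilde{B}=\widetilde{R}\otimes_R B$, where $B$ is a direct sum of $c$ rank-one strongly flat modules, and $\mathrm{rank}\ \widetilde{B}=2c=r$ forces $G\cong\widetilde{B}$. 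Indecomposability then gives $c=1$ and $G\cong\widetilde{A}$ for a rank-one strongly flat module $A$. Thus every indecomposable finite rank strongly flat module has rank at most $2$, so $(1)$ holds with $l=2$; since $\widetilde{R}$ is itself such an indecomposable of rank $2$, the bound is attained and $l=c_R(0)$.

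The main obstacle is the transfer itself, namely checking that Lady's homological apparatus survives the passage from almost maximal rings and pure-injective hulls to arbitrary valuation domains and strongly flat modules. Concretely, one must verify that the finite rank strongly flat modules, with quasi-homomorphisms, form a category in which $R\oplus E$ is a progenerator, that $E$ and all its Coxeter transforms remain strongly flat of the predicted rank and corank, and that $\Lambda=Q\otimes_R\mathrm{End}_R(R\oplus E)$ is again hereditary Artinian with $(\mathrm{rad}\ \Lambda)^2=0$. The decisive point enabling this is that $\widetilde{R}\otimes_R G$, well-behaved precisely because $\widetilde{Q}$ is a splitting field for strongly flat $G$, can replace the pure-injective hull $\widehat{R}\otimes_R G$ throughout; once this substitution is justified, the proofs of Lemma~\ref{L:quasi} onward carry over without essential change.
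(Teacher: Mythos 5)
Your proposal follows the paper's own proof essentially step for step: the paper likewise proves $(1)\Rightarrow(2)$ by first establishing $c_R(0)<\infty$ via indecomposable strongly flat modules of unbounded rank built from units of $\widetilde{R}$ (it cites the proof of condition (a) of \cite[Theorem 3]{Vam90}, which is the same pull-back construction you invoke via \cite[Theorem XV.6.3]{FuSa01}), then transfers Lady's machinery of Section~\ref{S:bound} with $\widetilde{R},\widetilde{Q}$ in place of $\widehat{R},\widehat{Q}$ to exclude $c_R(0)\geq 4$, and settles the case $c_R(0)=2$ exactly as in Theorem~\ref{T:n=2}. Your write-up is in fact more detailed than the paper's, which disposes of all these steps in three lines by reference to V\'amos, Lady, and Theorem~\ref{T:n=2}.
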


If $c_R(0)=3$, it is possible that the proof of \cite[Theorem 2.6]{ArDu93} should be generalized. 

\begin{proof} $(1)\Rightarrow (2)$. We show that $c_R(0)\leq\infty$ in the same way that the condition (a) of \cite[Theorem 3]{Vam90} is proven. Then, as above, we use Lady's methods to get $c_R(0)\leq 3$.

If $c_R(0)=2$ we do as in the proof of Theorem~\ref{T:n=2}. \end{proof}

\bigskip
\textbf{Some open questions:}
\begin{enumerate}
\item Does henselian valuation domains with finite total defect, which are not strongly discrete, exist?
\item For a valuation domain $R$, does the condition $d_R(0)=3$ imply $fr(R)=3\ \mathrm{or}\ <\infty$? It is possible that the proof of \cite[Theorem 2.6]{ArDu93} should be generalized. 
\item For a valuation domain $R$ with finite total defect, does the condition  $c_R(L_i)=p,\ \forall i,\ 1\leq i\leq m$, where $p=2\ \mathrm{or}\ 3$ and $(L_i)_{1\leq i\leq m}$ is the family of prime ideals defined in Theorem~\ref{T:main}, imply $fr(R)<\infty$ for some  $m>1$? (If $m>1$, by \cite[Theorem 10]{Vam90}, $R$ is Henselian and $p$ is the characteristic of its residue field.)
\end{enumerate} 
 


\end{document}